\newtheorem{thm}{Theorem}[section]
\newtheorem{prop}[thm]{Proposition}
\newtheorem{lem}[thm]{Lemma}
\newtheorem{cor}[thm]{Corollary}
\theoremstyle{definition}
\theoremstyle{remark}
\newtheorem{rem}[thm]{Remark}
\numberwithin{equation}{section}
\newcommand{\ang}[1]{{\left\langle{#1}\right\rangle}}
\renewcommand{\Re}{\operatorname{Re}}
\newcommand{\pa}{{\partial}}
\newcommand{\ep}{{\varepsilon}}
\newcommand{\RR}{\mathbb{R}}
\newcommand{\NN}{\mathbb{N}}
\newcommand{\CC}{\mathbb{C}}
\newcommand{\br}[1]{\left(#1\right)}
\newcommand{\ol}{\overline}
\DeclareMathOperator{\WF}{WF}
\DeclareMathOperator{\Op}{Op}
\newcommand{\injto}{\hookrightarrow}
\DeclareMathOperator{\Id}{Id} 
\DeclareMathOperator{\Span}{Span}
\DeclareMathOperator{\End}{End} 
\newcommand{\SA}{\mathcal A}
\newcommand{\SC}{\mathcal C}
\newcommand{\SD}{\mathcal D}
\newcommand{\SE}{\mathcal E}
\newcommand{\SN}{\mathcal N}
\newcommand{\ST}{\mathcal T}
\newcommand{\SX}{\mathcal X}
\newcommand{\SY}{\mathcal Y}
\newcommand{\SCE}{\mathscr E}
\newcommand{\SCG}{\mathscr G}
\newcommand{\SCH}{\mathscr H}
\colorlet{linkcolour}{black}
\colorlet{urlcolour}{blue}
\newcommand{\pd}{\partial}
\newcommand{\spt}{\mathrm{spt}}
\newcommand{\cl}{\nabla^*\nabla }
\begin{document}
\title{The Lorentzian Calder\'{o}n problem on vector bundles}

\author{Se\'{a}n Gomes}
\address{Department of Mathematics and Statistics,
University of Helsinki,
P.O 68, 00014, University of Helsinki}
\email{sean.p.gomes@gmail.com}

\author{Lauri Oksanen}
\address{Department of Mathematics and Statistics,
University of Helsinki,
P.O 68, 00014, University of Helsinki}
\email{lauri.oksanen@helsinki.fi}

\maketitle

\begin{abstract}
	In this paper we study a Lorentzian version of the Calder\'{o}n problem, which is concerned with the determination of a connection and potential on a Hermitian vector bundle over a Lorentzian manifold from the Dirichlet-to-Neumann map of the associated connection wave operator. For a class of Lorentzian manifolds satisfying a curvature bound, including perturbations of Minkowski space over strictly convex domains, the connection and potential is shown to be uniquely determined up to the natural gauge transformations of the problem. The proof is based on ideas from the earlier works \cite{LO1},\cite{LO2} of the second author in the scalar setting.
\end{abstract}

\section{Introduction}
The purpose of this paper is to solve an inverse problem for the connection wave operator $P=\nabla^*\nabla+V$, acting on sections of a Hermitian vector bundle $E$ equipped with compatible connection $\nabla$ and Hermitian potential $V$, over a fixed Lorentzian manifold $(M,g)$. Boundary measurements, in the form of the Dirichlet-to-Neumann map associated to $P$, are shown to uniquely determine the connection $\nabla$ and the potential $V$ up to a natural gauge invariance in a recovery domain $\SD\subset M$ specified by the causal structure of $M$.
Our geometric setup is as follows.
\label{sec:intro}
Let
\begin{equation}
	\label{eq:productform}
	M=[-T,T]\times M_0
\end{equation}
be a $(1+n)$-dimensional Lorentzian manifold, where $T > 0$ and $M_0$ is compact and connected, with smooth boundary. We assume $M$ is equipped with the metric 
\begin{equation}\label{eq:metricform}
	g=c(t,x)(-dt^2 +g_0(t,x,dx)).
\end{equation}
were $g_0$ is a smooth family of Riemannian metrics parametrised by $t$ and the conformal factor $c(t,x)$ is smooth and positive.
Let $E$ be a Hermitian bundle of rank $N$ over $M$, equipped with a connection $\nabla$ that is compatible with the Hermitian structure, in the sense that
\begin{equation}\label{eq:conn.compat}
	X \ang{u,v}_E=\langle \nabla_X u,v\rangle_E+\ang{u,\nabla_X v}_E
\end{equation}
pointwise for $X\in\SC^\infty(M;TM)$ and $u,v\in\SC^\infty(M;E)$. For $X=\partial_t$, we use the notation $\nabla_t:=\nabla_{\partial_t}$.

Furthermore we assume that $E$ is equipped with the structure of a $G$-bundle for some Lie subgroup $G\subseteq U(\CC,N)$, that is a maximal collection of local trivialisations $\phi_\alpha:E|_{U_\alpha}\to U_\alpha\times \CC^N$ such that the transition maps satisfy $$\phi_\alpha^{-1}\circ\phi_\beta|_{U_\alpha\cap U_\beta}\in \SC^\infty(U_\alpha\cap U_\beta;(U_\alpha\cap U_\beta) \times G). $$
The maps $\phi_\alpha$ are said to be a \emph{$G$-charts} for $E$, together comprising a \emph{$G$-atlas} for $E$, and 
the pullbacks of the standard frames for the trivial bundles $U_\alpha\times \CC^N\to U_\alpha$ under $\phi_\alpha$ are said to be the  \emph{$G$-frames} for $E$. Sections of $\mathrm{End}(E)$ are said to be \emph{$G$-sections} if $(\phi^{-1})^* A\phi^*\in \SC^\infty(U;U\times G) $ for any $G$-chart $\phi:E|_U\to U\times \CC^N$. We denote the set of $G$-sections by $\SC^\infty(M;G(E))$.

We assume that $\nabla$ is compatible with the $G$-bundle structure, in the sense that parallel transport with respect to $\nabla$ preserves the collection of $G$-frames. Compatibility with the Hermitian structure corresponds to the case $G=U(\CC,N)$ with the $G$-bundle structure of $E$ furnished by the local trivialisations induced by arbitrary local orthonormal frames.

A choice of $G$-char $\phi:E|_U\to U\times \CC^N$ over $U\subseteq M$ allows us to write any compatible connection in the form
\begin{equation}
\label{eq:B.def.0}
\nabla=\phi^*(d+B)(\phi^{-1})^*\quad (B\in \SC^\infty(U;\mathfrak{g}\otimes T^*U))
\end{equation}
where $d$ denotes the componentwise exterior derivative and $\mathfrak{g}$ is the Lie algebra of $G$. For $G=U(\CC,N)$, we have $\mathfrak{g}=\mathfrak{u}(\CC,N)$, the Lie algebra of skew-Hermitian matrices.

Let $V\in \SC^\infty(M;\End(E))$ be Hermitian, in the sense that
\begin{equation}\label{eq:potential.compat}
	\ang{Vu,v}_E=\ang{u,Vv}_E
\end{equation}
pointwise for $u,v\in\SC^\infty(M;E)$. Such $V$ will be referred to as \emph{potentials}.

There exist natural conjugate symmetric, non-degenerate sesquilinear forms on $\SC_c^\infty(\mathrm{int}(M);E)$ and $ \SC_c^\infty(\mathrm{int}(M);E\otimes T^*M)$ given by
\begin{equation}\label{eq:L2.def}
	\br{u,v}_{E}:=\int_{M} \ang{u,v}_E \, dV_{g}(x)
\end{equation}
and 
\begin{equation}\label{eq:L2.def.forms}
	\br{u\otimes \alpha,v\otimes \beta}_{E\otimes T^* M}:=\int_{M} \ang{u,v}_E \ang{\alpha,\beta}_g \, dV_{g}(x)
\end{equation}
for $u,v\in \SC_c^\infty(\mathrm{int}(M);E)$ and $\alpha,\beta\in\SC_c^\infty(\mathrm{int}(M); T^*M)$, where $dV_g$ is the Riemannian volume density induced by $g$ and \eqref{eq:L2.def.forms} is extended by linearity.
We define a formal adjoint $\nabla^*$ by 
\begin{equation}\label{eq:duality}
	\br{\nabla u,\omega}_{E\otimes T^* M}=\br{u,\nabla^* \omega}_{E}.
\end{equation}
The \emph{connection wave operator} induced by $\nabla$ is then defined by 
\begin{equation}\label{eq:box.def.0}
	\Box=\nabla^*\nabla.
\end{equation} 
More generally, we consider operators of the form 
\begin{equation}\label{eq:box.def}
	P =\Box+V
\end{equation}
for potentials $V\in\SC^\infty(M;\End(E))$ satisfying \eqref{eq:potential.compat}.

The \emph{connection wave equation} on $M$ is then given by
\begin{align}
	P u&=0\\
	u&= f \textrm{ on $\Sigma=(-T,T)\times \pd M_0$}\label{eq:wave.eq}\\
	(u,\nabla_t u)&=0 \textrm{ on $\{-T\}\times M_0$}.
\end{align}
The \emph{Dirichlet-to-Neumann map} $\Lambda:H^1_0(\Sigma;E)\to L^2(\Sigma;E)$ for a connection $\nabla$ and a potential $V$ is defined by
by 
\begin{equation}\label{eq:DtN}
	\Lambda f:=\nabla_\nu u|_{\Sigma}
\end{equation}
where $\nu$ is the outward pointing unit normal vector field on $\Sigma$, and $u$ is the unique solution to \eqref{eq:wave.eq}, with well-posedness of \eqref{eq:wave.eq} and regularity of \eqref{eq:DtN} following from Proposition \ref{prop:forwardsmooth}.

The inverse problem we consider (henceforth referred to as the \emph{Lorentzian Calder\'{o}n problem on Hermitian vector bundles}) is that of obtaining injectivity of the map
 $$(\nabla,V)\mapsto \Lambda_{\nabla,V}$$ 
up to two natural obstructions.

The first obstruction is the presence of a gauge invariance for \eqref{eq:wave.eq}. In particular, if $A\in\mathcal{C}^\infty(M;G(E))$ with $A|_\Sigma=\mathrm{Id}$ then it follows immediately that $$\Lambda_{A^*\nabla A,A^*VA}=\Lambda_{\nabla,V}.$$
The second obstruction is that of finite speed of propagation for $P$. Solutions to \eqref{eq:wave.eq} vanish in the subset of $M$ consisting of points which are not in the causal future of any point of $\Sigma$, and so we cannot hope to obtain information about $\nabla$ or $V$ in this region from knowledge of $\Lambda$. 

Our main result is that under suitable geometric hypotheses (essentially those in \cite{LO2}), coinciding Dirichlet-to-Neumann maps for $(\nabla_j,V_j)$ with $j=1,2$ implies gauge equivalence of $(\nabla_j,V_j)$ in a suitable recovery domain $\SD\subset M$ which is specified using the causal structure of $M$. We shall state the hypotheses of the main theorem now, and recall the definition of the causal relation in Section \ref{sec:lorentzian.geom}.

We denote the causal future and past of $p\in M$ by 
$$J^{+}(p):=\{q\in M:q \geq p\}$$
$$J^{-}(p):=\{q\in M: q \leq p\}$$
and introduce the notation
\begin{equation}\label{eq:exterior.nullcone.def}
	\SE_p= M \setminus (J^-(p)\cup J^+(p))
\end{equation} 
for the exterior of the double null cone. We use $R$ to denote the Riemann curvature tensor and we make the following assumptions.
\begin{enumerate}
	\item[(H1)] For any point $p\in M$, any spacelike vector $v\in T_p M$, and any null vector $N\in T_p M$ with $g(v,N)=0$, we have 
	\begin{equation}\label{eq:curvature}
		g(R(N,v)v,N)\leq 0
	\end{equation}
	\item[(H2)] For any null geodesic $\gamma$ between two points $p$ and $q$, $\gamma$ is the unique causal path between $p$ and $q$. For any $p\in M$, $\exp_p$ is a diffeomorphism from the subset of spacelike vectors (in its maximal domain of definition) onto $\SE_p$.
	\item[(H3)] There exists $T_0\in (-T,T)$ and $p_0\in \mathrm{int}(M)$ such that $\SCE_{p_0}\cap \pd M\subset \Gamma=(-T,T_0)\times \pd M_0\subset \Sigma$.
	\begin{figure}[h]
	\begin{center}
	\begin{tikzpicture}[scale=0.9]
	\draw (-1,-2)--(1,-2);
	\draw (-1,2)--(1,2);
	\draw (0,-2) node[below] {$M_0$};
	\draw (1,-2)--(1,2);
	\draw (-1,-2)--(-1,2);
	\draw (1,-2) node[right] {$t=-T$};
	\draw (1,2) node[right] {$t=T$};
	\draw (1,0) node[right] {$\Sigma$};

	\draw[dashed] (-1,-1)--(1,-2);
	\draw[dashed] (-1,-2)--(1,-1);
	\filldraw (0,-1.5) circle [radius=1pt];
	\draw (0,-1.5) node[above] {$p_0$};
	\draw (1,-1) node[right] {$t=T_0$};
	\draw[very thick,red] (1,-2)--(1,-1);
	\draw[very thick,red] (-1,-2)--(-1,-1);
	\draw (1,-1.5) node[right] {$\color{red}\Gamma$};

	\draw[dashed,pattern color=magenta,pattern=north east lines] (0,-1.5)--(1,-2)--(1,-1)--cycle;
		\draw[dashed,pattern color=magenta,pattern=north east lines] (0,-1.5)--(-1,-2)--(-1,-1)--cycle;
	\draw[->] (-1.5,-1.5) -- (-1,-1.5);
	\draw[magenta] (-1.5,-1.5) node[left] {$\SE_{p_0}$};
	\draw[dotted] (-1,-1)--(1,-1);
\end{tikzpicture}
\end{center}
\caption{}
\label{fig1}
\end{figure}

	\item[(H4)] All null geodesics have at most finite order contact with $\pd M$. 
\end{enumerate}
The recovery domain $\SD$ is then given by
\begin{equation}
	\label{eq:D.defn}
	\SD:=\{p\in M:\ol{\SE_p}\cap \pa M \subset (T_0,T)\times \pa M_0\}
\end{equation}
and we make the final assumption
\begin{enumerate}
	\item[(H5)] There exists $T_1\in [-T,T]$, such that $\{T_1\}\times M_0\subseteq \mathcal{D}$.
\end{enumerate}
\begin{figure}[h]
\begin{center}
	\begin{tikzpicture}[scale=1]
	\draw (-1,-2)--(1,-2);
	\draw (-1,2)--(1,2);
	\draw (0,-2) node[below] {$M_0$};
	\draw (1,-2)--(1,2);
	\draw (-1,-2)--(-1,2);
	\draw (1,-2) node[right] {$t=-T$};
	\draw (1,2) node[right] {$t=T$};
	\draw (1,0) node[right] {$\Sigma$};

	\draw (1,-1) node[right] {$t=T_0$};
	\draw[very thick,red] (1,-2)--(1,-1);
	\draw[very thick,red] (-1,-2)--(-1,-1);
	\draw (1,-1.5) node[right] {$\color{red}\Gamma$};

	\draw[dotted] (-1,-1)--(1,0);
	\draw[dotted] (-1,0)--(1,-1);
	\draw[dotted] (-1,1)--(1,2);
	\draw[dotted] (-1,2)--(1,1);
	\draw[dotted,pattern color=blue,pattern=north east lines] (0,-0.5)--(1,0)--(1,1)--(0,1.5)--(-1,1)--(-1,0)--cycle;

		\draw[->] (-1.5,0.5) -- (-1,0.5);
	\draw[blue] (-1.5,0.5) node[left] {$\mathcal{D}$};

	\draw[thick,blue] (-1,0.75)--(1,0.75);
	\draw (1,0.75) node[right] {$t=T_1$};

	\draw[dotted] (-1,-1)--(1,-1);

\end{tikzpicture}
\end{center}
\caption{}
\label{fig2}
\end{figure}
The hypotheses (H1)-(H2) were used in \cite{LO2} to construct a strictly pseudoconvex foliation of $\SE_p$, which is the key geometric ingredient in obtaining the unique continuation principle Proposition \ref{prop:UCP}.
The hypotheses (H3)-(H4) are used to obtain the exact controllability result of Proposition \ref{prop:control} on timeslices in the causal future of $p_0$, via the geometric control condition as in \cite{BLR}. 
The hypothesis (H5) imposes two pleasant structural properties on $\SD$, namely that $\SD$ is connected, and $\SD\cap \Sigma \neq \emptyset$.

The main result of this paper is as follows.
\begin{thm}\label{thm:main}
	Let $(M,g)$ be a Lorentzian manifold of the form \eqref{eq:productform}, \eqref{eq:metricform} satisfying hypotheses (H1-H5). 
	Let $\tilde g$ be a smooth metric lying in a sufficiently small $\SC^2$ neighbourhood of $g$.	Let $E$ be a rank $N$ Hermitian bundle over $M$ with the structure of a $G$-bundle for $G\subseteq U(\mathbb{C}^N)$ a fixed Lie subgroup of the Lie group of unitary transformations on $\CC^N$.

	Let $\nabla_1,\nabla_2$ be two connections on $E$ of the form \eqref{eq:B.def.0}, and let $V_1,V_2$ be two potentials satisfying \eqref{eq:potential.compat}.
	Let $\Lambda_1,\Lambda_2$ be the Dirichlet-to-Neumann maps defined in \eqref{eq:DtN} corresponding to the connection and potential pairs $(\nabla_1,V_1)$ and $(\nabla_2,V_2)$ respectively.

	Then if
	\begin{equation}\label{eq:equality.DtN}
		\Lambda_1=\Lambda_2 \textrm{ on $H_0^1(\Sigma;E)$},
	\end{equation}
	there exists $A\in \SC^\infty(\SD;G(E))$ such that $A^*\nabla_1 A=\nabla_2$ and $A^*V_1 A=V_2$ in $\SD$.
	Moreover, $A|_{\SD\cap \Sigma}=\Id$.
\end{thm}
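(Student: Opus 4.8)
The plan is to run a boundary control argument in the spirit of \cite{LO2}, adapted to sections of $E$. Since the metric $\tilde g$ is fixed and known (its closeness to $g$ ensures that the geometric inputs of Propositions \ref{prop:UCP} and \ref{prop:control} persist), only the pair $(\nabla,V)$ must be recovered, and the obstruction is precisely the gauge group $\SC^\infty(M;G(E))$; in particular no manifold or metric determination is involved.

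First I would record the consequences of \eqref{eq:equality.DtN}. Both operators $P_j$ are formally self-adjoint for the pairing \eqref{eq:L2.def} by construction, so Green's identity applies; combining it with $\Lambda_1=\Lambda_2$ yields a Blagoveshchenskii-type identity expressing, purely in terms of the common data $\br{f,h,\Lambda f,\Lambda h}$, the inner products $\br{u_1^f(t),u_1^h(s)}_E$ of solutions of \eqref{eq:wave.eq} for $P_1$ with boundary data $f,h\in \SC_c^\infty(\Gamma;E)$, and likewise for $P_2$. Consequently, for every such $f,h$ the Cauchy data $\br{u_j^f(T_1),\nabla_t^j u_j^f(T_1)}$ on the slice $\{T_1\}\times M_0$ have matching energy inner products for $j=1,2$. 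Moreover, since $u_j^f|_\Sigma=f$ and $\nabla_\nu^j u_j^f|_\Sigma=\Lambda_j f$ agree, the Cauchy data of $u_1^f$ and $u_2^f$ coincide on $\Sigma$ itself.

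Next, by Proposition \ref{prop:control} the sets of Cauchy data $\{\br{u_j^f(T_1),\nabla_t^j u_j^f(T_1)}:f\in \SC_c^\infty(\Gamma;E)\}$ are dense in the natural finite-energy space $\SH$ over $\{T_1\}\times M_0$ — here (H5) places $\{T_1\}\times M_0\subset \SD$ in the causal future of $p_0$, where (H3)--(H4) make the geometric control condition hold for the control region $\Gamma$. Density together with the equality of energy inner products means the correspondence $u_1^f\mapsto u_2^f$ at time $T_1$ extends to a unitary isomorphism $U:\SH\to \SH$. I would then argue that $U$ is a pointwise gauge transformation: for relatively open $S\subseteq \partial M_0$, the closure of $\{u_j^f(T_1):\spt f\subseteq (-T,T_0)\times S\}$ equals the subspace of fields supported in the domain of influence of $S$ at time $T_1$ — combining localized controllability with the Tataru-type unique continuation of Proposition \ref{prop:UCP} — and these domains of influence depend only on the fixed metric $\tilde g$, hence are the same for both problems. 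Letting $S$ shrink to a point shows $U$ is local, so $U$ is multiplication by some $A\in L^\infty(\{T_1\}\times M_0;\mathrm{GL}(E))$; unitarity and compatibility of $\nabla_1,\nabla_2$ with the Hermitian and $G$-structures then upgrade $A$ to a smooth $G$-section intertwining the two connections and potentials to infinite order along $\{T_1\}\times M_0$.

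Finally I would propagate this gauge over all of $\SD$. For $p\in \SD$, hypothesis (H2) and the definition \eqref{eq:D.defn} provide a strictly pseudoconvex foliation of $\SE_p$ meeting $\partial M$ only inside $(T_0,T)\times \partial M_0\subset\Sigma$; feeding this into Proposition \ref{prop:UCP} propagates the agreement of the connection curvatures and potentials (in the gauge $A$) from the slice into a full neighbourhood of $p$, after which $A$ is defined near $p$ as the solution of the linear transport system $\nabla^2_X A=\nabla^1_X A$ along paths, path-independence being guaranteed by the matching of curvatures, so that one obtains $A\in \SC^\infty(\SD;G(E))$ on the connected (by (H5)) domain $\SD$ with $A^*\nabla_1 A=\nabla_2$ and $A^*V_1A=V_2$. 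The normalization $A|_{\SD\cap\Sigma}=\Id$ follows from the agreement of Cauchy data on $\Sigma$ noted above, equivalently from a boundary determination argument using geometric optics solutions localized near $\SD\cap\Sigma$ and $\Lambda_1=\Lambda_2$. I expect the main obstacle to be the passage from the abstract unitary intertwiner $U$ to a genuine $G$-valued pointwise gauge: this is where the bundle structure must be handled with care, since the Hermitian metric, the $G$-reduction, and the intertwining of the connection Laplacians must be tracked simultaneously, with a secondary difficulty in patching the locally defined gauges into one global $G$-section on $\SD$ respecting the boundary normalization.
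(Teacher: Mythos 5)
Your overall strategy is the boundary-control philosophy that the paper also follows, but two of your key steps rely on tools that are precisely the ones unavailable in this setting, and this is where the proposal breaks down. First, your localisation of the intertwiner $U$ --- identifying the closure of the reachable set from $(-T,T_0)\times S$ with the fields supported in the domain of influence of $S$, then shrinking $S$ to a point --- requires a Tataru-type unique continuation theorem from arbitrary open subsets of the lateral boundary. Proposition \ref{prop:UCP} is not of this type: it gives unique continuation only into the null-cone exteriors $\SE_p$, and this restriction is the entire point of hypotheses (H1)--(H2), since Tataru's theorem is unavailable for smooth, genuinely time-dependent coefficients. The paper's substitute is Proposition \ref{prop:gauge.equality}: one uses Proposition \ref{prop:control} to drive $u^{(1)}$ to the point-mass Cauchy data $(0,\delta_j)$ at $p=(T_1,x_0)$, observes that the corresponding $u^{(2)}$ vanishes on $\SE_p$ by the available UCP, and concludes from the Sobolev regularity of the trace that it is again a combination of the $\delta_j$; locality of the correspondence then falls out of a single Green's identity rather than a shrinking-neighbourhood argument.

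Second, the passage from the slice $\{T_1\}\times M_0$ to all of $\SD$ is not justified as written. Proposition \ref{prop:UCP} is a statement about solutions of $Pu=0$, not about coefficients, so it cannot ``propagate the agreement of the curvatures and potentials'' off the slice; and constructing $A$ by a transport system whose path-independence is ``guaranteed by the matching of curvatures'' presupposes the gauge-equivalence you are trying to prove. The paper avoids both issues by running the point-mass argument at \emph{every} $p\in\mathrm{int}(\SD)$ (each interior point lies on a controllable timeslice), so that $u^{(1)}_f=Au^{(2)}_f$ holds throughout $\mathrm{int}(\SD)$, and then feeding Gaussian beam solutions (Lemma \ref{lem:goptics}) with arbitrary lightlike phase covectors into the identity $P_1(Au^{(2)}_f)=0$; Lemma \ref{lem:basic.lorentzian.end} then yields the first-order equation \eqref{eq:derivative.1} for $A$, from which the smoothness of $A$, its $G$-valuedness (via the $\mathfrak g$-valued ODE along curves with initial value $\Id$ on $\SD\cap\Sigma$, using connectedness of $\SD$), and the intertwining of the $\nabla_j$ and $V_j$ are all read off. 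Your proposal makes no essential use of Gaussian beams, but they are indispensable here: without them neither the smoothness of $A$ nor the recovery of the connection coefficients and the potential can be extracted from the pointwise relation $u^{(1)}_f=Au^{(2)}_f$.
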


\subsection{Examples}
\label{sec:examples}
The hypotheses (H1)-(H4) of Theorem \ref{thm:main} are equivalent to those in \cite{LO2}, which are strictly weaker than those in \cite{LO1}. As such, Theorem \ref{thm:main} applies to Hermitian vector bundles over the Lorentzian manifolds discussed in these works, with the additional hypothesis (H5) only constraining the size of the domain of recovery $\SD$ (see Figure \ref{fig2}). In particular, the following geometries are treated.
\begin{itemize}
	\item $M=[-T,T]\times \Omega$, where $\Omega \subset \RR^n$ is compact and connected, with nonempty interior and a smooth strictly convex boundary, equipped with the metric $\tilde{g}$ that is any sufficiently small $\SC^2$-perturbation of the Minkowski metric on $M$ \cite[Corollary~1.2]{LO2}.
	\item $M=[-T,T]\times M_0$, equipped with the metric $\tilde{g}$ that is any sufficiently small $\SC^2$-perturbation the ultrastatic metric $g(t,x)=-dt^2+g_0(x)$, where $(M_0,g_0)$ is a compact, simply connected Riemannian manifold with negative sectional curvature and smooth strictly convex boundary \cite[Section~3.1]{LO1}.
\end{itemize}

\subsection{Relation to existing literature}
We first give a brief account of the literature in the elliptic and scalar version of Theorem \ref{thm:main}, where $E$ is replaced by the trivial bundle $\pi:M\times \CC\to M$, the Lorentzian manifold $M$ is replaced by a compact, connected Riemannian manifold with boundary, and the connection wave operator $P=\Box+V$ is replaced by $\Delta+V$ where $\Delta$ is the Laplace-Beltrami operator and $V\in\SC^\infty(M)$ is a real potential.

There, the analogous question is to ask if for fixed metric $g$, the Dirichlet-to-Neumann map $\Lambda$ uniquely determines $V$. 
This is a geometric version of the classical Calder\'{o}n problem \cite{calderonoriginal}, where Alberto Calder\'{o}n raised the question of whether one can determine the electrical conductivity of a medium by making voltage and current measurements at the boundary. 

For $M$ a Euclidean domain, the case $\dim(M)\geq 3$ was solved affirmatively for Euclidean domains in \cite{sylvester.uhlmann} and the case $\dim{M}=2$ was treated in \cite{nachman}, see also \cite{bukhgeim}.
For arbitrary Riemannian manifolds and $V\in \SC^\infty(M)$, the problem remains wide open, however the case of real analytic $(M,g)$ and $V$ was solved in \cite{lee.uhlmann}.

Let us now turn to the scalar hyperbolic setting, where $(M,g)$ is Lorentzian of the form \eqref{eq:productform},\eqref{eq:metricform} and $P=\Box+V$. The Boundary Control (BC) method introduced by Belishev \cite{belishevBC} shows unique determination of $V$ from the Dirichlet-to-Neumann map $\Lambda$ under a variety of assumptions on $g$ and $V$. The ultrastatic case with $c=1$ and $g_0$ independent of $t$ was resolved in \cite{belishev.kurylev}, and the case of analytic $g$ and $V$ was resolved in \cite{eskin1},\cite{eskin2}.

These works all make use of the BC method, and consequently use the optimal unique continuation principal (UCP) of Tataru \cite{tataru}. As this optimal unique continuation principal breaks down for $\SC^\infty$ coefficients \cite{alinhac}, results are more scarce in this setting.

Some results for general $V\in \SC^\infty(M)$ include \cite{stefanov} which treat the case of general smooth $V$ for Minkowski spacetime, \cite{feizmohammadi} which treats ultrastatic spacetime under additional convexity assumptions on $M_0$, and \cite{spyros.lauri.miika} which treats the case of stationary spacetimes. These results relate the Lorentzian Calder\'{o}n problem to the injectivity of the light ray transform $\mathcal{L} V(\gamma)$ that maps inextendable null geodesics $\gamma$ to the integral of $f$ over $\gamma$. Such injectivity results are rare outside of the ultrastatic case.

In \cite{LO1},\cite{LO2}, unique determination of general $V\in \SC^\infty(M)$ was shown even for non-analytic $g$, provided that $(M,g)$ satisfies certain curvature bounds. Moreover, it was shown that the set of $g$ satisfying the curvature bounds had nonempty interior in $\SC^2$. The key novelty in \cite{LO2} was the observation that the geometric hypotheses (H1-H2) of Theorem \ref{thm:main} imply the existence of a strictly  pseudoconvex foliation in exterior nullcones $\SE_p$ by Lorentzian spheres centered at $p$. Such a foliation implies a unique continuation theorem for $P$ by classical theory (see for example \cite[Theorem~28]{Hormander7}), without the need for the stronger curvature assumptions in the earlier work \cite{LO1}. Using this UCP, together with ideas from the BC method, injectivity of $V\mapsto \Lambda_V$ was shown for the same broad class of metrics $\tilde g$ as in Section \ref{sec:examples}.

For the connection Laplacian on a vector bundle, the problem of recovering the coefficients of the connection (or the topology and geometry of the underlying bundle) up to gauge transformations from the Dirichlet-to-Neumann map has also been studied in many recent works which we shall now mention.

In the elliptic setting, the problem of recovering the coefficients of the connection and potential in $2$d up to gauge was completely solved in \cite{albin2013inverse}. Various partial results for the higher dimensional problem have been obtained. In \cite{eskin2001global}, the case of Euclidean domains is treated. The case of line bundles was studied in \cite{dos2009limiting},\cite{cekic2017calderon}, under the assumption that the base manifold was conformally transversally anisotropic. These works made additional geometric assumptions on $M$ and an assumption on the injectivity of the geodesic ray transform respectively.
Recovery of the connection and the geometry of the bundle was shown for Yang-Mills connections on Hermitian bundles of rank $m>1$ in \cite{cekic2020calderon}. Unlike the previous works, the analysis of the light ray transform does not play an essential role in this last result.
Under the assumption of analyticity of base manifold and coefficients, \cite{gabdurakhmanov2025calderon} recovers both the connection and the geometry of the bundle and base manifold in dimension $n > 2$. In dimension $2$, the bundle and connection are recovered for a fixed base manifold.

In the hyperbolic setting, the case of trivial vector bundles over Euclidean domains was resolved in \cite{eskin2005inverse}, and time-dependent Yang--Mills potentials were recovered in the followup work \cite{eskin2008inverse}.
The paper \cite{kurylev2018inverse} recovers the connection and geometry of the bundle and base manifold from only partial measurements, however all coefficients are assumed to be time-independent. 
Let us also mention the recent work \cite{spyros.lauri.miika} in which a matrix-valued potential is recovered from the source-to-solution map on stationary Lorentzian spacetimes under the assumption of a time-independent connection. This work proceeds by analysis of the light ray transform which seems difficult in the case of general time dependent coefficients.
Summing up, most existing results in both the elliptic and hyperbolic setting either require an assumption of analyticity, or strong geometric assumptions on the base manifold and vector bundle.

The present work generalises the investigations of \cite{LO1},\cite{LO2} to the setting of the connection wave operator \eqref{eq:box.def} acting on sections of a Hermitian vector bundle. The techniques used are similar to those in the scalar setting, however an essential difference in the higher rank case is the presence of a gauge invariance for the problem, as seen in the conclusion of Theorem \ref{thm:main}. 

\subsection{Organisation of the paper}
In Section \ref{sec:prelim}, we recall some prerequisite notions from Lorentzian geometry and study the form that $P$ takes in local trivialisations of $E$.
In Section \ref{sec:ucp}, we obtain a unique continuation principle Proposition \ref{prop:UCP} for solutions to \eqref{eq:wave.eq} based on classical Carleman estimate techniques. This relies on the existence of a strictly pseudoconvex foliation of the sets $\SE_p$ defined in \eqref{eq:exterior.nullcone.def}.
In Section \ref{sec:obscont}, we show well-posedness of the direct problem \eqref{eq:wave.eq}, and obtain an exact controllability result Proposition \ref{prop:control} for solutions to \eqref{eq:wave.eq}.
In Section \ref{sec:mainthm}, we complete the proof of Theorem \ref{thm:main}, making use of the the unique continuation principle and exact controllability result.
In Section \ref{sec:propagation}, we show that the classical propagation of singularities results for second-order differential operators on manifolds with non-characteristic boundary extend to the vector bundle setting.
In Section \ref{sec:observability}, we prove an observability estimate that played a key role in the proof of exact controllability in Section \ref{sec:obscont}. This result is based on the work \cite{BLR} in the setting of scalar wave operators in the presence of a geometric control condition.
In Section \ref{sec:goptics}, we construct Gaussian beam solutions for the connection wave operator that are concentrated along a null geodesic.
In Section \ref{sec:direct}, we include the proofs of energy estimates that were required for the study of the direct problem in Section \ref{sec:obscont}.

\subsection*{Acknowledgements}

The authors were supported by the European Research Council
of the European Union, grant 101086697 (LoCal),
and the Research Council of Finland, grants 347715,
353096 (Centre of Excellence of Inverse Modelling and Imaging)
and 359182 (Flagship of Advanced Mathematics for Sensing Imaging and Modelling).
Views and opinions expressed are those of the authors only and do not
necessarily reflect those of the European Union or the other funding
organizations.

\section{Preliminaries}
\label{sec:prelim}
\subsection{Lorentzian geometry}
\label{sec:lorentzian.geom}
We begin by recalling some elementary notions from Lorentzian geometry. 
Let $(M,g)$ be as in \eqref{eq:productform} and \eqref{eq:metricform}. For $p\in M$, we say $v\in T_pM$ is
\begin{itemize}
	\item \emph{spacelike} if $g(v,v)>0$;
	\item \emph{timelike} if $g(v,v)<0$;
	\item \emph{lightlike} if $g(v,v)=0$\textrm{ and }$v\neq 0$;
	\item \emph{causal} if $g(v,v)\leq 0$\textrm{ and }$v\neq 0$;
	\item \emph{future-pointing} if $g(v,\partial_t)>0$.
\end{itemize}
We shall use the same terminology to classify $v\in T_p^*M$ using the musical isomorphism to identify vectors with covectors.

We say a curve $\gamma\in \SC([a,b], M)$ is \emph{piecewise smooth future-pointing} if it is piecewise $\SC^\infty$, at every regular point we have that $\gamma'(t)$ is causal and future-pointing, and moreover that $g(\gamma'(t_+),\gamma'(t_-))<0$ at singular $t$, where $$\gamma'(t_{\pm}):=\lim_{h\to 0^{\pm}} \gamma'(t+h)\in T_{\gamma(t)}M.$$

If there exists a piecewise smooth future-pointing curve from $p$ to $q$ we write $p < q$ and say that $q$ is in the causal future of $p$. If such a curve exists that is timelike at regular points we write $p \ll q$ and say that $q$ is in the chronological future of $p$.
We denote by $\leq$ the minimal reflexive extension of $<$.

For $p\in M$, any covector in $T_p^*M$ is completely determined by its inner products with elements of $L_p^+$, the cone of future-pointing lightlike covectors.
\begin{lem}
	\label{lem:basic.lorentzian}
	For $\xi\in T_p^*M$, if $\ang{\xi,\eta}_{T_p^*M}=0$ for all $\eta\in L_p^+ M$, then $\xi=0$. 
\end{lem}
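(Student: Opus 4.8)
The plan is to deduce the statement from the purely linear-algebraic fact that the future null cone $L_p^+ M$ spans $T_p^* M$ over $\RR$. Once this is known the conclusion is immediate: if $\ang{\xi,\eta}_{T_p^* M} = 0$ for every $\eta \in L_p^+ M$, then the linear functional $\ang{\xi,\cdot}_{T_p^* M}$ vanishes on a spanning subset of $T_p^* M$, hence on all of $T_p^* M$, and the nondegeneracy of the dual metric on $T_p^* M$ forces $\xi = 0$.

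To establish the spanning property I would proceed in two steps. First, every nonzero lightlike covector $\eta$ satisfies $\eta(\partial_t) \ne 0$: if $\eta(\partial_t) = 0$ then $\eta$ is orthogonal, with respect to the dual metric, to the covector $(\partial_t)^\flat$, which is timelike since $\ang{(\partial_t)^\flat,(\partial_t)^\flat}_{T_p^* M} = g(\partial_t,\partial_t) < 0$; but the orthogonal complement of a timelike covector is a positive-definite hyperplane and so contains no nonzero lightlike covectors, a contradiction. Since a covector $\eta$ is future-pointing precisely when $\eta(\partial_t) > 0$, it follows that for each nonzero lightlike $\eta$ exactly one of $\eta$, $-\eta$ lies in $L_p^+ M$; hence $\operatorname{span}_\RR L_p^+ M$ coincides with the span of the full null cone of $T_p^* M$. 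Second, choosing a basis $e^0, e^1, \dots, e^n$ of $T_p^* M$ that is orthonormal for the dual metric, with $\ang{e^0,e^0}_{T_p^* M} = -1$ and $\ang{e^i,e^i}_{T_p^* M} = 1$ for $1 \le i \le n$, each covector $e^0 \pm e^i$ is lightlike, and the identities
\[
	e^0 = \tfrac12\big((e^0 + e^1) + (e^0 - e^1)\big), \qquad e^i = \tfrac12\big((e^0 + e^i) - (e^0 - e^i)\big)
\]
show that the $e^0 \pm e^i$ span $T_p^* M$; here one uses $\dim M = 1 + n \ge 2$. Combining the two steps gives $\operatorname{span}_\RR L_p^+ M = T_p^* M$.

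I do not anticipate a genuine obstacle: the argument is elementary linear algebra together with the definitions of the causal structure. The only points needing a little care are the sign and orientation conventions defining ``future-pointing'' for covectors through the musical isomorphism — which is what lets one identify $\eta(\partial_t)$ with $g(\eta^\sharp,\partial_t)$ and with $\ang{\eta,(\partial_t)^\flat}_{T_p^* M}$ — the standard fact that the orthogonal complement of a timelike covector is positive definite, and the harmless use of $\dim M \ge 2$.
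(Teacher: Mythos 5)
Your proof is correct and follows essentially the same route as the paper, which simply asserts that $L_p^+M$ spans $T_p^*M$ and invokes the nondegeneracy of $g$; you have merely supplied the standard linear-algebra details (the null covectors $e^0\pm e^i$ and the observation that exactly one of $\pm\eta$ is future-pointing) that the paper leaves implicit.
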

\begin{proof}
	This is an immediate consequence of $L_p^+M$ being a spanning set for $T_p^* M$ and the nondegeneracy of $g$.
\end{proof}
For $U\subseteq M$ open and $u=v\otimes\alpha\otimes \beta\in \SC^\infty(M;E^{m,n}\otimes T^*M\otimes T^*M)$, where
\begin{equation}\label{eq:tensor.power}
E^{m,n}:=E^{\otimes m}\otimes (E^*)^{\otimes n}	,
\end{equation}
we denote the metric contraction of $\alpha\otimes \beta$ by
\begin{equation}
	C(u)=\langle\alpha,\beta\rangle_g v \in \SC^\infty(M;E^{m,n}).
\end{equation}
Similarly to Lemma \ref{lem:basic.lorentzian}, we can determine endomorphism-valued $1$-forms using suitable metric contractions as in the following lemma. 
\begin{lem}
	\label{lem:basic.lorentzian.end}
	Let $(E_1,\ldots,E_N)$ be a local frame for $E$ near $p$. Then for $A\in \End(E_p)\otimes T_p^* M$, if
	$C(Au)=0$ for all $u=\sum_{j=1}^N \xi^j \otimes E_j$ with each $\xi^j\in L_p^+ M$, then $A=0$.
\end{lem}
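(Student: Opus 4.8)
The plan is to reduce the claim to Lemma~\ref{lem:basic.lorentzian} by expressing everything in the frame $(E_1,\dots,E_N)$. Viewing $A\in\End(E_p)\otimes T_p^*M$ as a linear map $E_p\to E_p\otimes T_p^*M$, there are (complex-valued) covectors $A^i_j$ with $A(E_j)=\sum_i E_i\otimes A^i_j$. For $u=\sum_j\xi^j\otimes E_j$ with each $\xi^j\in L_p^+M$, applying $A$ and then the metric contraction $C$ of the two cotangent factors gives
\[ C(Au)=\sum_{i,j}\ang{A^i_j,\xi^j}_g\,E_i. \]
Since $(E_i)$ is a frame, the hypothesis that $C(Au)$ vanishes for all admissible $u$ is equivalent to saying that for each $i$ and every tuple $(\xi^1,\dots,\xi^N)\in(L_p^+M)^N$ one has $\sum_j\ang{A^i_j,\xi^j}_g=0$.

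It then remains to peel off the individual covectors $A^i_j$. Fix $i$ and an index $j_0$, fix a reference $\eta_*\in L_p^+M$, and evaluate the identity above at the tuple with $\xi^j=\eta_*$ for $j\neq j_0$ and $\xi^{j_0}=\eta$ for arbitrary $\eta\in L_p^+M$; subtracting the value obtained at $\eta=\eta_*$ yields $\ang{A^i_{j_0},\eta}_g=\ang{A^i_{j_0},\eta_*}_g$ for all $\eta\in L_p^+M$. Taking $\eta=2\eta_*\in L_p^+M$ forces $\ang{A^i_{j_0},\eta_*}_g=0$, hence $\ang{A^i_{j_0},\eta}_g=0$ for every $\eta\in L_p^+M$. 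Applying Lemma~\ref{lem:basic.lorentzian} to the real and imaginary parts of $A^i_{j_0}$ gives $A^i_{j_0}=0$, and since $i$ and $j_0$ were arbitrary, $A=0$.

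The contraction computation and the final reduction are routine bookkeeping. The one point that needs a little care is that $0\notin L_p^+M$, so a single summand $\ang{A^i_j,\xi^j}_g$ cannot be isolated simply by zeroing out the other covectors; the finite-difference step, together with the invariance of the cone $L_p^+M$ under positive scaling, is what compensates for this, after which the essential content is exactly the linear-algebra fact behind Lemma~\ref{lem:basic.lorentzian} (that $L_p^+M$ spans $T_p^*M$). I do not anticipate any genuine obstacle.
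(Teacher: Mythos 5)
Your proof is correct and follows essentially the same route as the paper's: expand $A$ in the frame, compute the contraction, and reduce componentwise to Lemma~\ref{lem:basic.lorentzian}. If anything you are slightly more careful than the paper, which isolates a single $\xi^{j_0}$ by ``taking all but one $\xi^j$ equal to zero'' even though $0\notin L_p^+M$; your finite-difference and positive-scaling step (or, equivalently, multilinearity of the expression in the tuple $(\xi^1,\dots,\xi^N)$) closes that small gap.
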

\begin{proof}
	Writing $x_0=t$ for notational convenience, we let $A=\sum_{i=0}^n\sum_{j,k=1}^N A_{ijk}\, E_k \otimes E_j^*\otimes dx^i$. We then have
	\begin{equation}
		Au=\sum_{i,j=0}^n\sum_{k=1}^N A_{ijk}\, E_k\otimes dx^i\otimes \xi^j.
	\end{equation}
	Hence taking all but one $\xi^j$ equal to zero, the condition $C(Au)=0$ implies 
	\begin{equation}\label{eq:end.reduction}
		\ang{\sum_{i=0}^n A_{ijk}\, dx^i,\xi}_{T_p^* M}=0
	\end{equation} 
	for all $\xi \in L_p^+M$ and all $j,k$ and so we conclude $A=0$ by applying Lemma \ref{lem:basic.lorentzian}. 
\end{proof}

\subsection{Local computations}
\label{sec:local.comp}
We now study the form $P$ takes locally in terms of a local trivialisation of $E$ by an orthonormal frame, and record several identities that shall be used in what follows.

Let $(M,g)$, $E$ and $\nabla$ be as in Section \ref{sec:intro}. We restrict our attention to an open subset $U$ with local coordinates $(x_j)_{j=0}^n$ and corresponding coordinate frame $(e_j)_{j=0}^n$, where we once again have written $x_0=t$.

Let $(E_j)_{j=1}^N$ be a local $G$-frame for $E$ above $U$, giving rise to a $G$-frame $\phi:E|_U\to U\times \CC^N$.
As in \eqref{eq:B.def.0}, the connection is of the form
$$\nabla=\phi^*(d+B)(\phi^{-1})^* $$
for some $B=B_i\, dx^i$ where $B_i\in \mathcal{C}^\infty(U;\mathfrak{g})$.
In our local trivialisation the connection wave operator $\Box$ then takes the form
\begin{align*}
	(\phi^{-1})^*\Box\phi^*&=(d+B)^*(d+B)\\
	&=d^*d+d^*B+B^*d+B^*B.
\end{align*}
For $\omega\in\SC^\infty(U;\CC^N\otimes T^*U)$, we compute $B^*\omega=-C(B\omega)$, and so for $u\in\SC^\infty(U;\CC^N)$ we have
\begin{equation}
	\label{eq:top.two.terms}
	(\phi^{-1})^*\Box\phi^*u=d^*d-2C(B(du))+Zu
\end{equation}
for some smooth matrix-valued function $Z$ (that depends on $B$).

We extend the connection $\nabla$ to the dual bundle $E^*$ in the natural way, taking
\begin{equation}\label{eq:def.connection.dual}
(\nabla\mu,u)=d( \mu,u)-( \mu,\nabla u)
\end{equation}
as an equality of $1$-forms, for $\mu\in \SC^\infty(M;E^*)$ and $u\in \SC^\infty(M;E)$, where the pairing is the natural bilinear pairing between fibres of $E^*$ and $E$.
In terms of the local trivialisation $\tilde\phi:E^*|_U\to U\times \CC^N$ induced by the dual frame $(E_j^*)_{j=0}^N$ we have \begin{equation}\label{eq:coefficients.dual}
	\nabla=\left(\tilde\phi\right)^*(d+\overline B)\left({\tilde\phi}^{-1}\right)^*
\end{equation} where $\overline B$ denotes the entrywise conjugate of $B$.
We can then extend $\nabla$ to the tensor powers \eqref{eq:tensor.power} using the Leibniz rule to get a map
\begin{equation}\label{eq:tensor.connection}
\nabla: E^{m,n} \to E^{m,n} \otimes T^*M.
\end{equation}
An important special case is $E^{1,1}\cong \End(E)$. If $A\in \SC^\infty(\End(E))$ and $\tilde{A}\in\SC^\infty(U;\CC^N)$ is defined by 
\[\tilde A =(\phi^{-1})^*A \phi^*\]
then we have 
\begin{equation}\label{eq:covariant.A}
	\nabla A=\phi^*(d+[B,\tilde A])(\phi^{-1})^*.
\end{equation}

Equipping fibres and smooth sections of $E^{m,n}$ and $E^{m,n}\otimes T^*M$ with their natural bilinear forms, we can compute the adjoint $\nabla^*$, which is a connection version of $-\mathrm{div}$ and in local coordinates takes the form
\begin{equation}\label{eq:tensor.adjoint}
	\sum_{j=0}^n\nabla^*(T_j\, dx^j)=-\sum_{i,j=0}^nG^{-1}\nabla_i(g^{ij}GT_j)
\end{equation}
where $G=|\det g|^{1/2}$ is the Lorentzian volume density, $\nabla_i:=\nabla_{\pa_{x_i}}$, and $T_i\in\SC^\infty(M;E^{m,n})$.
From \eqref{eq:tensor.adjoint}, it follows that 
\begin{equation}
	\label{eq:tensor.laplacian}
	\nabla^*\nabla T = -\sum_{i,j=0}^n G^{-1}\pa_i (g^{ij}G) \nabla_j T -C(\nabla\nabla T)=-\sum_{i,j=0}^nG^{-1}\nabla_i(g^{ij}G \nabla_j T)
\end{equation}
and 
\begin{equation}\label{eq:products}
\nabla^*\nabla (T\otimes S)=\nabla^*\nabla T \otimes S + T \otimes \nabla^*\nabla S -2 C(\nabla T\otimes \nabla S)
\end{equation}
for $T,S\in\SC^\infty(M;E^{m,n})$. As contraction of a fixed factor $E\otimes E^*$ commutes with $\nabla$ and $\nabla^*\nabla$, it follows that for $A\in \SC^\infty(M;E^{1,1})\cong \SC^\infty(M;\End(E))$ and $u\in\SC^\infty(M;E)$, we have
\begin{equation}
\label{eq:products2}
\cl(Au)=(\cl A) u+A\cl u-2C(\nabla A \nabla u)
\end{equation}
and consequently
\begin{equation}\label{eq:products3}
	P(Au)=(P A) u+AP u-AVu-2C(\nabla A \nabla u).
\end{equation}

Finally, we note that the general Stokes theorem implies an analogue of the divergence theorem
\begin{equation}\label{eq:div.thm}
	(\nabla^*\omega,u)_E=(\omega,\nabla u)_{E\otimes T^*M}-(\mathrm{sgn}(\ang{\nu,\nu})\iota_\nu^* \omega,u)_{E,\pa M}
\end{equation}  
for $u\in \SC^\infty(M;E)$ and $\omega\in\SC^\infty(M;E\otimes T^*M)$
where $\nu$ is the outward pointing unit normal vector field on $\pa M$, and $(\cdot,\cdot)_{E,\pa M}$ is defined as in \eqref{eq:L2.def}, but with $M$ replaced by $\pa M$, and the volume density $dV_g$ replaced by its pullback under the inclusion $i:\pa M\injto M$.
From \eqref{eq:div.thm}, we obtain
\begin{equation}
	\label{eq:P.duality}
	(Pu,v)_E-(u,Pv)_E=-((\mathrm{sgn}\ang{\nu,\nu}\nabla_\nu u,v)_{E,\pa M}-(u,\mathrm{sgn}\ang{\nu,\nu} \nabla_\nu v)_{E,\pa M})
\end{equation}
for $u,v\in \SC^\infty(M;E)$.




\section{Unique continuation principle}
\label{sec:ucp}
In this section we use classical Carleman estimate techniques to deduce a unique continuation principle for $P=\Box+A$ with $A\in \mathrm{Diff}^1(M;E)$ arbitrary, where $\mathrm{Diff}^k(M;E)$ denotes the set of differential operators of order $k$ acting on sections of $E$ with coefficients in $\SC^\infty(M)$. The same strictly pseudoconvex foliation of $\SE_p$ constructed in \cite[Section~3]{LO2} can be used without modification.
\begin{prop}
\label{prop:UCP}
Let $(M,g)$ be of the form \eqref{eq:productform},\eqref{eq:metricform}, and let $E$ be a rank $N$ Hermitian vector bundle over $M$ equipped with a compatible connection $\nabla$. Suppose further that $g$ satisfies hypotheses (H1)-(H2) of Theorem \ref{thm:main} and let $\tilde{g}$ be a smooth Lorentzian metric on $M$ that lies in a sufficiently small $\SC^2(M)$-neighbourhood of $g$.
Let $$P=\Box+L\in \mathrm{Diff}^2(M;E)$$ where $L\in \mathrm{Diff}^1(M;E)$ is arbitrary and $\Box$ is the connection wave operator associated to $(\tilde g,\nabla)$.
Let $p\in \mathrm{int}(M)$ be such that $\SE_p\cap \partial M\subset \Sigma$, where $\SE_p$ is defined by \eqref{eq:exterior.nullcone.def} in the manifold $(M,\tilde{g})$ and $\Sigma=(-T,T)\times \partial M_0$.
Let $u\in H^{-s}(M;E)$ for some $s\geq 0$ be a distributional solution to
\begin{equation}
Pu=0\quad \textrm{on $\SE_p$}\label{eq:unique.continuation}.
\end{equation}
Suppose the traces $u|_\Sigma,\nabla_\nu u|_\Sigma$ both vanish on $\Sigma \cap \SE_p$. Then $u=0$ on $\SE_p$.
\end{prop}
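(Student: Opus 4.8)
The plan is to reduce the statement to a classical scalar unique continuation result across strictly pseudoconvex hypersurfaces, applied componentwise in a local $G$-frame, and then to propagate the vanishing of $u$ through the strictly pseudoconvex foliation of $\SE_p$ constructed in \cite[Section~3]{LO2}; since hypotheses (H1)-(H2) are $\SC^2$-open they hold for $\tilde g$, so no new geometry is required.

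\emph{Localization and the scalar Carleman estimate.} Over any coordinate patch carrying a $G$-frame $\phi$, formula \eqref{eq:top.two.terms} shows that $(\phi^{-1})^*\Box\,\phi^*$ equals $d^*d$ — the scalar wave operator $\Box_{\tilde g}$ of $\tilde g$ acting componentwise on $\CC^N$-valued functions — plus a first-order operator with smooth matrix coefficients, and adding $L\in\mathrm{Diff}^1(M;E)$ preserves this structure. So in the frame $P$ is an $N\times N$ system whose principal part is $\Box_{\tilde g}\otimes\Id$, with arbitrary smooth first-order coupling. For a hypersurface $\{\psi=r\}$ strictly pseudoconvex with respect to $\Box_{\tilde g}$, the classical theory (see for instance \cite[Theorem~28]{Hormander7}) furnishes a Carleman estimate
\[
\tau^{3}\|e^{\tau\psi}v\|_{L^2}^{2}+\tau\|e^{\tau\psi}\nabla v\|_{L^2}^{2}\le C\,\|e^{\tau\psi}\Box_{\tilde g}v\|_{L^2}^{2}
\]
for scalar $v$ supported near a point of the leaf and $\tau\gg 1$. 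Applying this to $\chi u_j$ for a suitable cutoff $\chi$, expressing $\Box_{\tilde g}u_j$ as $(Pu)_j$ (which vanishes) minus first-order expressions in all components of $u$ (where $L$ and the matrix part of $\Box$ enter), and summing over $j$, the perturbation terms are dominated on $\mathrm{supp}\,\chi$ by $C\sum_j(\|e^{\tau\psi}\nabla u_j\|_{L^2}^2+\|e^{\tau\psi}u_j\|_{L^2}^2)$ and hence absorbed into the left-hand side once $\tau$ is large; the commutators generated by $\chi$ are supported where $u$ already vanishes, as usual. This gives local unique continuation for $P$ across $\{\psi=r\}$, propagating the vanishing from the pseudoconvex side. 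The hypothesis $u\in H^{-s}$ is handled by the standard Friedrichs mollification inside the Carleman inequality, exactly as in the scalar case of \cite{LO2}.

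\emph{The foliation and the boundary.} By \cite[Section~3]{LO2} there is a smooth function $\psi$ on $\SE_p$, a suitable modification of the Lorentzian distance to $p$, whose level sets $\{\psi=r\}$, $0<r<r_{\max}$, foliate $\SE_p$ by Lorentzian spheres centered at $p$, each strictly pseudoconvex for $\Box_{\tilde g}$ from the side $\{\psi>r\}$; the leaves degenerate onto the double null cone of $p$ as $r\to0$ and sweep out a collar of $\ol{\SE_p}\cap\partial M$ as $r\to r_{\max}$. The hypothesis $\SE_p\cap\partial M\subset\Sigma$ guarantees that these outermost leaves meet $\partial M$ only within $\Sigma$, which is timelike — hence noncharacteristic for $\Box_{\tilde g}$ — for $\tilde g$ close to $g$ in $\SC^2$. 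Since $u|_\Sigma=0$ and $\nabla_\nu u|_\Sigma=0$ on $\Sigma\cap\SE_p$, the full Cauchy data of $u$ vanish there, so its extension by zero across $\Sigma$ into a slightly enlarged manifold $\hat M$ solves $Pu=0$ near each such point; extending $\psi$ a little past $r_{\max}$ into $\hat M$, the (extended) $u$ vanishes trivially on the leaves with parameter above $r_{\max}$.

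\emph{Conclusion, and the main obstacle.} Set $I:=\{r>0:u=0\text{ on }\{\psi>r\}\}$ (computed in $\hat M$). Then $I$ is an up-set since $\{\psi>r'\}\subset\{\psi>r\}$ for $r'>r$; it is nonempty by the previous paragraph; it is closed because $\{\psi>r\}=\bigcup_{r'>r}\{\psi>r'\}$; and it is open, because if $r_0\in I$ then the local unique continuation of the second paragraph, applied at the relatively compact leaf $\{\psi=r_0\}$ (covered by finitely many neighbourhoods, the pseudoconvexity constants being bounded below along it by continuity), propagates the vanishing of $u$ across $\{\psi=r_0\}$ to give $u=0$ on $\{\psi>r_0-\varepsilon\}$ for some $\varepsilon>0$. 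A nonempty clopen subset of the connected range of $\psi$ being everything, $u=0$ on $\{\psi>0\}$, that is, $u=0$ on $\SE_p$ by (H2). The only genuinely delicate points are the interface with $\Sigma$ — that every leaf which meets $\partial M$ meets it inside $\Sigma$ (precisely the hypothesis $\SE_p\cap\partial M\subset\Sigma$), and that vanishing Cauchy data legitimately license the extension by zero — together with the $H^{-s}$ bookkeeping; the hard geometric content, the pseudoconvex foliation, is imported unchanged from \cite{LO2}, and going from the scalar wave operator to the bundle system is routine precisely because the principal part stays scalar and diagonal.
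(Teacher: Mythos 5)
Your overall strategy --- componentwise scalar Carleman estimates across the strictly pseudoconvex foliation of $\SE_p$ imported from \cite{LO2}, combined with extension of $u$ by zero across $\Sigma$ using the vanishing Cauchy data --- is exactly the route the paper takes, and the continuity/clopen argument along the foliation parameter is a correct packaging of the paper's ``vanishing on $\{\psi>r\}$ implies vanishing near each point of $\psi^{-1}(r)$'' step. However, there is one genuine gap: your treatment of the hypothesis $u\in H^{-s}(M;E)$. You dismiss it with ``standard Friedrichs mollification inside the Carleman inequality,'' but Friedrichs' lemma controls commutators $[a\partial,J_\ep]u$ in $L^2$ only when $u$ itself is (at least) in $L^2$; for a distribution of order $-s$ with $s>0$ the mollified commutator terms do not converge in the $L^2$-based norms appearing in the Carleman estimate, and the estimate itself is only valid for $w\in H^1_{\mathrm{comp}}$. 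This is not bookkeeping: one must first upgrade $u$ to $\SC^\infty$ on a neighbourhood of $\SE_p$ before any Carleman argument can be run. The paper does this by extending $u$ by zero to $\tilde\SE_p\supset\SE_p$ in an enlarged manifold and invoking propagation of singularities for systems (\cite[Theorem~2.1]{MR82i:35172}), which works precisely because hypothesis (H2) guarantees that every null bicharacteristic through a point of $T^*\SE_p$ reaches the region $\tilde\SE_p\setminus M$, where the extended $u$ vanishes identically, while remaining inside $\tilde\SE_p$. This regularity upgrade is a substantive use of the geometry that your proposal omits entirely.

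Two smaller points, neither fatal. First, strict pseudoconvexity of the \emph{level sets} of $\psi$ does not by itself yield a Carleman estimate with weight $e^{\tau\psi}$; one must convexify, replacing $\psi$ by a second-order Taylor approximation minus $\ep|x|^2$ and using the weight $e^{\tau e^{\lambda\psi_\ep}}$ with $\lambda$ large, as in \cite[Theorem~28.3.4]{Hormander7} and in the paper's proof. Citing ``the classical theory'' papers over this, but the weight you wrote down is not the one that works. Second, your absorption of the first-order coupling terms and the localisation by a cutoff supported where $u$ already vanishes on one side are correct and match the paper once smoothness of $u$ is in hand.
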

\begin{rem}
	This is a straightforward generalisation of \cite[Theorem~1.3]{LO2} to the present setting of the connection wave operator acting on sections of a Hermitian vector bundle.
\end{rem}

\begin{proof}
We begin by embedding $M_0$ within a closed manifold $\tilde M_0$, and extending $\tilde{g},E,L$ and the connection $\nabla$ smoothly to $\tilde{M}:=[-T,T]\times \tilde M_0$. 

We now extend $u$ by zero to the distribution $u\in H^{-s}(\tilde{M};E)$ where $\tilde\SE_p$ is an open neighbourhood of $\SE_p$ in $\tilde M$. This extension remains a solution for the extended operator as $u|_{\Sigma\cap \SE_p}=\nabla_\nu u|_{\Sigma\cap \SE_p}=0$.

An application of propagation of singularities shows that $u\in \SC^\infty(\tilde \SE_p;E)$. Indeed, $u$ vanishes in $\tilde\SE_p\setminus\SE_p$, and for $q\in T^*\SE_p\setminus 0$, either $q\in\mathrm{ell}(P)$ or $q\in\mathrm{char}(P)$, where $\mathrm{char}(P)\subseteq T^*\tilde M$ is defined as in \cite[Definition~18.1.25]{Hormander3} and $\mathrm{ell}(P)=(T^*\tilde M\setminus 0)\setminus \mathrm{char}(P)$.

We have $\mathrm{ell}(P)\subseteq \WF(u)^c$ by microlocal ellipticity, and for $q\in\mathrm{char}(P)$, (H2) imples (see \cite[Lemma~5.1]{LO1}) that a segment of the null bicharacteristic through $q$ with either initial or terminal point $q$ meets $\tilde\SE_p\setminus M$ whilst remainining inside $\tilde\SE_p$. Hence, propapagation of singularities \cite[Theorem~2.1]{MR82i:35172} implies $q\in \WF(u)^c$, and we conclude that $u\in \SC^\infty(\tilde \SE_p;E)$.

We make use of the same strictly pseudoconvex foliation of $\SE_p$ as used in \cite[Proposition~3.5]{LO2}. The foliation is by level sets of $\psi=r_p$, the Lorentzian distance function associated to the point $p$ and the metric $\tilde g$.
It suffices to prove that for any $r>0$ and any $q\in \SE_p\cap \psi^{-1}(r)$, the vanishing of $u$ in $\{\psi > r\}\cap \tilde\SE_p$ implies the vanishing of $u$ in a neighbourhood of $q$.

In a small neighbourhood in $N\subseteq\tilde \SE_p$ of $q$, we now fix local coordinates $x=(x_0,\ldots,x_n)\in X\subset \RR^{n+1}$ for $M$ and a local trivialisation $\phi:E|_N\to X\times \CC^N$ of $E$ induced by a unitary frame. We may assume that $q$ is located at the origin in this coordinate chart, and by using these local coordinates and adjusting by an additive constant, we redefine $\psi$ to be a smooth function $X\to \RR$. 

For sections $v\in \SC^\infty(N;E|_N)$, we denote the $i$-th component of $(\phi^{-1})^* v$ by $v_i\in \SC^\infty(X;X\times \CC^N)$.

We write
\begin{equation}
	\psi_\epsilon= \sum_{|\alpha|\leq 2}x^\alpha \partial^\alpha \psi(0)/\alpha!-\epsilon|x|^2
\end{equation}
and choose $\epsilon,\delta >0$ sufficiently small so that in a small neighbourhood $X_\epsilon\subseteq X$ of $q$, we have strict pseudoconvexity of the level sets $\psi_\epsilon^{-1}(r)$ and
$\psi_\epsilon \leq \psi-\delta$ on $\partial X_\epsilon$.

As in the proof of \cite[Theorem~28.3.4]{Hormander7}, we have the following Carleman estimate in the set $Y=\{x\in X_\epsilon:\psi_\epsilon(x) > -\delta\}$ with weight $\phi=e^{\lambda \psi_\epsilon}$. 
\begin{equation}
\label{eq:carleman}
	\sum_{|\alpha|< 2} \tau^{2(2-|\alpha|)-1}\int_Y |D^\alpha w|^2 e^{2\tau \phi} \, dx\leq K(1+C/\tau^{1/2})\int_Y |\Box_{\tilde{g}} w|^2 e^{2\tau \phi}\, dx
\end{equation}
for $\tau>1$ and $w\in H_\mathrm{comp}^1(Y;Y\times \CC^N)$ with $\lambda $ sufficiently large.
Since $u\in \SC^\infty(\tilde \SE_p;E)$ is a solution to \eqref{eq:unique.continuation}, it follows from \eqref{eq:top.two.terms} that 
\begin{equation}
|(Pu)_i|\leq C\sum_{|\alpha|<2 }\sum_{j=1}^N |D^\alpha u_j|.
\end{equation}
for $1\leq i \leq N$. Take $\chi\in\SC_c^\infty(Y,\RR)$ with $0\leq \chi\leq 1$ and $\chi=1$ on the set $W=\{x\in X_\epsilon:\psi_\ep(x) \geq -\delta/2\}$ which has compact intersection with $\spt(u)$.
Then for $v=\chi u$ we have
\begin{equation}
	|(Pv)_i|\leq C\sum_{|\alpha|<2 }\sum_{j=1}^N |D^\alpha u_j|
\end{equation}
in $W$ for $1\leq i \leq N$ and so it follows from \eqref{eq:top.two.terms} that
\begin{equation}
	\|e^{\tau \phi}\Box_{\tilde g}v_i\|_{L^2(Y)}^2 \leq C'\sum_{|\alpha|<2 }\sum_{j=1}^N \|e^{\tau\phi }D^\alpha v_j\|^2_{L^2(W)}+C''e^{-\tau \delta }
\end{equation}
for $1\leq i \leq N$.
Applying \eqref{eq:carleman} to $w=(\phi^{-1})^*v$, we obtain
\begin{align}\label{eq:ucp.carleman}
\tau\sum_{|\alpha|<2 }\sum_{i=1}^N \|e^{\tau\phi}D^\alpha v_i\|^2_{L^2(W)} &\leq C \sum_{j=1}^N\|e^{\tau\phi}\Box_{\tilde g} v_j\|_{L^2(V)}^2 \\
&\leq C'\sum_{|\alpha|<2 }\sum_{j=1}^N \|e^{\tau\phi }D^\alpha v_j\|^2_{L^2(W)}+C''e^{-\tau \delta }.
\end{align}
Absorbing the first right-hand side term into the left-hand side and taking $\tau\to\infty$, we deduce that $v$ (and hence $u$) vanish on $W$ as required.
\end{proof}

\section{Direct problem and exact controllability}
\label{sec:obscont}
In this section we demonstrate well-posedness of the direct problem, and prove the exact controllability result Proposition \ref{prop:control} that plays a key role in the proof of Theorem \ref{thm:main}. 

First, we establish well-posedness of the direct problem in non-negative order Sobolev scales.
We introduce the data space
\begin{equation}
	\label{eq:data.space.def}
	\SX_s:= H_0^{s}(M;E)\times H_0^{s+1}(\Sigma)\times (H_0^{s+1}(M_0;E)\times H_0^s(M_0;E))
\end{equation}
and the corresponding solution space
\begin{equation}
	\label{eq:energy.space.def}
	\SY_s:=H^{s+1}(M;E)\cap C^1([-T,T];H^s(M_0;E))\cap C([-T,T];H^{s+1}(M_0;E))
\end{equation}
\begin{prop}
	\label{prop:forwardsmooth}
Let $(M,g)$ be of the form \eqref{eq:productform},\eqref{eq:metricform}, and let $E$ be a rank $N$ Hermitian vector bundle over $M$ equipped with a compatible connection $\nabla$. Let $(F,f,\phi)\in \SX_s$ for some $s\geq 0$.
Let $$P=\Box+L\in \mathrm{Diff}^2(M;E)$$ where $\Box$ is the connection wave operator associated to $(g,\nabla)$ and $L\in \mathrm{Diff}^1(M;E)$ is arbitrary.
Then there exists a unique solution 
\begin{equation}\label{eq:energy.space}
	u=\mathcal{S}(F,f,\phi)\in \SY_s
\end{equation}
to the problem
\begin{align}
	P u&=F\\
	u&= f \textrm{ on $\Sigma=(-T,T)\times \pd M_0$}\label{eq:wave.eq.2}\\
	(u,\nabla_t u)&=\phi \textrm{ on $\{T\}\times M_0$}
\end{align}
and the map $\mathcal{S}:\SX_s\to\SY_s$ is continuous. Moreover, $\nabla_\nu u|_{\Sigma}\in H^s(\Sigma;E)$.
\end{prop}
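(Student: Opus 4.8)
The plan is to reduce the connection wave equation to a scalar-type hyperbolic system on a trivial bundle by working in local trivialisations, and then invoke the classical well-posedness theory for second-order hyperbolic equations with non-characteristic (timelike) lateral boundary. First I would homogenise the boundary and initial data: choose an extension $\Phi \in \SY_s$ of the triple $(f,\phi)$, i.e. a section with $\Phi|_\Sigma = f$ and $(\Phi,\nabla_t\Phi)|_{\{T\}\times M_0}=\phi$, depending continuously on $(f,\phi)$ in the appropriate norms — this is a routine trace-extension argument since $\partial M_0$ and $\{T\}\times M_0$ meet transversally and the compatibility built into $\SX_s$ (the $H_0$ spaces) guarantees the corner conditions hold. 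Replacing $u$ by $u-\Phi$, it suffices to solve $Pu = \tilde F := F - P\Phi \in H^s(M;E)$ with homogeneous Dirichlet boundary data on $\Sigma$ and homogeneous Cauchy data on $\{T\}\times M_0$. Note here time runs \emph{backward} from $t=T$; this is the time-reversed version of \eqref{eq:wave.eq}, and is handled identically by reversing the time orientation.

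Next I would set up the energy method on $(M,g)$ in the product form \eqref{eq:metricform}. Using the local expression \eqref{eq:tensor.laplacian} for $\nabla^*\nabla$ together with \eqref{eq:top.two.terms}, one sees that $P = \Box + L$ has principal part $g^{ij}\partial_i\partial_j$ tensored with the identity on fibres, so the problem decouples at top order into $N$ scalar wave equations coupled through first-order terms. The conformal factor $c(t,x)$ and the structure $g = c(-dt^2 + g_0)$ make $\{T\}\times M_0$ spacelike and $\Sigma$ timelike, so the standard existence theory (e.g. via Galerkin approximation in the spatial variable, or via the results of Lions--Magenes / Hörmander for hyperbolic IBVPs) yields a unique solution $u \in C^1([-T,T];L^2(M_0;E)) \cap C([-T,T];H^1_0(M_0;E))$ at the base energy level $s=0$, with the energy estimate $\|u\|_{\SY_0} \lesssim \|\tilde F\|_{L^2(M;E)}$. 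I would carry out the $s=0$ estimate explicitly by multiplying $Pu$ by $\nabla_t u$ (in the Hermitian pairing), integrating over $M_0$, and using the compatibility \eqref{eq:conn.compat} of $\nabla$ with the Hermitian structure so that the connection terms integrate by parts cleanly; the boundary term on $\partial M_0$ vanishes because $u|_\Sigma = 0$ forces $\nabla_t u|_\Sigma = 0$. Grönwall in $t$ then closes the estimate. The regularity statement $u \in \SY_s$ for $s>0$ follows by differentiating the equation in tangential directions and commuting $P$ past tangential derivatives (elliptic regularity in the spatial variable recovers the normal derivatives, since $\partial M_0$ is non-characteristic for the spatial Laplacian), again using compatibility of $\nabla$ to control commutator terms.

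The remaining claim, $\nabla_\nu u|_\Sigma \in H^s(\Sigma;E)$, is the hidden-regularity (sharp trace) statement: the normal derivative of a finite-energy solution is more regular on the lateral boundary than Sobolev trace theory predicts. I would prove it by the standard multiplier argument: choose a vector field $X$ on $M$ that equals $\nu$ near $\Sigma$ and is tangential to $\{T\}\times M_0$, pair $Pu$ with $\nabla_X u$ (or a first-order operator with symbol $X$), integrate by parts over $M$, and isolate the boundary term $\int_\Sigma |\nabla_\nu u|^2 \, dV$; all other terms are controlled by $\|u\|_{\SY_0}$ and $\|\tilde F\|_{L^2}$. For general $s$ one commutes with tangential derivatives on $\Sigma$ first. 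I expect the \textbf{main obstacle} to be bookkeeping at the corner $\{T\}\times\partial M_0$: ensuring the extension $\Phi$ and all the integration-by-parts identities are consistent there, and that the compatibility conditions encoded in the $H_0$-data spaces are exactly what is needed to avoid spurious corner singularities. This is why the reference to Section \ref{sec:direct} for the detailed energy estimates is appropriate — the argument is classical in the scalar case, and the only genuinely new feature here is carrying the Hermitian/$G$-bundle structure through, which \eqref{eq:conn.compat} and \eqref{eq:tensor.laplacian} make essentially mechanical.
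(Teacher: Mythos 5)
Your overall toolkit --- multiplier/energy estimates with $\nabla_t u$ and $\nabla_X u$, commuting with tangential derivatives, Gr\"onwall --- is exactly what the paper deploys in Section \ref{sec:direct}, and your identification of the hidden-regularity mechanism for $\nabla_\nu u|_\Sigma$ is correct. However, the opening reduction is a genuine gap: homogenising the data by subtracting a trace extension $\Phi\in\SY_s$ loses derivatives in a way that makes the sharp regularity of the proposition unreachable. If $\Phi\in H^{s+1}(M;E)$ then $P\Phi\in H^{s-1}(M;E)$, not $H^s$ as you claim; even the optimal extension of $f\in H_0^{s+1}(\Sigma)$ lies only in $H^{s+3/2}(M;E)$, giving $P\Phi\in H^{s-1/2}$, and the homogeneous-boundary problem with that source yields $u-\Phi$ only in $H^{s+1/2}$, half a derivative short of $\SY_s$. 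The assertion that $f\in H_0^{s+1}(\Sigma)$ produces $u\in H^{s+1}(M;E)$ with $\nabla_\nu u|_\Sigma\in H^s(\Sigma;E)$ is precisely the Lasiecka--Lions--Triggiani sharp regularity, and it cannot be recovered from a naive homogenisation; this is why the energy estimate \eqref{eq:energy.main} is proved for the \emph{inhomogeneous} Dirichlet problem, controlling $E_k(t)+\|Xu\|^2_{L^2([0,t]\times\pa M_0;E)}$ directly by the boundary data through the tangential norms $B_k$, with no extension step.

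The paper's route also differs on the Cauchy data: rather than extending $\phi$ into $M$, it embeds $M_0$ in a closed manifold $\tilde M_0$, solves the Cauchy problem there, uses finite speed of propagation to see that the solution stays inside $M_0$ near $t=T$, and cuts off in time; the correction source $[P,\chi]v$ is then smooth and compactly supported in the interior, so no regularity is lost, and what remains is a boundary value problem with zero Cauchy data handled by the H\"ormander Chapter~24 machinery together with \eqref{eq:energy.main}. Finally, the statement covers all real $s\ge 0$: after establishing integer regularity by density and the energy estimate, the paper passes to fractional $s$ by complex interpolation, a step your tangential-commutator induction does not supply.
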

\begin{proof}
	The well-posedness of \eqref{eq:wave.eq.2} can be shown using standard techniques for treating hyperbolic boundary value problems, as in \cite[Chapter~24]{Hormander3}, with the energy estimate \eqref{eq:energy.main}, proven in Section \ref{sec:direct} below, taking the place of (24.1.4) in \cite{Hormander3}. The proof of local uniqueness and existence goes through essentially without modification in our bundle setting, working in local trivialisations induced by a choice of local orthonormal frame. The global existence and uniqueness with solution $u\in H^{s+1}(M;E)$ is then obtained as in \cite[Theorem~24.1.1]{Hormander3} in the case $\phi=0$. This also establishes the uniqueness claim in the proposition.
	
	To treat nonzero Cauchy data $\phi\in H_0^{s+1}(M_0;E)\times H_0^s(M_0;E)$, we first assume that $\phi\in \SC_c^\infty(\mathrm{int}(M_0))\times \SC_c^\infty(\mathrm{int}(M_0))$ and embed $M_0$ within a closed manifold $\tilde M_0$, extending $g_0,A,E$ and the connection $\nabla$ smoothly to $[-T,T]\times \tilde M_0$. We add a $\tilde \,$ to the notation for each of these extended objects.
	Existence of a solution for the Cauchy problem for $\tilde P=\Box_{\tilde \nabla,\tilde g}$ in the closed manifold $[-T,T]\times\tilde M_0$ with initial data $\phi$ then follows from \eqref{eq:energy.main} as in \cite[Theorem~23.2.4]{Hormander3}.	In particular, we can find a solution $v \in \SC^\infty([-T,T]\times \tilde M_0;E)$ to 
	\begin{align}
	P v&=0\\
	(v,\nabla_t v)&=\phi \textrm{ on $\{T\}\times \tilde M_0$}.
\end{align}
	By finite speed of propagation, we have $$\spt(v)\cap [T-\delta,T]\times \tilde M_0\subset [T-\delta,T]\times M_0$$ for sufficiently small $\delta >0$.
	Taking $\chi\in \SC_c^\infty(\RR)$ equal to $1$ in $[T-\delta/2,T+\delta/2]$, and supported in $(T-\delta,T+\delta)$, we can then identify $\chi v$ with an element $u_1\in \SC^\infty(M;E)$ that solves 
	\begin{align}
	P u_1&=F_1\\
	u_1&= 0 \textrm{ on $\Sigma=(-T,T)\times \pd M_0$}\\
	(u_1,\nabla_t u_1)&=\phi \textrm{ on $\{T\}\times \tilde M_0$}
\end{align}
	for $F_1=P(\chi v)=[P,\chi]v\in \SC_c^\infty(\mathrm{int}(M);E)$.
	The unique solution $u$ to \eqref{eq:wave.eq.2} is then obtained by taking the unique solution $u_2\in H^{s+1}(M;E)$ to the boundary value problem with zero Cauchy data
	\begin{align}
	P u_2&=F-F_1\\
	u_2&= f \textrm{ on $\Sigma=(-T,T)\times \pd M_0$}\\
	(u_2,\nabla_t u_2)&=0 \textrm{ on $\{T\}\times M_0$}
\end{align}
and setting $u=u_1+u_2$.

The preceding argument shows that for $F,f$ satisfying the hypotheses of the proposition and $\phi\in \SC_c^\infty(\mathrm{int}(M_0);E)$, there is a unique solution in $H^{s+1}(M;E)$ to \eqref{eq:wave.eq.2}. If $F$ and $f$ are smooth, then we additionally have the energy estimate \eqref{eq:energy.main}. 

Now let $(F,f,\phi)\in\SX_k$ for some $k\in \mathbb{N}$. Taking a sequence 
$$(F_j,f_j,\phi_j)\in  \SC^\infty(M;E)\times \SC_c^\infty(\Sigma;E)\times\SC_c^\infty(\mathrm{int}(M_0);E)^2$$
with 
$$(F_j,f_j,\phi_j)\to (F,f,\phi)\textrm{ in } \SX_k $$
we obtain from \eqref{eq:energy.main} that the corresponding solutions $u_j$ to \eqref{eq:wave.eq.2} (with $(F,f,\phi)$ replaced by $(F_j,f_j,\phi_j)$) are Cauchy in $\SY_k$. Hence their limit $u$ solves \eqref{eq:wave.eq.2}, is the unique such solution, and lies in $\SY_k$ with continuous dependence on data. We denote the solution operators by $\mathcal{S}_k:\SX_k\to\SY_k$. 

For any $k\in\NN$, the restriction of $\mathcal{S}_k$ to $\SX_s$ for $s\in [k,k+1]$ is then seen to be a continuous map $\mathcal{S}_s:\SX_s\to\SY_s$ by complex interpolation \cite{stein.interp}. This completes the proof.
\end{proof}
By time reversibility, \eqref{eq:wave.eq.2} is still well-posed if we replace the third condition with $$(u,\nabla_t u)=\phi\textrm{ on }\{T_0\}\times M_0$$
for any fixed $T_0\in [-T,T]$, provided that $$f\in H^{s+1}_0([-T,T_0]\times \pa M_0)+H^{s+1}_0([T_0,T]\times \pa M_0) .$$
We use the notation $$\mathcal{S}(F,f,\phi;T_0) $$
to denote the solution to this problem. 
\begin{rem}
	\label{rem:general.source}
	The above proof of well-posedness goes through without modification for  slightly more general source terms $F\in H_{\Sigma,T_0}^s(M;E)$, where $H_{\Sigma,T_0}^s(M;E)$ denotes the $H^s$-closure of $$\{u\in \SC^\infty(M;E):\mathrm{spt}(u)\cap (\{T_0\}\times M_0)=\emptyset\}.$$ 
\end{rem}

A standard duality argument using \eqref{eq:P.duality} gives the following well-posedness result in negative order Sobolev scales. The uniqueness part of the proof makes use the propagation of singularities result Corollary \ref{cor:propagation}, and so we impose an additional assumption on the principal symbol of $L-L^*$. We refer the reader to Section \ref{sec:propagation} for more details.
\begin{prop}
\label{prop:forward}
Let $(M,g)$, $E$, and $P$ be as in Proposition \ref{prop:forwardsmooth}, and suppose that $L\in\mathrm{Diff}^1(M;E)$ has the property that $L-L^*$ has scalar principal symbol. Let $s\geq 0$ and let $f\in H^{-s}(\Sigma;E)\cap \SE'((-T,T_0)\times \pd M_0;E)$ for some $T_0$ with $|T_0|<T$.
Then there exists a unique solution $u\in H^{-s}(M;E)$ to \eqref{eq:wave.eq}.
Moreover $\nabla_\nu u|_\Sigma\in H^{-s-1}(\Sigma;E)$ and $(u,\nabla_t u)|_{\{T_1\}\times M_0}\in H^{-s}(M_0;E)\times H^{-s-1}(M_0;E)$
for any $T_1\in [T_0,T]$.
\end{prop}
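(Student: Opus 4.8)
The plan is to construct $u$ by transposition, which reduces everything to the forward well-posedness of Proposition \ref{prop:forwardsmooth} applied to the formal adjoint $P^*=\Box+L^*$; this is again of the form $\Box+L'$ with $L'\in\mathrm{Diff}^1(M;E)$ and with the same principal part, since $\Box^*=\Box$. For $F\in\SC_c^\infty(\mathrm{int}(M);E)$ let $w=w(F)\in\SY_s$ be the solution provided by Proposition \ref{prop:forwardsmooth} (for $P^*$) to $P^*w=F$ with $w|_\Sigma=0$ and $(w,\nabla_tw)|_{\{T\}\times M_0}=0$; then $\nabla_\nu w|_\Sigma\in H^s(\Sigma;E)$ with $\|\nabla_\nu w|_\Sigma\|_{H^s(\Sigma)}\le C\|F\|_{H^s(M)}$. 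The Green's formula \eqref{eq:div.thm}, \eqref{eq:P.duality}, in the form relating $P$ to $P^*$ (the extra first-order boundary term from $L-L^*$ likewise drops out below), shows that a classical solution of \eqref{eq:wave.eq} with smooth data $f$ satisfies $(u,F)_E=-(f,\nabla_\nu w(F))_{E,\Sigma}$, the contributions from $\{\pm T\}\times M_0$ vanishing because the Cauchy data of $u$ vanishes at $t=-T$ and that of $w$ at $t=T$. Since $f\in\SE'((-T,T_0)\times\pd M_0;E)$ is supported in a fixed compact subset of $(-T,T_0)\times\pd M_0$, hence bounded away from $\{t=-T\}$ and $\{t=T\}$, the right-hand side is a legitimate pairing of $H^{-s}(\Sigma;E)$ against $H^s(\Sigma;E)$, bounded by $C\|f\|_{H^{-s}(\Sigma)}\|F\|_{H^s(M)}$; by density of $\SC_c^\infty(\mathrm{int}(M);E)$ in $H_0^s(M;E)$ it therefore defines an element $u\in H^{-s}(M;E)=(H_0^s(M;E))^*$, which we take as the solution. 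Note that existence uses neither Corollary \ref{cor:propagation} nor the hypothesis on $L-L^*$.

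Next one checks that $u$ solves \eqref{eq:wave.eq}. Testing against $F=P^*\varphi$ with $\varphi\in\SC_c^\infty(\mathrm{int}(M);E)$ and using uniqueness in Proposition \ref{prop:forwardsmooth} gives $w(P^*\varphi)=\varphi$, hence $\nabla_\nu w|_\Sigma=0$ and $(u,P^*\varphi)_E=0$, so $Pu=0$ in $\mathrm{int}(M)$. The boundary and trace statements I would verify by approximation: choose $f_j\in\SC_c^\infty((-T,T_0)\times\pd M_0;E)$ with supports in a fixed compact set and $f_j\to f$ in $H^{-s}(\Sigma;E)$; the transposition solution attached to $f_j$ coincides with the classical solution $\mathcal{S}(0,f_j,0)\in\SY_s$ (in the smooth category the Green's formula identifies the two functionals), and by the transposition bound $\mathcal{S}(0,f_j,0)\to u$ in $H^{-s}(M;E)$. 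The boundary regularity theory for $P$ at the non-characteristic boundary $\pd M$ developed in Section \ref{sec:propagation} makes the maps $v\mapsto v|_\Sigma$, $v\mapsto\nabla_\nu v|_\Sigma$ and $v\mapsto(v,\nabla_tv)|_{\{T_1\}\times M_0}$ continuous on $\{v\in H^{-s}(M;E):Pv=0\}$ into $H^{-s}(\Sigma;E)$, $H^{-s-1}(\Sigma;E)$ and $H^{-s}(M_0;E)\times H^{-s-1}(M_0;E)$ respectively (the last because each $f_j$, hence $f$, vanishes for $t$ in a neighbourhood of $[T_0,T]$, so $v$ solves the homogeneous lateral problem there). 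Passing to the limit gives $u|_\Sigma=f$, the stated regularity of $\nabla_\nu u|_\Sigma$, and that of $(u,\nabla_tu)|_{\{T_1\}\times M_0}$ for $T_1\in[T_0,T]$; finite speed of propagation, applied uniformly to the $\mathcal{S}(0,f_j,0)$, forces $u$ to vanish near $\{-T\}\times M_0$, which is the zero Cauchy condition.

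For uniqueness, let $u\in H^{-s}(M;E)$ solve \eqref{eq:wave.eq} with $f=0$, so $u|_\Sigma=0$, $(u,\nabla_tu)|_{\{-T\}\times M_0}=0$ and $Pu=0$. Applying the Green's formula between $u$ and $w=w(F)$ for arbitrary $F\in\SC_c^\infty(\mathrm{int}(M);E)$, every boundary term vanishes: those on $\Sigma$ because $u|_\Sigma=0=w|_\Sigma$, those on $\{-T\}\times M_0$ and on $\{T\}\times M_0$ because the Cauchy data of $u$, respectively of $w$, vanishes there. Hence $(u,F)_E=(Pu,w)_E=0$ for all such $F$, so $u=0$. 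The delicate point is the legitimacy of this Green's formula for the distribution $u$: one must know that the boundary traces in it are well defined and that $u$ can be regularised in a way compatible with integration by parts. This is exactly where the propagation of singularities result Corollary \ref{cor:propagation}, and with it the hypothesis that $L-L^*$ has scalar principal symbol, is needed: it propagates the regularity of $u$ from $\pd M$, where the Dirichlet and Cauchy data are controlled, along the null bicharacteristics of $\Box$ into $\mathrm{int}(M)$, each such bicharacteristic meeting, traced backward in time, either $\Sigma$ or $\{-T\}\times M_0$. This propagation step, together with the vector-bundle boundary regularity of Section \ref{sec:propagation}, is the main technical obstacle; the transposition argument itself is routine.
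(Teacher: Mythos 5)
Your existence argument by transposition is essentially the paper's: pairing the candidate solution against $\nabla_\nu w(F)|_\Sigma$ for the backward adjoint problem and using the continuity $H_0^{s}(M;E)\to H^{s}(\Sigma;E)$ of $F\mapsto \nabla_\nu w(F)|_\Sigma$ from Proposition \ref{prop:forwardsmooth} is exactly how the paper produces $u\in H^{-s}(M;E)$. The two remaining parts, however, each contain a genuine gap. For the regularity of $\nabla_\nu u|_\Sigma$ and of $(u,\nabla_t u)|_{t=T_1}$ you appeal to a ``boundary regularity theory developed in Section \ref{sec:propagation}'' that would make the trace maps continuous on $\{v\in H^{-s}(M;E):Pv=0\}$. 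Section \ref{sec:propagation} proves propagation of $\WF_b$, a qualitative statement; it provides no quantitative Sobolev bounds for traces of negative-order solutions, and none are proved elsewhere. The missing idea is that these traces are themselves obtained by transposition: $f\mapsto\nabla_\nu\mathcal{S}(0,f,0;-T)|_\Sigma$ is the transpose of $g\mapsto\mathcal{S}(0,g,0;T)$, and $f\mapsto(\nabla_t\mathcal{S}(0,f,0;-T),-\mathcal{S}(0,f,0;-T))|_{t=T_1}$ is the transpose of $\phi\mapsto\mathcal{S}(0,0,\phi;T_1)$; since the latter maps are continuous on the positive Sobolev scale by Proposition \ref{prop:forwardsmooth}, duality gives exactly the stated negative-order regularity of the traces.

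For uniqueness, the structure (pair the homogeneous solution with $w(F)$ and kill the boundary terms) is fine once $u$ is known to be smooth, but your justification of that smoothness via Corollary \ref{cor:propagation} is not correct as described. Knowing that a backward bicharacteristic ``meets $\Sigma$, where the Dirichlet data is controlled'' gives nothing: with zero Dirichlet data a singularity simply reflects at a hyperbolic point of $\Sigma$ and continues, so points of $T_b^*M$ over $\Sigma$ are not thereby excluded from $\WF_b(u)$, and for the difference of two solutions the Neumann data is not known to vanish. What is actually needed is that every complete compressed generalised bicharacteristic contains a point where $u$ is already known to be smooth, and the only available such region is $t$ near $-T$. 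Since $\{-T\}\times M_0$ is itself a boundary face, the paper first extends $u$ by zero to $[-T-\delta,-T)\times M_0$ (legitimate because the Cauchy data vanishes at $t=-T$), so that $u\equiv 0$ on an open set which every bicharacteristic enters; Corollary \ref{cor:propagation} then forces $\WF_b(u)=\emptyset$, hence $u\in\SC^\infty$ by \cite[Theorem~18.3.27]{Hormander3}, and the smooth uniqueness of Proposition \ref{prop:forwardsmooth} (or, equivalently, your Green's-formula pairing) finishes. Without the extension-by-zero step, and with the reliance on bicharacteristics merely ``meeting $\Sigma$'', the argument as written does not close.
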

\begin{proof}
	First we show uniqueness. If $u_1,u_2\in H^{-s}(M;E)$ solve \eqref{eq:wave.eq} for a particular $f\in H^{-s}(\Sigma;E)\cap \SE'((-T,T_0)\times \pd M_0;E)$, then their difference $w=u_1-u_2$ is a solution with vanishing boundary data. Extending $w$ by $0$ to $[-T-\delta,-T)\times M_0$, for $\delta >0$ small, and extending $g,E,\nabla,L$ in an arbitrary smooth manner to this region, the initial conditions on $u_j$ imply that the extended $w$ remains a distributional solution on $[-T-\delta,T]\times M_0$ with vanishing boundary data. As every compressed generalised bicharacteristic of $P$ meets $[-T-\delta,-T)\times M_0$, where $w$ is identically zero, it follows from Corollary \ref{cor:propagation} that $\WF_b(w)=\emptyset$, and so $w\in\SC^\infty(M)$ from \cite[Theorem~18.3.27]{Hormander3}. It immediately follows that $w=0$ from the uniqueness part of Proposition \ref{prop:forwardsmooth}.

	Now, let $f\in \SC_c^\infty((-T,T_0)\times \pd M_0)$ and $F\in \SC_c^\infty$ be arbitrary. 
	Let $u=\mathcal{S}(0,f,0;-T)$ and let $v=\mathcal{S}(F,0,0;T)$.
	Substituting $u,v$ into \eqref{eq:P.duality}, we obtain 
	$$(u,F)_{E}=-(f,\nabla_\nu v|_{\Sigma})_E.$$
	Hence the map $F\mapsto -\nabla_\nu \mathcal{S}(F,0,0;T)|_\Sigma$ is the transpose of the map $f\mapsto \mathcal{S}(0,f,0;-T)$. As the former is a continuous map $H_0^{s+1}(M;E)\to H^s(\Sigma;E)$, it follows that the latter extends to a continuous map $$H^{-s}(\Sigma;E)\cap \SE'((-T,T_0)\times \pd M_0;E)\to H^{-s}(M;E).$$
	Similarly, the regularity of the trace $\nabla_\nu u|_\Sigma$ follows from that fact that the pairs of maps
	\begin{align}
		f&\mapsto \mathcal{S}(0,f,0;-T)\\
		g&\mapsto \mathcal{S}(0,g,0;T)\quad (g\in\SC_c^\infty(\Sigma;E))
	\end{align}
	are transpose to each other, and the regularity of the trace $(u,\nabla_t u)|_{t=T_1}$ follows from the fact that
	\begin{align}
		f&\mapsto (\nabla_t\mathcal{S}(0,f,0;-T),-\mathcal{S}(0,f,0;-T))|_{t=T_1}\\
		\phi&\mapsto \mathcal{S}(0,0,\phi;T_1)\quad (\phi\in \SC_c^\infty(\mathrm{int}(M_0);E)^2)
	\end{align}
	are transpose to each other.
\end{proof}
The exact controllability result we require is analogous to the result \cite[Proposition~6.1]{LO1} for scalar wave operators. As in the scalar setting, exact controllability can be obtained by combining a standard duality argument with an observability estimate. For us, this observability estimate is Proposition \ref{prop:observability}, proven in Section \ref{sec:observability} below.
\begin{prop}
\label{prop:control}
Let $(M,g)$ be of the form \eqref{eq:productform},\eqref{eq:metricform}, and let $E$ be a rank $N$ Hermitian vector bundle over $M$ equipped with a compatible connection $\nabla$. Suppose further that $g$ satisfies hypotheses (H2)-(H4) of Theorem \ref{thm:main} and let $\tilde{g}$ be a smooth Lorentzian metric on $M$ that lies in a sufficiently small $\SC^2(M)$-neighbourhood of $g$.

Let $$P=\Box+L\in \mathrm{Diff}^2(M;E)$$ where $\Box$ is the connection wave operator associated to $(\tilde g,\nabla)$ and $L\in \mathrm{Diff}^1(M;E)$ has the property that $L-L^*$ has scalar principal symbol.

Let $T_1\in [T_0,T]$.
Let $\phi\in H^{-s}(M_0;E)\times H^{-s-1}(M_0;E)$ with $s\geq 0$, compactly supported in the interior of $M_0$.

Then there exists $f\in H^{-s}(\Sigma;E)\cap \SE'((-T,T_0)\times \pd M_0;E)$ such that the unique solution $u\in H^{-s}(M;E)$ to \eqref{eq:wave.eq} satisfies 
\begin{equation}\label{eq:trace}
	(u,\nabla_t u)|_{t=T_1}=\phi.
\end{equation}
\end{prop}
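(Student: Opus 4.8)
The plan is to obtain exact controllability from the observability estimate of Proposition~\ref{prop:observability} by the standard Hilbert Uniqueness Method (HUM)-type duality argument, exactly as in \cite[Proposition~6.1]{LO1}, adapted to the bundle setting. The key point is to recognize the control map as the transpose of a suitable observation map for the adjoint problem, and then to use the observability estimate to conclude surjectivity (in the appropriate Sobolev scale) of the control map.

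First I would set up the dual (adjoint) problem. By Proposition~\ref{prop:forwardsmooth} (and Remark~\ref{rem:general.source}), for smooth data the backward problem for $P^*=\Box+L^*$ with vanishing boundary data on $\Sigma$ and Cauchy data prescribed at $t=T_1$ is well-posed; denote its solution operator $v=\mathcal{S}^*(0,0,\psi;T_1)$. Using the bilinear identity \eqref{eq:P.duality} applied to a solution $u=\mathcal{S}(0,f,0;-T)$ of the forward problem and such a $v$, all volume terms and the Cauchy terms at $t=-T$ drop out (since $u$ has vanishing Cauchy data at $t=-T$ and $v$ solves the homogeneous equation), leaving an identity of the schematic form
\begin{equation*}
	(\phi,\psi)_{M_0,t=T_1}=\pm(f,\nabla_\nu v|_\Sigma)_{E,\Sigma},
\end{equation*}
where on the left one has the $L^2(M_0;E)$ pairing of the Cauchy data of $u$ at $t=T_1$ (which is $\phi$ if $u$ is the controlled solution) against the adjoint Cauchy data $\psi$. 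This identifies the map $\phi\mapsto f$ we seek as (a realization of) the transpose of the observation map $\psi\mapsto \nabla_\nu v|_{\Gamma}$, where $\Gamma=(-T,T_0)\times\pd M_0$ is the control region; note the restriction to $\Gamma$ is legitimate because we are free to take $f$ supported there, which on the dual side corresponds to only testing against the normal derivative of $v$ on $\Gamma$.

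Next I would invoke Proposition~\ref{prop:observability}: the observability estimate asserts (for the class of metrics $\tilde g$ and operators $P$ in the hypotheses, using (H2)-(H4) and the geometric control condition à la \cite{BLR}) that the energy of $v$ at time $T_1$ is controlled by the $L^2$-norm of $\nabla_\nu v$ on $\Gamma$, i.e.\ roughly $\norm{\psi}_{H^{s+1}(M_0)\times H^s(M_0)}\lesssim \norm{\nabla_\nu v|_\Gamma}_{H^s(\Sigma)}$ in the relevant scale. This is precisely the coercivity statement needed to run the HUM argument: the observability estimate says the observation map is bounded below, hence its transpose is surjective onto the appropriate space. Concretely, one defines the quadratic form $\psi\mapsto \norm{\nabla_\nu\mathcal{S}^*(0,0,\psi;T_1)|_\Gamma}^2_{L^2}$, shows via observability that it is equivalent to a norm on the (negative-order) data space dual to the energy space, completes to get a Hilbert space, and checks that the associated isomorphism applied to $\phi$ (viewed as an element of the dual) produces a $\psi$ whose associated $f:=\pm\nabla_\nu\mathcal{S}^*(0,0,\psi;T_1)|_\Gamma$ drives the forward solution to $\phi$ at $t=T_1$. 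The regularity bookkeeping — that $f$ lands in $H^{-s}(\Sigma;E)\cap\SE'((-T,T_0)\times\pd M_0;E)$ and that the resulting $u\in H^{-s}(M;E)$ with $(u,\nabla_t u)|_{t=T_1}=\phi$ — follows from the mapping properties already recorded in Propositions~\ref{prop:forwardsmooth} and \ref{prop:forward}, together with a density/duality argument passing from smooth $\phi$ to compactly supported $\phi\in H^{-s}(M_0;E)\times H^{-s-1}(M_0;E)$.

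The main obstacle I expect is not in the functional-analytic HUM machinery, which is routine once the observability estimate is in hand, but rather in the careful matching of function spaces and the verification that the various solution operators of Propositions~\ref{prop:forwardsmooth}--\ref{prop:forward} are genuinely transpose to one another in the bundle setting — in particular ensuring the boundary pairings in \eqref{eq:P.duality} are the right ones (the $\sign\ang{\nu,\nu}$ factors, and the fact that $\Sigma$ is timelike so $\nu$ is spacelike) and that the scalar-principal-symbol hypothesis on $L-L^*$ is what guarantees the adjoint problem is governed by the same propagation-of-singularities result (Corollary~\ref{cor:propagation}) needed for uniqueness. Once these identifications are made, the argument is a verbatim adaptation of the scalar case in \cite{LO1}.
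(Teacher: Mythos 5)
Your proposal is correct and follows essentially the same route as the paper: the paper defines the observation map $\mathcal{T}:\phi\mapsto \chi\nabla_\nu\mathcal{S}(0,0,\phi;T_1)|_{\Sigma}$, uses \eqref{eq:P.duality} to identify the control map $g\mapsto(-\nabla_t\mathcal{S}(0,\chi g,0;-T),\mathcal{S}(0,\chi g,0;-T))|_{t=T_1}$ as its transpose, and concludes surjectivity from the observability estimate exactly as you do. Your HUM-style completion is just an equivalent packaging of the paper's ``injective with closed range implies surjective transpose'' step, and your attention to the $P$ versus $P^*$ bookkeeping in the duality identity is, if anything, slightly more careful than the paper's.
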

\begin{proof}
	We take $\chi\in\SC_c^\infty((-T,T_0)\times \pa M_0)$ as in Proposition \ref{prop:observability}. For $\phi=(\phi_0,\phi_1)\in \SC_c^\infty(M_0;E)^2$ We then consider the map
	\begin{equation}
		\mathcal{T}:\phi\mapsto \chi\nabla_\nu\mathcal{S}(0,0,\phi;T_1)|_{\Sigma}.
	\end{equation}
For $g\in \SC^\infty(\ol{\Sigma};E)$, \eqref{eq:P.duality} implies that
\begin{align}
	\label{eq:control.duality}
(\mathcal{T}\phi,g)_E&=(\phi_1,\mathcal{S}(0,\chi g,0;-T)|_{t=T_1})_E-(\phi_0,\nabla_t \mathcal{S}(0,\chi g,0;-T)|_{t=T_1})_E.
\end{align}
From Proposition \ref{prop:forwardsmooth}, the operator $\mathcal{T}$ extends continuously to $$\mathcal{T}:H_0^{s+1}(M_0;E)\times H_0^s(M_0;E)\to H^s(\Sigma;E) .$$
From \eqref{eq:control.duality}, the transpose of $\mathcal{T}$ is given by
\begin{equation}
	g\mapsto (-\nabla_t\mathcal{S}(0,\chi g,0;-T),\mathcal{S}(0,\chi g,0;-T)).
\end{equation}
is the transpose of $\mathcal{T}$, and maps $$H^{-s}(\Sigma;E)\to H^{-s-1}(M_0;E)\times H^{-s}(M_0;E).$$
From Proposition \ref{prop:observability}, $\mathcal{T}$ is injective with closed range, and so the transpose is surjective as claimed.
\end{proof}

\section{Proof of Theorem \ref{thm:main}}
\label{sec:mainthm}
In this section, we complete the proof of Theorem \ref{thm:main}. Let $M,\tilde{g},E$ be as in the statement of this theorem, and fix connections $\nabla_{1},\nabla_2$ and potentials $V_1,V_2$ such that $\Lambda_1=\Lambda_2$. The main result of the section is Proposition \ref{prop:connections.equivalent}, establishing the gauge equivalence of $(\nabla_1,V_1)$ and $(\nabla_2,V_2)$ in the region
\begin{equation}\label{eq:region.recovery}
	\mathcal{D}=\{p\in M: \ol{\SE_p}\subseteq (T_0,T)\times M_0\}
\end{equation}
where $T_0$ is as in the hypothesis (H3) of Theorem \ref{thm:main}. 
\begin{prop}
\label{prop:gauge.equality}
	Suppose the hypotheses of Theorem \ref{thm:main} are satisfied and $p=(T_1,x_0)\in\mathrm{int}(\mathcal{D})$ for some $T_1\in (T_0,T)$.
	
	Suppose $\Lambda_1=\Lambda_2$, for some $V_1,V_2\in\SC^\infty(M)$ and connections  $\nabla_1,\nabla_2$. Then there exists $A(p)\in \End(E)|_p$ such that for any $f\in H_0^{\frac{n+1}{2}}(\Sigma)$ we have 
	\begin{equation}\label{eq:gauge.equality.at.p}
	u_f^{(1)}(p)=A(p)u_f^{(2)}(p),
	\end{equation}
	where $u_f^{(k)}$ are the solutions to \eqref{eq:wave.eq} guaranteed by Proposition \ref{prop:forward} and \ref{prop:forwardsmooth}.
\end{prop}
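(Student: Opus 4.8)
The plan is to reduce the pointwise identity at $p$ to the existence of a single unitary operator $W$ intertwining the two wave dynamics on the slice $\{T_1\}\times M_0$, and then to read $A(p)$ off from the action of $W$ on a point mass at $x_0$, using exact controllability (Proposition~\ref{prop:control}) and finite speed of propagation. Write $C_k\colon f\mapsto u_f^{(k)}|_{t=T_1}$ for the control map sending admissible boundary data to the restriction to $\{T_1\}\times M_0$ of the solution of \eqref{eq:wave.eq} for $P_k=\Box_{\nabla_k}+V_k$; by Proposition~\ref{prop:forwardsmooth} it is bounded into $L^2(M_0;E)$ on $H_0^{(n+1)/2}(\Sigma)$, and by Proposition~\ref{prop:forward} it extends continuously to $H^{-s}(\Sigma;E)\cap\SE'((-T,T_0)\times\partial M_0;E)$. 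First I would prove a Blagovescenskii-type identity. Because $\Box_{\nabla_k}=\nabla_k^{*}\nabla_k$ and $V_k$ satisfies \eqref{eq:potential.compat}, each $P_k$ is formally self-adjoint and \eqref{eq:P.duality} holds; feeding $u_f^{(k)},u_h^{(k)}$ into \eqref{eq:P.duality} on the slabs $\{-T<t<\tau\}\times M_0$ and using that the Cauchy data vanish at $t=-T$ gives, as in the scalar setting of \cite{LO1} — the skew-Hermiticity of the connection coefficients ($B_i^{(k)}\in\mathfrak g\subseteq\mathfrak u(\CC^N)$) being exactly what makes the boundary contributions of the connection terms depend only on $f$, $h$, $\Lambda_kf$, $\Lambda_kh$ — that a suitable sesquilinear pairing of the Cauchy data of $u_f^{(k)}$ and $u_h^{(k)}$ at two times $T_1,T_1'$ is determined by $\Lambda_k$ alone. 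Since $\Lambda_1=\Lambda_2$, this forces in particular, at $T_1'=T_1$,
$$\big(u_f^{(1)}|_{t=T_1},\,u_h^{(1)}|_{t=T_1}\big)_{L^2(M_0;E)}=\big(u_f^{(2)}|_{t=T_1},\,u_h^{(2)}|_{t=T_1}\big)_{L^2(M_0;E)}.$$

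Next I would build $W$. The displayed identity says $C_1^{*}C_1=C_2^{*}C_2$, and Proposition~\ref{prop:control} gives that the ranges of $C_1$ and $C_2$ are dense in $L^2(M_0;E)$ (they contain $\SC_c^\infty(\mathrm{int}(M_0);E)$), so the polar decomposition yields a unitary $W$ on $L^2(M_0;E)$ with $C_1=WC_2$, i.e. $W\bigl(u_f^{(2)}|_{t=T_1}\bigr)=u_f^{(1)}|_{t=T_1}$ for all $f$. The two-time form of the identity together with density of the ranges makes $W$ independent of $T_1\in(T_0,T)$; and since for $f$ supported in $\Gamma$ the solution $u_f^{(k)}$ has vanishing lateral data on $(T_0,T)\times\partial M_0$, differentiating the two-time identity in $T_1,T_1'$ and invoking $P_k u_\cdot^{(k)}=0$ to trade second-order $t$-derivatives for spatial operators shows that $W$ (on Cauchy-data pairs) intertwines the self-adjoint spatial connection Laplacians governing $P_1$ and $P_2$. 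As $P_1$ and $P_2$ share their principal symbol, these have a common (scalar) principal symbol, so $W$ and $W^{-1}=W^{*}$ map each Sobolev space $H^r(M_0;E)$ into itself and extend by duality to the negative scales.

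Then I would extract $A(p)$. Since $p\in\mathrm{int}(\SD)$ we have $x_0\in\mathrm{int}(M_0)$. For $v\in E_p$, Proposition~\ref{prop:control} yields $f_v\in H^{-s}(\Sigma;E)\cap\SE'((-T,T_0)\times\partial M_0;E)$ with $u_{f_v}^{(1)}|_{t=T_1}=\delta_{x_0}\otimes v$, so $u_{f_v}^{(2)}|_{t=T_1}=W^{*}(\delta_{x_0}\otimes v)\in H^{-s}(M_0;E)$. Finite speed of propagation — the same for $P_1$ and $P_2$ since they share their principal symbol — applied to solutions produced by controls supported in small pieces of $\Gamma$, together with the mapping properties above, confines the singular support of $W^{*}(\delta_{x_0}\otimes v)$ to $\{x_0\}$; lying in $H^{-s}(M_0;E)$ for every $s>n/2$, it can carry no derivatives of $\delta_{x_0}$ and hence equals $\delta_{x_0}\otimes(Mv)$ for a linear map $M=M(p)\colon E_p\to E_p$. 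Pairing $W\bigl(u_f^{(2)}|_{t=T_1}\bigr)=u_f^{(1)}|_{t=T_1}$ with $\delta_{x_0}\otimes v$ then gives, for every $f\in H_0^{(n+1)/2}(\Sigma)$,
$$\big(u_f^{(1)}(p),v\big)_{E_p}=\big(u_f^{(2)}|_{t=T_1},\,W^{*}(\delta_{x_0}\otimes v)\big)_{L^2(M_0;E)}=\big(u_f^{(2)}(p),Mv\big)_{E_p},$$
so $u_f^{(1)}(p)=M^{*}u_f^{(2)}(p)$, and $A(p):=M^{*}$ does the job.

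The main obstacle is this last step: showing that $W^{*}$ carries the point-evaluation functional at $x_0$ to another point-evaluation functional there — equivalently, that $W$ acts at $x_0$ as a genuine point transformation rather than a general zeroth-order operator. This requires combining finite speed of propagation carefully with the controllability and the Sobolev mapping properties of $W$, and it is the analogue of the technical heart of the scalar argument in \cite{LO1}. The derivation of the Blagovescenskii identity in the first step, where the connection coefficients must be tracked through the integrations by parts and shown to reorganize into boundary quantities built from $\Lambda_k$, is the other point that needs care.
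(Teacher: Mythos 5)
Your proposal has two genuine gaps, and both are at points where the paper's argument takes a different (and essential) route.

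First, the Blagovescenskii-type identity you assert in step one --- that the $L^2(M_0;E)$ inner products $\bigl(u_f^{(k)}|_{t=T_1},u_h^{(k)}|_{t=T_1}\bigr)$ are determined by $\Lambda_k$ --- is not available in this setting. The classical Blagovescenskii argument requires the spatial part of the operator to be time-independent: one sets $W(t,s)=(u_f(t),u_h(s))$ and needs $(A(t)u_f(t),u_h(s))-(u_f(t),A(s)u_h(s))$ to reduce to boundary terms, which fails when $A(t)\neq A(s)$. Here the metric, connection and potential are all time-dependent, and avoiding Blagovescenskii is precisely the point of the approach in \cite{LO1},\cite{LO2}. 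What Green's identity \eqref{eq:P.duality} on the slab $[-T,T_1]\times M_0$ actually yields is only the \emph{symplectic} pairing
\begin{equation}
(\Lambda_k f,h)-(f,\Lambda_k h)=\bigl(c^{-1/2}\nabla_t u_f^{(k)}|_{T_1},u_h^{(k)}|_{T_1}\bigr)-\bigl(c^{-1/2}u_f^{(k)}|_{T_1},\nabla_t u_h^{(k)}|_{T_1}\bigr),
\end{equation}
not the $L^2$ pairing. Consequently the identity $C_1^*C_1=C_2^*C_2$ and the unitary $W$ obtained from it by polar decomposition are not justified, and the rest of your construction has no foundation.

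Second, even granting some intertwining operator, the localization step --- showing that the state corresponding under the second dynamics to $\delta_{x_0}\otimes v$ is again a point mass at $x_0$ --- cannot be extracted from finite speed of propagation. Finite speed only bounds $\spt(u_h^{(2)})$ by the causal future of $\spt(h)\subset(-T,T_0)\times\partial M_0$, which at time $T_1$ is far larger than $\{x_0\}$. The paper's mechanism is the unique continuation principle, Proposition \ref{prop:UCP}: since $h$ is supported in $\Gamma$ and $\Lambda_1=\Lambda_2$, both $u_h^{(2)}|_\Sigma$ and $\nabla_\nu u_h^{(2)}|_\Sigma$ vanish on $\Sigma\cap\SE_p$, whence $u_h^{(2)}$ vanishes on all of $\SE_p$, and $\SE_p$ covers the slice $\{T_1\}\times M_0$ away from $x_0$. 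The membership of the Cauchy data in $H^{-(n-1)/2}\times H^{-(n+1)/2}$ then rules out derivatives of $\delta_{x_0}$. Your proposal never invokes Proposition \ref{prop:UCP}, yet it is the technical heart of the proof. The paper then closes the loop by applying the symplectic pairing above to $f$ and to the specially chosen controls $h_i$ with $(u_{h_i}^{(1)},\nabla_t u_{h_i}^{(1)})|_{T_1}=(0,\delta_i)$, which yields the pointwise relation $u_f^{(1)}(p)=A(p)u_f^{(2)}(p)$ directly, with no need for a globally defined unitary intertwiner.
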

\begin{proof}
	Fix a local orthonormal frame $(E_1,\ldots,E_N)$ for $E$ near $p$ and define  $\delta_j \in H^{-\frac{n+1}{2}}(M_0;E)$ by $$(v,\delta_j)_{H_0^{\frac{n+1}{2}}\times H^{-\frac{n+1}{2}}(M_0;E)}=\ang{v(x_0),E_j(p)}_{E}$$ for $v\in \SC_c^\infty(M_0;E)$, where the left-hand side pairing is the sesquilinear duality pairing.

	An application of Proposition \ref{prop:control} shows the existence of $h_1,\ldots,h_N\in H^{-\frac{n-1}{2}}(\Sigma;E)\cap \SE'((-T,T_0)\times \pd M_0)$ such that $u_{h_j}^{(1)}$ satisfies 
	\begin{equation}
		(u_{h_j}^{(1)},\nabla_{t} u_{h_j}^{(1)})|_{t=T_1}=(0,\delta_j).
	\end{equation}
	Let $\mathscr{I}=\Span(h_j)$. From the support condition on $h_j$ in Proposition \ref{prop:control}, $u_{h_j}^{(1)}$ vanishes on $\Sigma\cap \SE_p$. From the finiteness of propagation speed it also follows that $\nabla_\nu u_{h_j}^{(1)}$ vanishes on $\Sigma \cap \SE_p$.
	
	Equality of the Dirichlet-to-Neumann operators $\Lambda_1,\Lambda_2$ then implies that $\nabla_\nu u_{h_j}^{(2)}$ vanishes on $\Sigma\cap \SE_p$, and by the unique continuation principle Proposition \ref{prop:UCP}, it follows that $u_{h_j}^{(2)}$ vanishes in $\SE_p$. In particular, the traces $(u_{h_j}^{(2)},\nabla_{t} u_{h_j}^{(2)})_{t=T_1}\in H^{-\frac{n-1}{2}}\times H^{-\frac{n+1}{2}}(M_0;E)$
	are supported at $x_0$, and so are both linear combination of the $\delta_j$ and their distributional derivatives.
	The Sobolev space membership implies 
	\begin{equation}
		(u_{h_i}^{(2)},\nabla_{t} u_{h_i}^{(2)})|_{t=T_1}=\sum_{j=1}^N a_{ij}^{(2)}(0,\delta_j).
	\end{equation}
	Using \eqref{eq:P.duality}, we can then compute
	\begin{align}
	& (\Lambda_k f,h_i)_{H^{\frac{n-1}{2}}\times H_0^{-\frac{n-1}{2}}(\Sigma;E)}-(f,\Lambda_k h_i)_{H_0^{\frac{n+1}{2}}\times H^{-\frac{n+1}{2}}(\Sigma;E)} \\
	&= (c^{-1/2}\nabla_{t} u_{f}^{(k)}|_{t=T_1},u_{h_i}^{(k)}|_{t=T_1})_{H^{\frac{n-1}{2}}\times H_0^{-\frac{n-1}{2}}(M_0;E)}\\
	&-(c^{-1/2}u_{f}^{(k)}|_{t=T_1},\nabla_{t} u_{h_i}^{(k)}|_{t=T_1})_{H^{\frac{n+1}{2}}\times H_0^{-\frac{n+1}{2}}(M_0;E)}\\
	&= -c^{-1/2}(p)\sum_{j=1}^N \ol{a_{ij}^{(k)}}(u_f^{(k)}|_{t=T_1},\delta_j)_{H^{\frac{n+1}{2}}\times H_0^{-\frac{n+1}{2}}(M_0;E)}\label{eq:pairing.lincomb}
	\end{align}
	where $a_{ij}^{(1)}=1$ for $i=j$ and $0$ otherwise, where the pairings are the sesquilinear duality pairings.
	Equality of the Dirichlet-to-Neumann maps $\Lambda_k$ implies \eqref{eq:pairing.lincomb} is independent of $k$, and it follows that for each $i$ we have
	$$(u_f^{(1)}|_{t=T_1},\delta_i)_{H^{\frac{n+1}{2}}\times H_0^{-\frac{n+1}{2}}(M_0;E)}=\sum_{j=1}^N \ol{a_{ij}}(u_f^{(2)}|_{t=T_1},\delta_j)_{H^{\frac{n+1}{2}}\times H_0^{-\frac{n+1}{2}}(M_0;E)}$$ 
	with coefficients $a_{ij}$ independent of $f$, and so $$A:=\sum_{i,j=1}^N \ol{a_{ij}} \, E_i\otimes E_j^*$$ satisfies the claims of the proposition.
\end{proof}
We shall also need the following lemma based on a Gaussian beam construction, the proof of which is deferred to Section \ref{sec:goptics}.
\begin{lem}
	\label{lem:goptics}
	For $p=(T_1,x_0)\in\mathcal{D}$ and $(E_1,\ldots,E_N)$ a local orthonormal frame near $p$, we can find ${f\in \SC_0^\infty((-T,T_1)\times \partial M_0)}$ such that
	\begin{enumerate}[i)]
		\item 	$u_f=\sum_{j=1}^N c_j E_j +o(1)$\label{eq:goptics1}
		\item $ du_f=i\lambda\sum_{j=1}^N c_j\omega^j \otimes E_j+O(1)$\label{eq:goptics2}
	\end{enumerate}
	at $p$ for any $c_j\in \CC$ and $\omega^j\in L_p^+M \subset T^* M$ as $\lambda\to\infty$.
\end{lem}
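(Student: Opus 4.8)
The plan is to construct, for each frame index $j$, a Gaussian beam concentrated along a null geodesic through $p$ whose covelocity at $p$ is $\omega^j$ and whose amplitude at $p$ equals $c_j E_j$, and then to superpose. By linearity of the map $f\mapsto u_f$ it suffices to fix $j$, write $c=c_j$, $\omega=\omega^j$, and produce $f=f_\lambda\in\SC_c^\infty((-T,T_1)\times\partial M_0;E)$ for which $u_f(p)=c\,E_j+O(\lambda^{-\infty})$ and $du_f(p)=i\lambda\,c\,\omega\otimes E_j+O(1)$ as $\lambda\to\infty$; summing the resulting boundary data over $j$ then gives the lemma. Since the data will be supported in $\{t<T_1\}$, finite speed of propagation lets me carry out the whole construction on the slab $[-T,T_1]\times M_0$, on which $u_f$ solves the analogous mixed problem with final time $T_1$.

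First I would fix the geometry. Let $\gamma$ be the null geodesic of $(M,\tilde g)$ with $\gamma(s_*)=p$ and $\dot\gamma(s_*)^\flat=\omega$, parametrised so that $d\theta=\dot\gamma^\flat$ holds along the associated null bicharacteristic. Along $\gamma$ the $\partial_t$-component of the velocity never vanishes, so $t\circ\gamma$ is strictly monotone; following $\gamma$ in the direction in which $t$ decreases we obtain a segment $\gamma|_{[s_*,s_1]}$ contained in $\{t<T_1\}$ away from $p$. By the uniqueness of causal geodesics, (H2), this segment lies on the light cone of $p$, hence in $\overline{\SE_p}$, so since $p\in\mathcal{D}$ it meets $\partial M$ only in $\overline{\SE_p}\cap\partial M\subset(T_0,T)\times\partial M_0$ (see \eqref{eq:D.defn}); by (H4) these contacts are finite in number and, combined with $t<T_1$, all lie in $(T_0,T_1)\times\partial M_0$ — in particular the exit point $x_*:=\gamma(s_1)$ does. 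I would then embed $M_0$ in a closed manifold $\tilde M_0$, extend $\tilde g,E,\nabla,V$ smoothly across $\partial M_0$, and extend $\gamma$ a little beyond $x_*$ outside $M_0$.

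Next I would run the standard Gaussian beam construction for $P=\Box+V$, adapted to $E$: an approximate solution $v_\lambda=e^{i\lambda\theta}\,a$ on a thin tube around $\gamma([s_*-\epsilon,s_1+\epsilon])$, with $\theta$ a complex phase solving the (scalar) eikonal equation, $\Im\theta\ge0$ vanishing to exactly second order along $\gamma$ with positive transverse Hessian, $d\theta=\dot\gamma^\flat$ on $\gamma$, normalised by $\theta(p)=0$; and $E$-valued amplitude $a$ solving the first-order transport equation along $\gamma$, now a linear ODE in which the connection $\nabla$ enters (through \eqref{eq:products2}), with the value $a(p)=c\,E_j$ prescribed at $p$. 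Inserting a cutoff $\chi(s)$ equal to $1$ on $[s_*,s_1]$ and with $\chi'$ supported at parameters where $\gamma$ lies at positive distance from the slab, one obtains $v_\lambda\in\SC^\infty$ that, restricted to $[-T,T_1]\times M_0$, satisfies $Pv_\lambda=O_{\SC^k}(\lambda^{-\infty})$ for every $k$, vanishes near $\{-T\}\times M_0$, has $f:=v_\lambda|_\Sigma\in\SC_c^\infty((-T,T_1)\times\partial M_0;E)$, and has $v_\lambda(p)=c\,E_j$ and $dv_\lambda(p)=i\lambda\,c\,\omega\otimes E_j+O(1)$. Finally, letting $u_f\in\SY_s$ be the solution of \eqref{eq:wave.eq} from Proposition \ref{prop:forwardsmooth} and $w:=v_\lambda-u_f$, we have $Pw=O(\lambda^{-\infty})$, $w|_\Sigma=0$ and vanishing Cauchy data on $\{-T\}\times M_0$, so the energy estimate underlying Proposition \ref{prop:forwardsmooth} gives $\|w\|_{H^{s+1}([-T,T_1]\times M_0;E)}=O(\lambda^{-\infty})$ for every $s$; by Sobolev embedding $w=O(\lambda^{-\infty})$ in $\SC^1$ near $p$, and evaluating at $p$ and summing over $j$ produces the asserted expansions of $u_f$ and $du_f$.

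The lengthy technical part, occupying most of Section \ref{sec:goptics}, is the Gaussian beam machinery in the bundle setting: solving the transport equations for the $E$-valued amplitude and re-deriving $Pv_\lambda=O(\lambda^{-\infty})$ from the local form \eqref{eq:top.two.terms} of $\Box$. I expect the real obstacle, however, to be the geometric step: it is exactly the hypothesis $p\in\mathcal{D}$, together with (H2) and (H4), that forces the decreasing-$t$ branch of $\gamma$ to reach $\partial M$ only at times below $T_1$ and only inside $\Sigma$, which is what makes the beam's boundary trace an admissible source $f\in\SC_c^\infty((-T,T_1)\times\partial M_0)$; keeping track of this (and of the boundary contacts permitted by (H4)) is the delicate point.
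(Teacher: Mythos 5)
Your proposal is correct and follows essentially the same route as the paper: a Gaussian beam $e^{i\lambda\theta}a$ concentrated on the null geodesic through $p$ with momentum $\omega^j$ and amplitude $E_j$ at $p$, whose boundary trace serves as $f$, with the stability estimate of Proposition \ref{prop:forwardsmooth} transferring the expansion from the approximate solution to $u_f$, and with $p\in\SD$ together with (H2)--(H4) guaranteeing that the beam's trace lands in $(-T,T_1)\times\partial M_0$. The only (immaterial) differences are that the paper works with a two-sided tube exiting a spatial extension $\tilde M_0$ at both ends and cuts the boundary data off in time with a cutoff $\eta$, whereas you use only the past-directed branch, and that the finite-order construction yields $O(\lambda^{-K})$ for any fixed $K$ rather than the $O(\lambda^{-\infty})$ you state, which is more than enough for the claimed $o(1)$ and $O(1)$ remainders.
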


\begin{prop}
\label{prop:gauge.smooth}
$A\in \SC^\infty(\mathrm{int}(\mathcal{D});\End(E))$.
\end{prop}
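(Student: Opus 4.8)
The plan is to exploit the pointwise identity \eqref{eq:gauge.equality.at.p} against a \emph{fixed} family of smooth reference solutions, thereby writing $A$ locally as a product of smooth matrix-valued functions. Since smoothness is a local property, fix $p_0=(T_1^0,x_0^0)\in\mathrm{int}(\mathcal D)$ and a local orthonormal frame $(E_1,\dots,E_N)$ of $E$ over an open neighbourhood $U\subseteq\mathrm{int}(\mathcal D)$ of $p_0$; it suffices to show that $A$ is smooth on a (possibly smaller) $U$.

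First I would manufacture a smooth moving frame of $E$ near $p_0$ out of solutions of the wave equation. Applying Lemma~\ref{lem:goptics} at $p_0$, with a fixed null covector $\omega\in L_{p_0}^+M$ and with $(c_1,\dots,c_N)$ ranging over the standard basis of $\CC^N$, and then fixing the concentration parameter $\lambda$ large, produces boundary data $f_1,\dots,f_N\in\SC_0^\infty((-T,T_1^0)\times\pa M_0)$ with $u_{f_j}^{(2)}(p_0)=E_j(p_0)+o(1)$; for $\lambda$ large the vectors $u_{f_1}^{(2)}(p_0),\dots,u_{f_N}^{(2)}(p_0)$ then form a small perturbation of the orthonormal basis $(E_j(p_0))_j$, hence a basis of $E_{p_0}$. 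Each $f_j$ is smooth and supported in the interior of $\Sigma$, so $(0,f_j,0)\in\SX_s$ for every $s\ge0$ (the corner compatibility conditions being trivially satisfied), and Proposition~\ref{prop:forwardsmooth} gives $u_{f_j}^{(k)}\in\bigcap_{s\ge0}\SY_s\subseteq\SC^\infty(M;E)$ for $k=1,2$. Expanding in the frame, $u_{f_j}^{(k)}(p)=\sum_{i}G^{(k)}_{ij}(p)\,E_i(p)$, the matrix-valued maps $G^{(k)}:U\to\CC^{N\times N}$ are smooth and $\det G^{(2)}(p_0)\neq0$; shrinking $U$, we may assume $G^{(2)}(p)\in\GL(N,\CC)$ for all $p\in U$, equivalently that $(u_{f_j}^{(2)}(p))_j$ is a basis of $E_p$ for every $p\in U$.

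To finish I would run \eqref{eq:gauge.equality.at.p} along this family. Each $f_j$ lies in $H_0^{\frac{n+1}{2}}(\Sigma)$, so Proposition~\ref{prop:gauge.equality} yields $u_{f_j}^{(1)}(p)=A(p)\,u_{f_j}^{(2)}(p)$ for all $p\in U$; because $(u_{f_j}^{(2)}(p))_j$ spans $E_p$ this determines $A(p)$ uniquely and, in the frame, reads $[A(p)]\,G^{(2)}(p)=G^{(1)}(p)$ with $[A(p)]\in\CC^{N\times N}$ the matrix of $A(p)$. Hence $[A(p)]=G^{(1)}(p)\,G^{(2)}(p)^{-1}$, which is smooth in $p$ since $G^{(1)},G^{(2)}$ are smooth and inversion is smooth on $\GL(N,\CC)$. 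Therefore $A\in\SC^\infty(U;\End(E))$, and as $p_0$ was arbitrary, $A\in\SC^\infty(\mathrm{int}(\mathcal D);\End(E))$. (Swapping the indices $1$ and $2$ produces a smooth $B$ with $B(p)A(p)=\Id$ on the span of $(u_{f_j}^{(1)}(p))_j$, i.e.\ on $E_p$, so $A$ is in addition pointwise invertible --- useful in the sequel but not needed here.)

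I expect the only real obstacle to be the construction of the fixed family $f_1,\dots,f_N$: one needs Lemma~\ref{lem:goptics} simultaneously to deliver smooth boundary data --- so that the associated solutions are genuine smooth sections in a neighbourhood of $p_0$, rather than the distributional solutions of Proposition~\ref{prop:forward} --- and to realise a full basis of $E_{p_0}$ at a single frequency $\lambda$. Once this is in place the remainder is just smoothness of matrix inversion together with the openness of the condition $\det G^{(2)}\neq0$.
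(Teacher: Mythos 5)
Your proposal is correct and follows essentially the same route as the paper: use Lemma \ref{lem:goptics} to produce smooth boundary data $f_1,\dots,f_N$ whose solutions $u_{f_j}^{(2)}$ form a frame near $p$, then write the matrix of $A$ as $G^{(1)}(G^{(2)})^{-1}$ and invoke smoothness of the solutions and of matrix inversion. Your extra care in explaining how Lemma \ref{lem:goptics} yields linear independence at a fixed large $\lambda$, and your (correct) appeal to Proposition \ref{prop:forwardsmooth} rather than Proposition \ref{prop:forward} for the smoothness of the solutions, only make explicit what the paper leaves implicit.
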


\begin{proof}
	Let $p\in \mathrm{int}(\mathcal{D})$ and fix a local orthonormal frame $(E_1,\ldots,E_N)$ near $p$.	Using Lemma \ref{lem:goptics}, we can find $f_j\in \SC_0^\infty((-T,T_0)\times \pd M_0)$ for $j=1,\ldots,N$ such that $\{u_{f_j}^{(2)}(p)\}$ are linearly independent.
	
	Now we consider the $N\times N$ matrices $M^{(k)}$ with columns comprised of the coefficients of $u_{f_j}^{(k)}$ with respect to the frame $(E_1,\ldots,E_N)$.
	
	As the $u_{f_j}^{(k)}$ are smooth by Proposition \ref{prop:forward}, these matrices are smooth. Moreover it follows from linear independence of $\{u^{(2)}_{f_j}(p)\}$ that $M^{(2)}$ is invertible in a small neighbourhood $U$ of $p$. 
	
	As $M^{(1)}=M_AM^{(2)}$ where $M_A$ is the local matrix representation of $A$ with respect to the frame $(E_1,\ldots,E_N)$, we deduce that $M_A=M^{(1)}(M^{(2)})^{-1}$ and so $A$ is smooth in a neighbourhood of $p$.
\end{proof}
%

\begin{lem}
	\label{lem:D.connected}
	The region $\SD$ is connected.
\end{lem}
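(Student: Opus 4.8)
The plan is to exhibit $\SD$ as a union of connected sets all meeting the connected set $\{T_1\}\times M_0$, which is contained in $\SD$ by (H5). For $x\in M_0$ let $I_x=\{\sigma\in[-T,T]:(\sigma,x)\in\SD\}$ be the fibre of $\SD$ over $x$; then $\SD=\bigcup_{x\in M_0}(I_x\times\{x\})$ and $T_1\in I_x$ for every $x$. Since $M_0$ is connected, so is $\{T_1\}\times M_0$, and if each $I_x$ is an interval then $I_x\times\{x\}$ is connected, so $(\{T_1\}\times M_0)\cup(I_x\times\{x\})$ is connected as the union of two connected sets meeting at $(T_1,x)$. These sets all contain $\{T_1\}\times M_0$, so their union --- namely $\SD$ --- is connected. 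Hence it suffices to show that whenever $(s,x)\in\SD$, the whole closed $t$-interval between $s$ and $T_1$ lies in $I_x$; this makes $I_x$ star-shaped about $T_1$ in $\RR$, hence an interval.

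To prove this I would argue by causal monotonicity along the fibre $[-T,T]\times\{x\}$, which is a timelike curve for $\tilde g$. Fix $(s,x)\in\SD$ and $\sigma$ between $s$ and $T_1$, and set $\ell=\min(s,T_1)$, $u=\max(s,T_1)$; then $(\ell,x)$ and $(u,x)$ are the points $(s,x),(T_1,x)$ in some order and both lie in $\SD$, while $(\ell,x)\le(\sigma,x)\le(u,x)$ in the causal order of $(M,\tilde g)$, so by transitivity $J^+((u,x))\subseteq J^+((\sigma,x))$ and $J^-((\ell,x))\subseteq J^-((\sigma,x))$. A short preliminary step shows $\SD\subseteq(T_0,T)\times M_0$: any $p\in\SD$ lies in $\ol{\SE_p}$ (e.g. since $\exp_p(v)\in\SE_p$ for small spacelike $v$, and such $v$ converge to $0$, cf.\ (H2)), and $\ol{\SE_p}\subseteq(T_0,T)\times M_0$ by \eqref{eq:region.recovery}; in particular $T_0<\ell\le u<T$. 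Because $\ol{\SE_{(\ell,x)}}$ and $\ol{\SE_{(u,x)}}$ are compact subsets of the ($t$-open) set $(T_0,T)\times M_0$, one may choose $\epsilon\in(0,\min(\ell-T_0,\,T-u))$ so that both are contained in $(T_0+\epsilon,T-\epsilon)\times M_0$. Then the slab $[-T,T_0+\epsilon]\times M_0$ is disjoint from $\SE_{(\ell,x)}$, hence contained in $J^-((\ell,x))\cup J^+((\ell,x))$; since $t$ is a time function, its points (having $t$-coordinate $<\ell$) cannot lie in $J^+((\ell,x))$, so the slab lies in $J^-((\ell,x))\subseteq J^-((\sigma,x))$. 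Symmetrically $[T-\epsilon,T]\times M_0\subseteq J^+((u,x))\subseteq J^+((\sigma,x))$. Therefore $\SE_{(\sigma,x)}\subseteq(T_0+\epsilon,T-\epsilon)\times M_0$, whence $\ol{\SE_{(\sigma,x)}}\subseteq[T_0+\epsilon,T-\epsilon]\times M_0\subseteq(T_0,T)\times M_0$, i.e.\ $(\sigma,x)\in\SD$, as required.

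The routine ingredients here are the monotonicity of $J^\pm$ under the causal order and the compactness argument producing $\epsilon$. The point I expect to require the most care is the causal bookkeeping: fixing the time orientation so that $\partial_t$ is future-pointing and the inclusions $J^+((u,x))\subseteq J^+((\sigma,x))$, $J^-((\ell,x))\subseteq J^-((\sigma,x))$ run in the correct direction, together with the preliminary fact $\SD\subseteq(T_0,T)\times M_0$, which is exactly what makes the strict inequalities on the two slabs available. (One should also keep in mind that the argument uses the form \eqref{eq:region.recovery} of the definition of $\SD$ that is in force throughout Section~\ref{sec:mainthm}.)
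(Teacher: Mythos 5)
Your proof is correct, and its overall skeleton coincides with the paper's: both arguments reduce connectedness of $\SD$ to showing that each fibre $I_x=\{t:(t,x)\in\SD\}$ is an interval containing $T_1$, and then invoke connectedness of $M_0$ together with (H5). The difference lies in how the interval property (equivalently, the causal convexity of $\SD$) is established. The paper first proves an equivalent characterisation: $p\in\SD$ iff for every $z\in\pa M_0$ there is $\epsilon>0$ with $(T_0+\epsilon,z)\le p\le(T-\epsilon,z)$; from this, the implication ``$p_-\le p\le p_+$ with $p_\pm\in\SD$ implies $p\in\SD$'' is immediate by transitivity of $\le$. You instead work directly with the definition \eqref{eq:region.recovery}, using the complement identity $M\setminus\SE_p=J^-(p)\cup J^+(p)$, the nesting $J^-((\ell,x))\subseteq J^-((\sigma,x))$ and $J^+((u,x))\subseteq J^+((\sigma,x))$, and a compactness argument to trap the slabs $[-T,T_0+\epsilon]\times M_0$ and $[T-\epsilon,T]\times M_0$ inside $J^-((\sigma,x))$ and $J^+((\sigma,x))$ respectively. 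Both routes rest on the same two facts (monotonicity of $J^\pm$ under the causal order and monotonicity of $t$ along causal curves); yours is somewhat longer but entirely self-contained, while the paper's boundary-point characterisation is slicker. Your closing remark about \eqref{eq:D.defn} versus \eqref{eq:region.recovery} is apt: the paper's own proof quotes \eqref{eq:D.defn}, whereas the lemma sits in Section \ref{sec:mainthm} where \eqref{eq:region.recovery} is the operative definition; your argument uses the latter consistently, which is the appropriate choice there.
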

\begin{proof}
	First we note that \eqref{eq:D.defn} implies that $p\in \SD$ if and only if for every $z\in \pa M_0$, there exists $\epsilon > 0$ such that $(T_0+\epsilon,z) \leq p \leq (T-\epsilon,z)$. Indeed, if these causality conditions hold then every point $q\in [-T,T_0]\times \pa M_0$ satisfies $q \ll p $ by chronological transitivity, and so $[-T,T_0]\times \pa M_0\cap \ol{\SE_p}=\emptyset$. Similarly $\{T\}\times \pa M_0\cap \ol{\SE_p}=\emptyset$, so $p\in \SD$. Conversely, the violation of one of these causality statements for all $\epsilon > 0$ forces $\ol{\SE_p}\cap (\{T_0,T\}\times \pa M_0)\neq \emptyset$ and hence $p\notin \SD$.

	From this alternate characterisation of $\SD$, it immediately follows that if $p_-,p_+\in \SD$ with $p_-\leq p_+$, then we have $p\in \SD$ for all $p$ satisfying $p_-\leq p \leq p_+$. In particular, for $x\in M_0$ the set $\{t:(t,x)\in \SD\}$ is an interval containing $T_1$ by assumption (H5) of Theorem \ref{thm:main}. As $M_0$ is connected, we can conclude that $\SD$ is connected.
\end{proof}

\begin{prop}
\label{prop:unitary}
The section $A\in\SC^\infty(\mathrm{int}(\mathcal{D});\End(E))$ is a $G$-section.
\end{prop}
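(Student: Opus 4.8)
The plan is to reduce the statement to a pointwise claim about parallel transport. Since $A$ is already smooth by Proposition \ref{prop:gauge.smooth}, it suffices to show that, for each $p\in\mathrm{int}(\SD)$ and each $G$-frame near $p$, the matrix of $A(p)$ lies in $G$. I would do this by showing that $A(p)$ equals $\Pi^{(1)}_\gamma\circ(\Pi^{(2)}_\gamma)^{-1}$, where $\Pi^{(k)}_\gamma : E_{q_0}\to E_p$ denotes $\nabla_k$-parallel transport along a null geodesic $\gamma$ joining a point $q_0\in\Sigma$ (with time coordinate less than $T_1$) to $p$. Since $\nabla_1$ and $\nabla_2$ are compatible with the $G$-structure, each $\Pi^{(k)}_\gamma$ carries $G$-frames at $q_0$ to $G$-frames at $p$, so in a fixed pair of $G$-frames at $q_0$ and $p$ it is represented by an element of $G$ (transition matrices between $G$-frames lie in $G$), and hence the matrix of $A(p)=\Pi^{(1)}_\gamma(\Pi^{(2)}_\gamma)^{-1}$ in the $G$-frame at $p$ is a product of two elements of $G$, hence in $G$. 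In particular $A$ is unitary, as $G\subseteq U(\CC^N)$.

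To obtain the identity $A(p)=\Pi^{(1)}_\gamma(\Pi^{(2)}_\gamma)^{-1}$ I would revisit the Gaussian beam construction of Section \ref{sec:goptics} underlying Lemma \ref{lem:goptics}, applied along a single null geodesic $\gamma$ from such a $q_0$ to $p=(T_1,x_0)$. Running that construction for the operator $P_2$ with prescribed initial amplitude $a_0\in E_{q_0}$ yields a family $f_\lambda\in\SC_0^\infty((-T,T_1)\times\pd M_0)$ with
\[
u^{(2)}_{f_\lambda}(p)=\rho\,\Pi^{(2)}_\gamma a_0+o(1),\qquad\lambda\to\infty,
\]
where the phase of the beam, the Gaussian concentration profile, and the scalar amplitude factor $\rho=\rho(p)\neq0$ depend only on $\tilde g$ and $\gamma$, while the matrix part of the amplitude is propagated by the transport equation governed by $\nabla_2$. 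The key point is that the same oscillatory boundary data $f_\lambda$, now fed to $P_1$, produces a solution agreeing up to $o(1)$ at $p$ with the $P_1$-beam along $\gamma$ carrying the same boundary amplitude $a_0$, so that
\[
u^{(1)}_{f_\lambda}(p)=\rho\,\Pi^{(1)}_\gamma a_0+o(1).
\]
Here one argues exactly as in the proof of Lemma \ref{lem:goptics}: the difference between $u^{(k)}_{f_\lambda}$ and the corresponding formal beam solves the boundary value problem with $O(\lambda^{-\infty})$ source, vanishing Dirichlet data, and vanishing Cauchy data, hence is negligible at $p$ by the energy estimates of Proposition \ref{prop:forwardsmooth}. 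Combining the two displays with the pointwise relation $u^{(1)}_{f_\lambda}(p)=A(p)u^{(2)}_{f_\lambda}(p)$ from Proposition \ref{prop:gauge.equality} and dividing by $\rho$, we get $A(p)\,\Pi^{(2)}_\gamma a_0=\Pi^{(1)}_\gamma a_0$ in the limit; since $a_0\in E_{q_0}$ is arbitrary and $\Pi^{(2)}_\gamma$ is invertible, $A(p)=\Pi^{(1)}_\gamma(\Pi^{(2)}_\gamma)^{-1}$, completing the argument.

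The main obstacle is the second display: establishing that the beam data manufactured for $P_2$ still behaves, to leading order at $p$, as a $P_1$-beam with the same initial amplitude. This is where one must look inside the construction of Section \ref{sec:goptics} rather than merely quote Lemma \ref{lem:goptics}, and observe that the only operator-dependent feature of the leading-order beam is the replacement of $\nabla_1$- by $\nabla_2$-parallel transport in the amplitude transport equation, the eikonal equation and all metric-determined data being common to both operators since $P_1$ and $P_2$ share the principal part $\Box_{\tilde g}$. Everything after that is the elementary observation that the relative parallel transport $\Pi^{(1)}_\gamma(\Pi^{(2)}_\gamma)^{-1}$ of two $G$-connections is represented by an element of $G$ in any $G$-frame.
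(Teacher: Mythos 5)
Your overall strategy is genuinely different from the paper's: you aim to identify $A(p)$ directly with the relative parallel transport $\Pi^{(1)}_\gamma(\Pi^{(2)}_\gamma)^{-1}$ along a null geodesic from $\Sigma$ to $p$, whereas the paper extracts the first-order equation $d\tilde A=\tilde A B_2-B_1\tilde A$ from the intertwining identity $P_1(Au^{(2)}_f)=0$ via \eqref{eq:products3}, Lemma \ref{lem:goptics} and Lemma \ref{lem:basic.lorentzian.end}, and then integrates this $\mathfrak g$-valued ODE from the initial condition $\tilde A=\Id$ on $\SD\cap\Sigma$ (using Lemma \ref{lem:D.connected}). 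The two are morally equivalent -- the paper's ODE is precisely the infinitesimal form of your parallel-transport identity -- and your closing algebraic step (parallel transport of a $G$-compatible connection carries $G$-frames to $G$-frames, so the relative transport is represented by an element of $G$ in any $G$-frame) is correct and mirrors the paper's Lie-group argument.

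However, there is a genuine gap at your second display. You assert that the difference between $u^{(1)}_{f_\lambda}$ and the $P_1$-beam with initial amplitude $a_0$ at $q_0$ has ``vanishing Dirichlet data.'' It does not: $f_\lambda$ is the boundary trace of the $P_2$-beam, and the amplitudes of the $P_1$- and $P_2$-beams are transported along $\gamma$ by \emph{different} connections, so they agree only at the single point $q_0$ and differ at first order in $(s-s(q_0),y')$ on $\Sigma\cap\ST$. The Dirichlet data of your difference is therefore a nonzero beam-like profile of the form $\chi e^{i\lambda\phi}(a^{(1)}-a^{(2)})$ with $a^{(1)}-a^{(2)}=O(|y'|)+O(\lambda^{-1})$ on the boundary. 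A crude bound (energy estimate in $H^{s}$ with $s>(n+1)/2$ followed by Sobolev embedding to evaluate at $p$) does not give $o(1)$, because each derivative of the oscillatory data costs a factor of $\lambda$; so the step as written fails. The gap is closable -- one can build a corrector $P_1$-beam whose trace matches the mismatch and whose leading amplitude vanishes identically on $\gamma$ (since $\nabla_s a_{0,0}=0$ propagates the zero initial value), hence contributes $o(1)$ at $p$ -- but this is an additional layer of the construction of Section \ref{sec:goptics} that you need to supply, not a consequence of Lemma \ref{lem:goptics} or of Proposition \ref{prop:forwardsmooth} as quoted. The paper's route avoids this entirely by never comparing solutions of $P_1$ and $P_2$ with the same boundary data at the level of beams: it uses the beam only as a test solution of $P_2$ inserted into an identity that already holds exactly.
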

\begin{proof}
From Proposition \ref{prop:gauge.equality}, for arbitrary $f\in \SC_c^\infty(\Sigma)$ we have $P_1(Au_2)=P_2u_2=0$ in $\mathcal{D}$ where $u_j=u_f^{(j)}$ are the solutions to $\eqref{eq:wave.eq}.$

Taking $u=u_2$ and applying \eqref{eq:products3}, we then have
\begin{align}
	0&= P_1(Au)-AP_2u\\
	&= (P_1 A) u+A(P_1-P_2)u -AV_1u-2C(\nabla_1 A\nabla_1 u)\label{eq:det.calc}
\end{align}
pointwise, where $C$ denotes contraction of the factor $T^*M\otimes T^*M$ using the metric tensor $g$.

We now fix $p\in\mathrm{int}(\SD)$ and consider a local $G$-frame $(E_1,\ldots,E_N)$ for $E$ in a neighbourhood $U\subseteq M$ of $p$. Let $\phi:E|_U\to U\times \CC^N$ be the induced local trivialisation.

Then if we take
\[v=(\phi^{-1})^* (u|_U)\in \SC_c^\infty(U;\CC^N)\]
and
\[\tilde{A}=(\phi^{-1})^*(A|_U) \phi^*\in \SC_c^\infty(U;\CC^{N\times N}),\]
then \eqref{eq:det.calc} together with \eqref{eq:top.two.terms} and \eqref{eq:covariant.A} implies:
\begin{equation}
	\label{eq:before.killing.zeroth}
	C(d\tilde A+B_1\tilde A-\tilde AB_2)(dv))=Zv
\end{equation}
where $Z$ is a smooth matrix-valued function and the $B_j$ are as in \eqref{eq:B.def.0}, corresponding to the connections $\nabla_j$.

We then take $v=v_\lambda$ to be a family of Gaussian beam solutions for $P_2$ through $p$ using Lemma \ref{lem:goptics} and send $\lambda\to \infty$.
Noting that $Zv_\lambda=O_\lambda(1)$, \eqref{eq:before.killing.zeroth} implies
\begin{equation}
	\label{eq:after.killing.zeroth}
	C(d\tilde A+B_1\tilde A-\tilde AB_2)(dv))=0.
\end{equation}


As $dv(p)=i\lambda\sum_j \omega_j \otimes e_j+O_\lambda(1)$ with $\omega_j\in L_p^+$ arbitrary, an application of Lemma \ref{lem:basic.lorentzian.end} implies
\begin{equation}\label{eq:derivative.1}
d\tilde A=\tilde AB_2- B_1\tilde A
\end{equation}
at $p\in\mathrm{int}(\SD)$. As $p$ was arbitrary, \eqref{eq:derivative.1} holds identically on $\mathrm{int}(\SD)$. 

Now let $p=(T_1,x)\in \mathcal{D}\cap (-T,T)\times \pd M_0$, where $T_1$ is as in hypothesis (H5) of Theorem \ref{thm:main}.  Let $q\in \mathrm{int}(\SD)$ be arbitrary. Then by Lemma \ref{lem:D.connected}, we may choose a smooth curve $\gamma\in\SC^\infty([0,1],\SD)$ with $\gamma((0,1))\subset \mathrm{int}(\SD)$, $\gamma(0)=p$ and $\gamma(1)=q$. Along this curve, $\tilde A$ satisfies the differential equation
\begin{equation}
\label{eq:DE.curve}
 \tilde A'(t)=\tilde A(t)S_1(t)+S_2(t)\tilde A(t)
\end{equation}
where the $S_j$ take values in $\mathfrak{g}$, from \eqref{eq:B.def.0} and the assumption that $B_j\in \mathfrak{g}\subseteq{u}(N)$.
Smoothness of $S_1,S_2$ in $M$ allows us to extend $\tilde A$ continuously to the endpoints, so by taking the limit of \eqref{eq:gauge.equality.at.p} along $\gamma$ for some fixed $f\in \SC_c^\infty(\Sigma)$, we obtain $\tilde A(0)f(p)=f(p)$. As this can be done for any choice of $f$, it follows $\tilde A(0)=\Id$.

This initial condition together with \eqref{eq:DE.curve} implies that $\tilde A(t)$ takes values in the corresponding Lie group $G$. Indeed, following a standard technique for dealing with non-autonomous systems (see for example \cite[p.~69]{kobayashi}), the unique solution to \eqref{eq:DE.curve} must coincide with the unique solution $\tilde A\in \SC^\infty([0,1],G)$ to $\tilde A'=\tilde AS_1+S_2\tilde A$ with $\tilde A(0)=\Id$, that is the first component of the integral curve from $\Id$ of the vector field $V(A,t)=(AS_1(t)+S_2(t)A,1)$ on $G\times \RR$.
\end{proof}

\begin{prop}
\label{prop:connections.equivalent}
In the region $\mathcal{D}$ defined in \eqref{eq:region.recovery}, we have
\begin{equation}
A^*\nabla_1 A=\nabla_2 \textrm{ and }A^*V_1 A=V_2.
\end{equation}
\end{prop}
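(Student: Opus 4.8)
The plan is to upgrade the algebraic relation \eqref{eq:derivative.1} for the trivialised gauge $\tilde A$, together with the already-established identity $P_1(Au_2)=P_2u_2$ on $\mathcal D$, into the two stated equalities. First I would recast \eqref{eq:derivative.1} in invariant form. Since \eqref{eq:derivative.1} reads $d\tilde A=\tilde A B_2-B_1\tilde A$ in a local $G$-frame, and $\nabla_1 A$ in such a frame is $d\tilde A+B_1\tilde A-\tilde A B_2$ by \eqref{eq:covariant.A} (with $B=B_1$ on the left factor and $B=B_2$ coming from the $E^*$ factor via the dual connection $\nabla_2$ on the right, exactly as in the computation leading to \eqref{eq:before.killing.zeroth}), this says precisely that $A$ is parallel for the induced connection on $\Hom(E,E)$ built from $\nabla_1$ on the target and $\nabla_2$ on the source; equivalently $\nabla_1(Au)=A\nabla_2 u$ for every section $u$, i.e. $A$ intertwines $\nabla_2$ and $\nabla_1$. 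From this, $A^*\nabla_1 A=\nabla_2$ on $\mathrm{int}(\SD)$ is immediate, and it extends to all of $\mathcal D$ by continuity (using smoothness of $A$ up to $\mathcal D\cap\Sigma$ along the curves of Lemma \ref{lem:D.connected}, as in the proof of Proposition \ref{prop:unitary}). One should double check the bookkeeping of which connection sits on which tensor factor — that is the one genuinely fiddly point, and it is settled by comparing \eqref{eq:def.connection.dual}, \eqref{eq:coefficients.dual} and \eqref{eq:covariant.A}.

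Next I would extract the potential equality. Feed the intertwining relation $\nabla_1(Au_2)=A\nabla_2 u_2$ back into \eqref{eq:products3}: we have $P_1(Au_2)=(\Box_1 A)u_2+A\Box_2 u_2-2C(\nabla_1 A\,\nabla_2 u_2)+V_1 A u_2$, but $\nabla_1 A=0$ in the relevant sense kills the contraction term, and $\Box_1 A=\nabla_1^*\nabla_1 A=0$ as well since $A$ is $\nabla_1$–$\nabla_2$–parallel (so $\nabla_1 A$ vanishes, hence so does its divergence). Therefore $P_1(Au_2)=A\Box_2 u_2+V_1 A u_2 = A(P_2 u_2 - V_2 u_2)+V_1 A u_2$. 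Combining with $P_1(Au_2)=P_2 u_2=0$ (Proposition \ref{prop:gauge.equality}) gives $(V_1 A - A V_2)u_2 = 0$ pointwise on $\mathrm{int}(\SD)$, for every solution $u_2=u_f^{(2)}$. Now invoke Lemma \ref{lem:goptics}: the Gaussian beam solutions realise every value in the fibre $E_p$ (the leading coefficients $c_j$ are arbitrary), so $\{u_f^{(2)}(p)\}$ spans $E_p$, forcing $V_1 A=A V_2$ at $p$, i.e. $A^*V_1 A=V_2$ on $\mathrm{int}(\SD)$, and then on $\mathcal D$ by continuity.

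The main obstacle is not the abstract argument but making sure the cancellations in \eqref{eq:products3} are clean: one must verify that the "derivative" identity \eqref{eq:derivative.1}, which was only derived as an equality of trivialised matrices at interior points, really does encode $\nabla A=0$ for the correct tensor-product connection on $\End(E)\cong E^{1,1}$, and hence genuinely makes both $\nabla_1 A$ (as an $\End(E)\otimes T^*M$–valued object built with $\nabla_1$ on $E$ and $\nabla_2$ on $E^*$) and its contraction vanish, so that the $C(\nabla_1 A\,\nabla_1 u)$ term in \eqref{eq:products3} — more precisely its analogue with the mixed connection — drops out. Once that identification is pinned down, the remainder is routine: plug into \eqref{eq:products3}, use $P_1(Au_2)=P_2u_2=0$, and apply the spanning property of the Gaussian beams from Lemma \ref{lem:goptics} together with the connectedness of $\SD$ from Lemma \ref{lem:D.connected} to propagate both identities from $\mathrm{int}(\SD)$ to all of $\mathcal D$ and fix the boundary normalisation $A|_{\SD\cap\Sigma}=\Id$.
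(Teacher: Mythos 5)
Your proposal is correct, and the two identities you arrive at are the same ones the paper proves; the route differs in a way worth noting. For the connection, the paper re-runs the Gaussian-beam leading-order computation: it sets $P_3=A^*P_1A$, derives $(P_3-P_2)u=2C((B_2-B_3)(du))+O_\lambda(1)$, and applies Lemma \ref{lem:basic.lorentzian.end} a second time to get $B_2=B_3$. You instead reuse \eqref{eq:derivative.1}, which was already established in the proof of Proposition \ref{prop:unitary}, and observe that $d\tilde A=\tilde AB_2-B_1\tilde A$ is, after unwinding $B_3=\tilde A^{-1}d\tilde A+\tilde A^{-1}B_1\tilde A$ and using $\tilde A^*=\tilde A^{-1}$, exactly the local statement $B_3=B_2$, i.e.\ $A^*\nabla_1A=\nabla_2$. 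That is a legitimate and slightly more economical shortcut; its only cost is the bookkeeping you yourself flag, namely that \eqref{eq:products3} and the Leibniz rule \eqref{eq:products} are stated in the paper for a single connection, so for your cancellation of the $\nabla A$ and $\nabla^*\nabla A$ terms you must work with the mixed connection on $\Hom(E,E)$ (with $\nabla_1$ on the target factor and $\nabla_2$ on the source factor) and check that the intertwining passes to adjoints, which it does because $A$ is pointwise unitary. For the potential, your reduction to $(V_1A-AV_2)u_2=0$ is the same computation as the paper's $(V_3-V_2)u=0$ conjugated by $A$, and both conclude via the spanning property of the Gaussian beam values from Lemma \ref{lem:goptics}; the extension from $\mathrm{int}(\SD)$ to $\SD$ by continuity along the curves of Lemma \ref{lem:D.connected} is as in Proposition \ref{prop:unitary}. (The boundary normalisation $A|_{\SD\cap\Sigma}=\Id$ you mention at the end is not part of this proposition; it was already obtained in Proposition \ref{prop:unitary}.)
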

\begin{proof}
Define $P_3=A^* P_1 A$, where $A\in\SC^\infty(\SD;U(E))$ is as in Proposition \ref{prop:unitary}. It is immediate that $P_3$ is of the form \eqref{eq:box.def} with connection $\nabla_3=A^*\nabla_1 A$ and potential $V_3=A^* V_1 A$. We shall show $\nabla_2=\nabla_3$ and $V_2=V_3$.

Applying Proposition \ref{prop:gauge.equality}, we get that $P_3(u_f^{(2)})=P_2(u_f^{(2)})=0$ for any $f\in H_0^{\frac{n+1}{2}}((-T,T_1)\times \partial M_0;E)$.
We now take fix $p\in\mathrm{int}(\SD)$ and a local frame $(E_1,\ldots,E_N)$ for $E$ near $p$, and again take $u=u_f^{(2)}$ as in Lemma \ref{lem:goptics} parametrised by $\lambda\to\infty$.

As in \eqref{eq:before.killing.zeroth}, we compute 
\begin{equation}\label{eq:recover.connection}
	0=(P_3-P_2)(u)=2C(( B_2- B_3)(du))
\end{equation}
modulo $O_{\lambda}(1)$ at $p$ as $\lambda\to \infty$, where the $B_j$ are as in \eqref{eq:B.def.0} corresponding to the connections $\nabla_j$.
As $du(p)=i\lambda\sum_j \omega^j \otimes E_j+O_\lambda(1)$ with $\omega^j\in L_p^+$ arbitrary, division by $\lambda$ and an application of Lemma \ref{lem:basic.lorentzian.end} implies that $B_2(p)=B_3(p)$, and as $p$ is arbitrary, we get
\begin{equation}
	\nabla_2=\nabla_3\textrm{ in } \SD.
\end{equation}
Furthermore, the equation $(P_3-P_2)u=0$ now reduces to 
\begin{equation}
	\label{eq:recover.potential}
(V_3-V_2)u=0.
\end{equation}
at any $p\in \mathrm{int}(\SD)$.
The identity \eqref{eq:recover.potential} holds at $p$ for any choice of $u(p)$ by another application of Lemma \ref{lem:goptics},  so we can conclude that $V_2=V_3$ in $\SD$.
\end{proof}

\section{Propagation of singularities}
\label{sec:propagation}

In this section, we collect the required results on propagation of singularities for second-order differential operators on vector bundles over manifolds with non-characteristic boundary. These results are classical and well-known in the scalar case, but a comprehensive treatment of the extension of these results to the vector bundle setting does not appear in the literature, to the authors knowledge.

The classical theorem on propagation of singularities for operators of real principal type on closed manifolds goes back to Duistermaat--H\"{o}rmander \cite{Duistermaat-Hormander1}.

\begin{thm}\label{thm:propagation.scalar.1}
Let $M$ be a closed manifold and let $P\in\mathrm{Diff}^m(M)$ have real principal symbol $p\in S^m(T^*M)$. Let $u\in \SD'(M)$ be a solution to $Pu=f\in\SD'(M)$. Then \[\WF(u)\setminus \WF(f)\subseteq \mathrm{char}(P)\subseteq T^*M\setminus 0\] is a union of inextendable integral curves for the Hamiltonian vector field \[H_p=\partial_\xi p \cdot\partial_x-\partial_x p\cdot \partial_\xi \] in $\mathrm{char}(P)\setminus \WF(f)$. 
\end{thm}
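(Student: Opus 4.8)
The plan is to run the classical Duistermaat--H\"{o}rmander argument \cite{Duistermaat-Hormander1} (see also \cite{Hormander3}), which decomposes into two essentially independent parts: microlocal elliptic regularity and propagation along bicharacteristics. The first part, the inclusion $\WF(u)\setminus\WF(f)\subseteq\mathrm{char}(P)$, I would handle by the standard parametrix construction: if $q\notin\mathrm{char}(P)$ then $p$ is elliptic in a conic neighbourhood of $q$, so the symbol calculus on the closed manifold $M$ produces $B\in\Psi^{-m}(M)$ with $BP-\Id$ smoothing microlocally near $q$; writing $u=BPu-(BP-\Id)u$ gives $\WF(u)\subseteq\WF(BPu)\cup\WF((BP-\Id)u)$, where near $q$ the second set is empty and $\WF(BPu)=\WF(Bf)\subseteq\WF(f)$, so $q\in\WF(u)$ would force $q\in\WF(f)$, a contradiction.

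For the propagation part I would first reduce to a local statement. Since $p$ is real, $H_p$ is tangent to $\mathrm{char}(P)=p^{-1}(0)$, and being homogeneous of degree $m-1$ it has well-defined conic integral curves (bicharacteristics). It suffices to prove: if $q_0\in\mathrm{char}(P)\setminus\WF(f)$ and some half-open bicharacteristic arc terminating at $q_0$ but not containing $q_0$ is disjoint from $\WF(u)$, then $q_0\notin\WF(u)$. Applying this and its time-reversal along an integral curve, together with a connectedness argument on the part of the curve lying outside $\WF(f)$, yields the stated invariance of $\WF(u)\cap\mathrm{char}(P)$ away from $\WF(f)$ under $H_p$. (For operators of real principal type the curve has no radial points, so this is unambiguous.)

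To prove the local claim I would use H\"{o}rmander's positive commutator method. Fix a thin conic tube around the relevant arc $\gamma$, and construct a symbol $a\in S^{s}(T^*M)$ — for a regularity level $s$ to be iterated — supported in this tube, elliptic of order $s$ at $q_0$, and chosen so that the transport equation $(H_p+c_p)a^2=-b^2+e$ holds, where $c_p$ is the (real part of the) subprincipal symbol of $P$, $b\in S^{(2s+m-1)/2}$ is elliptic at $q_0$, and $e\in S^{(2s+m-1)/2}$ is supported in the part of $\gamma$ where $u$ is already known to be smooth. Quantising $A=\Op(a)$ and computing $2\,\Im(Pu,A^*Au)=([P,A^*A]u,u)+((P-P^*)A^*Au,u)$, the leading symbol on the right is exactly $H_p(a^2)$ corrected by the subprincipal contribution, so the sharp G\r{a}rding inequality turns the $-b^2$ term into a lower bound and yields, modulo lower-order terms,
\[ \|\Op(b)u\|_{L^2}^2 \;\lesssim\; \|\Op(e)u\|_{L^2}^2 + \bigl|(f,A^*Au)\bigr| + \|u\|_{H^{s-1/2}}^2, \]
where the last norm is localised to a conic neighbourhood of $q_0$. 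All three right-hand terms are finite — the first because $e$ lives where $u$ is smooth, the second because $f=Pu$ is smooth near $\gamma$, the third once $s-\tfrac12$ is below the a priori order of $u$ near $q_0$ — so $q_0\notin\WF_s(u)$; bootstrapping $s\mapsto s+\tfrac12$ completes the argument.

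The step I expect to be the main obstacle is the construction of $a$ together with the regularisation that makes the above rigorous: one needs a cutoff genuinely monotone along the rescaled Hamilton flow, with prescribed support both at finite points and near the cosphere bundle at infinity, and a regularising factor so that the manipulations with a priori unbounded-order operators applied to the distribution $u$ are justified — this is the technical heart of \cite{Duistermaat-Hormander1}. If this direct bookkeeping proves awkward, I would instead, at any non-radial point, conjugate $P$ microlocally by an elliptic Fourier integral operator associated to a homogeneous canonical transformation straightening $\gamma$, reducing $P$ to $D_{x_1}$ modulo a smoothing operator (Egorov's theorem), for which propagation along the $x_1$-axis is an elementary direct estimate; gluing these local reductions recovers the global statement.
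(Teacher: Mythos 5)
Your proposal is correct and follows essentially the same route as the paper, which does not reprove this classical result but simply cites \cite{Duistermaat-Hormander1} (see also \cite{Hormander3}): the parametrix argument for the elliptic inclusion plus propagation via the positive-commutator estimate, with the microlocal conjugation to $D_{x_1}$ by an elliptic Fourier integral operator being precisely the reduction used in those references. The only caveat worth recording is that the statement assumes merely a real principal symbol rather than real principal type, so radial points are not excluded; there the claim is vacuous anyway, since the bicharacteristic stays in a single ray and $\WF(u)$ is conic, so your parenthetical restriction costs nothing.
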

Now suppose $M$ is a manifold with smooth non-characteristic boundary, $P\in \mathrm{Diff}^2(M)$, and $u\in \dot\SD'(M)$ is a solution of the boundary value problem
\begin{align}
	P u&=f \textrm{ in int($M$)}\\
	u&= u_0 \textrm{ on $\pd M$},\label{eq:general.bvp}
\end{align}
where $f\in \dot\SD'(M)$ and $u_0\in \SD'(\pa M)$. Here $\dot\SD'$ denotes the class of \emph{supported distributions}, as defined in \cite[Appendix~B]{Hormander3}.

Analogous results to Theorem \ref{thm:propagation.scalar.1} are then phrased using the compressed cotangent bundle $T_b^* M$, and the $b$-wavefront set $\WF_b(u)\subset T_b^* M$ of Melrose defined in \cite[Definition~18.3.25]{Hormander3} using the calculus $\Psi_b(M)$ of $b$-pseudodifferential, or totally characteristic operators.

We make the standing assumption that the bicharacteristics of $P$ meet $\partial (T^*M)$ to at most finite order, so that there is a uniquely defined compressed generalised bicharacteristic flow of $P$ \cite[Chapter.~18.3]{Hormander3} on $T_b^*M$. 

The natural map $\rho:T^*M \to T_b^* M $ is a bundle isomorphism over $\mathrm{int}(M)$ such that the pullback of the bicharacteristic flow in $T^*M$ is the compressed generalised bicharacteristic flow in $T_b^*M$ over $\mathrm{int}(M)$.

Points $\gamma\in T^*(\pa M)\setminus 0\subset T_b^* M|_{\pa M}$ are classified by the cardinality of $\rho^{-1}(\gamma)\cap \mathrm{char}(P)$. 
The elliptic set, hyperbolic set, and glancing set are respectively given by
\begin{align}
		\mathscr{E}&:=\{\gamma\in T^*(\pa M)\setminus 0:|\rho^{-1}(\gamma)\cap \mathrm{char}(P)|=0\}\\
		\mathscr{H}&:=\{\gamma\in T^*(\pa M)\setminus 0:|\rho^{-1}(\gamma)\cap \mathrm{char}(P)|=2\}	\label{eq:boundary.classification}\\
		\mathscr{G}&:=\{\gamma\in T^*(\pa M)\setminus 0:|\rho^{-1}(\gamma)\cap \mathrm{char}(P)|=1\}
\end{align}


We additionally assume $f\in\SN(M)$, defined in \cite[Definition~18.3.30]{Hormander3} as 
\begin{equation}
	\SN(M):=\{u\in \SA'(M):\WF_b(u)|_{\pa M}\subseteq T^*(\pa M)\}.
\end{equation}
where $\SA'(M)\subseteq \dot\SD'(M)$ is the class of supported distributions that act continuously on test functions $\phi\in\SC_c^\infty(M)$ with respect to the topology of the space of conormal distributions $I^m(M,\pa M)$
for all $m\geq -(n+2)/4$. 

\begin{thm}\label{thm:propagation.scalar.2}
	Let $M$ be a closed manifold and let $P\in\mathrm{Diff}^2(M)$ have real principal symbol $p\in S^2(T^*M)$, such that $\pa M$ is non-characteristic for $P$ and the bicharacteristics of $P$ meet $\pa (T^*M) $ to at most finite order.
	Let $u_0\in \SD'(\pa M)$, $f\in \mathcal{N}(M)$, and suppose that $u\in \dot \SD'(M)$ is a solution to \eqref{eq:general.bvp}.
	Then every $\gamma\in \WF_b(u)\setminus(\WF_b(f)\cup \WF(u_0))$ lies on a compressed generalised bicharacteristic of $P$, either above $\mathrm{int}(M)$ or in $\SCH\cup \SCG$. In either case, an open neighbourhood of $\gamma$ in the bicharacteristic also lies in $\WF_b(u)$.
\end{thm}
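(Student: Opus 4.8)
The plan is to recognise Theorem~\ref{thm:propagation.scalar.2} as the classical Melrose--Sj\"{o}strand propagation theorem for the mixed Dirichlet problem \cite{MR82i:35172} (see also \cite[Chapter~24]{Hormander3}) and to organise the argument as a sequence of microlocal reductions, the interior part being immediate from Theorem~\ref{thm:propagation.scalar.1}. Fix $\gamma\in\WF_b(u)\setminus(\WF_b(f)\cup\WF(u_0))$. If $\gamma$ lies over $\mathrm{int}(M)$, then under the isomorphism $\rho$ it corresponds to a point of $\WF(u)\setminus\WF(f)$ contained in $\mathrm{char}(P)$, so Theorem~\ref{thm:propagation.scalar.1} (applied locally near that point) puts $\gamma$ on an integral curve of $H_p$ in $\mathrm{char}(P)$ whose intersection with $\WF(u)$ is open, and pulling back by $\rho$ gives the interior case of the conclusion. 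It remains to treat $\gamma$ lying over $\partial M$, which by definition of $\WF_b$ on the boundary means $\gamma\in T^*(\partial M)\setminus 0$; here the hypothesis $f\in\mathcal{N}(M)$ is used precisely to ensure $\WF_b(f)|_{\partial M}\subseteq T^*(\partial M)$, so that the set $\WF_b(f)\cup\WF(u_0)$ we are permitted to excise is also contained in $T^*(\partial M)$.

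First I would eliminate the elliptic region. If $\gamma\in\mathscr{E}$, then the boundary value problem \eqref{eq:general.bvp} is microlocally elliptic at $\gamma$, and one constructs a microlocal parametrix in the totally characteristic calculus $\Psi_b(M)$ for the map $w\mapsto(Pw,w|_{\partial M})$ near $\gamma$; this is the boundary elliptic regularity underlying the $b$-wavefront calculus of \cite[Chapter~18.3]{Hormander3}, and it gives $\gamma\notin\WF_b(u)$ as soon as $\gamma\notin\WF_b(f)\cup\WF(u_0)$. Hence every boundary point of $\WF_b(u)\setminus(\WF_b(f)\cup\WF(u_0))$ lies in $\mathscr{H}\cup\mathscr{G}$, as asserted.

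Next, over a hyperbolic point $\gamma\in\mathscr{H}$ the fibre $\rho^{-1}(\gamma)\cap\mathrm{char}(P)$ consists of two points, carried by a transversally incoming and a transversally reflected bicharacteristic, and since $\partial M$ is non-characteristic one has the standard reflected geometric-optics parametrix across the boundary (as in \cite[Chapter~24]{Hormander3}): microlocally $u$ is an incoming Lagrangian distribution plus its reflection, the Dirichlet trace determining the reflection coefficient. Thus near $\gamma$ the set $\WF_b(u)$ is a union of compressed generalised bicharacteristics, each incoming-then-reflected, and the openness statement follows by applying the interior propagation on each leg together with the matching of the two legs at $\gamma$.

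The main obstacle is the glancing region $\mathscr{G}$, where the ray is tangent to $\partial M$, no parametrix is available, and the compressed generalised bicharacteristic through $\gamma$ --- well defined and unique thanks to the standing finite-order-of-contact assumption --- may glide along the boundary; this is exactly the difficult part of Melrose--Sj\"{o}strand. Here I would invoke the positive-commutator / approximate-energy argument of \cite{MR82i:35172}: one tests $Pu=f$ against a suitable family of operators in $\Psi_b(M)$ whose symbols are transported by the generalised flow and whose commutators with $P$ are controlled using the glancing (finite-contact) structure, concluding that $\WF_b(u)\setminus(\WF_b(f)\cup\WF(u_0))$ is locally invariant under the compressed generalised bicharacteristic flow near $\gamma$. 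Assembling the interior propagation, the elliptic exclusion, the hyperbolic reflected parametrix, and the glancing estimate yields the theorem; indeed, as the hypotheses and the conclusion are stated for a scalar operator $P$, Theorem~\ref{thm:propagation.scalar.2} is literally this classical result, recorded here as the scalar template for the vector-bundle extension carried out in the remainder of the section.
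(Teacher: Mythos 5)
Your proposal is correct and takes essentially the same route as the paper, which likewise treats Theorem \ref{thm:propagation.scalar.2} as the classical Melrose--Sj\"ostrand result: the paper gives no independent proof but cites \cite{lax.nirenberg}, \cite{Taylor1}, \cite{melrose.sjostrand}, \cite{melrose.gliding} and \cite[Chapter~24]{Hormander3}, organised by the same interior/elliptic/hyperbolic/glancing case division that you use (and that the paper then revisits when extending to the bundle setting in Propositions \ref{prop:elliptic.case}--\ref{prop:diffractive.case}). The only caveat is your phrasing in the hyperbolic case that ``$u$ is an incoming Lagrangian distribution plus its reflection'': the solution need not be Lagrangian, and the precise classical statement is a wavefront-set assertion obtained either from the reflection parametrix applied to a microlocalisation of $u$ or, as in \cite[Theorem~24.2.1]{Hormander3}, from the factorisation $P=(D_1-\Lambda)(D_1+\Lambda)$; this is a harmless imprecision in a sketch that defers to the literature.
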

For $\gamma$ above $\mathrm{int}(M)$, Theorem \ref{thm:propagation.scalar.2} reduces to Theorem \ref{thm:propagation.scalar.1}. For $\gamma$ above $\pa M$, the situation is more delicate, and is divided into cases according to whether the compressed generalised bicharacteristic curve through $\gamma$ corresponds to a reflected, grazing, or gliding bicharacteristic curve in $T^*M$ upon pulling back by the natural map $ T^* M \to T_b^*M$.

The case of reflected singularities was treated in \cite{lax.nirenberg}, see also \cite{taylor.reflection}.
The case of grazing singularities was treated in \cite{Taylor1} in the case of second order tangency and in \cite{melrose.sjostrand} more generally. The case of gliding singularities is treated in \cite{melrose.gliding}. A detailed treatment of all cases can be found in \cite[Chapter~24]{Hormander3}.

Theorem \ref{thm:propagation.scalar.1} and Theorem \ref{thm:propagation.scalar.2} can be extended to a broad class of second-order differential operators on vector bundles, including those of this paper. Theorem \ref{thm:propagation.scalar.1} is explicitly stated for systems in \cite[Theorem.~2.1]{MR82i:35172}, whilst a bundle-valued version of Theorem \ref{thm:propagation.scalar.2} follows from the proof of the scalar result \cite[Chapter.~24]{Hormander3} with only minor modifications. It is convenient to formulate this result for operators acting on half-densities. 
\begin{prop}
\label{prop:propagation1}
Let $P\in\mathrm{Diff}^2(M;E\otimes \Omega^{1/2})$, where $E$ is a rank $m$ Hermitian vector bundle over $M$, a smooth manifold with boundary. 
Suppose further that
\begin{enumerate}\label{eq:propagation.hypotheses}
	\item $P$ has real and scalar principal symbol 
	\item $P-P^*$ has scalar principal symbol.
	\item $\pa M$ is noncharacteristic with respect to $P$.
\end{enumerate}	
Let $u\in\mathcal{N}(M;E\otimes \Omega^{1/2})$ be a solution of the boundary value problem \eqref{eq:general.bvp}, where $f\in \mathcal{N}(M;E\otimes \Omega^{1/2})$ and $u_0\in \SD'(\pd M;E\otimes \Omega^{1/2})$.
Then every $\gamma\in \WF_b(u)\setminus(\WF_b(f)\cup \WF(u_0))$ lies on a compressed generalised bicharacteristic of $P$ , either above $\mathrm{int}(M)$, or in $\SCH\cup \SCG$. In either case, an open neighbourhood of $\gamma$ in the bicharacteristic also lies in $\WF_b(u)$.
\end{prop}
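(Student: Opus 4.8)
The plan is to reduce the statement to the interior propagation theorem together with three local model situations at a boundary point --- the elliptic, hyperbolic and glancing cases --- and, in each, to verify that H\"{o}rmander's scalar argument from \cite[Chapter~24]{Hormander3} survives the passage to the bundle setting once the calculus $\Psi_b$ of $b$-pseudodifferential operators is tensored with $\End(E)$. The guiding observation is that hypotheses (1)--(2) make both $P$ and $P-P^*$ have scalar leading-order structure, so the characteristic variety, the compressed generalised bicharacteristic flow, and the test symbols used in the positive-commutator estimates are exactly the scalar ones; the $\End(E)$-valued part of $P$ enters only through genuinely lower-order terms. First I would dispose of $\gamma$ lying above $\mathrm{int}(M)$: there $\WF_b(u)=\rho_*\WF(u)$, $P$ is of real principal type in the systems sense (with scalar real principal symbol $p$), and \cite[Theorem~2.1]{MR82i:35172} gives the invariance of $\WF(u)$ modulo $\WF(f)$ along the Hamilton curves of $p$, which coincide with the compressed generalised bicharacteristics over $\mathrm{int}(M)$. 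It then remains to work near $\gamma\in T^*(\pa M)\setminus 0$; passing to boundary normal coordinates $(x',x_n)$ with $\pa M=\{x_n=0\}$ and dividing by the coefficient of $D_{x_n}^2$ (scalar by (1), invertible by (3)), one may assume
\[
P=D_{x_n}^2+R(x,D_{x'})+\bigl(\text{first-order terms with }\End(E)\text{-valued coefficients}\bigr),
\]
where $R$ has scalar real principal symbol $r(x,\xi')$, and $\SCE,\SCH,\SCG$ are distinguished by $r(\gamma)>0$, $r(\gamma)<0$, and $r(\gamma)=0\neq dr(\gamma)$.

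On $\SCE$ microlocal elliptic regularity applies verbatim: the principal symbol $(\xi_n^2+r(x,\xi'))I_m$ is invertible near $\gamma$, so one builds a matrix-valued microlocal parametrix in $\Psi_b$ and gets $\gamma\notin\WF_b(u)$ modulo $\WF_b(f)\cup\WF(u_0)$. On $\SCH$ one factors $P$ microlocally, modulo smoothing, as a product of two first-order operators $D_{x_n}-\Lambda_\pm(x,D_{x'})$ with scalar real principal symbols $\mp\sqrt{-r}$ and $\End(E)$-valued lower-order parts; propagation for each first-order factor is the classical systems result, and the boundary condition $u=u_0$ together with (3) couples the incoming and outgoing traces to give the usual reflection law --- this is the bundle-valued version of \cite{lax.nirenberg}, \cite{taylor.reflection}.

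The main obstacle is the glancing region $\SCG$, treated by the Melrose--Sj\"{o}strand positive-commutator construction \cite{melrose.sjostrand} (with the second-order-tangency subcase as in \cite{Taylor1}) and the gliding analysis \cite{melrose.gliding}; here one must check that this machinery survives tensoring with $\End(E)$. The key point is that it suffices to run the scalar argument with the \emph{scalar} test operators $C=A^*A\in\Psi_b$ of the scalar proof (acting fibrewise on $E$), microsupported along a short arc of the compressed generalised bicharacteristic through $\gamma$: both Melrose's normal form for a pair of glancing hypersurfaces and the resulting weights depend only on the scalar symbols $r$ and $\xi_n$. For such $C$ one has $\tfrac{1}{i}(PC-CP^*)=\tfrac{1}{i}[P,C]+\tfrac{1}{i}C(P-P^*)$, whose principal symbol is $\{p,\sigma(C)\}+\tilde\rho\,\sigma(C)$, where $\tfrac{1}{i}\sigma_1(P-P^*)=\tilde\rho\,I_m$; \emph{by hypothesis (2)} the function $\tilde\rho$ is real and scalar, which is exactly what lets it be absorbed into the weight along the bicharacteristic by a scalar integrating factor, precisely as in \cite[Chapter~24]{Hormander3} (were the skew-Hermitian part of the subprincipal symbol of $P$ non-scalar, one would be forced to track the polarisation, which is not needed for $\WF_b$). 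Since $C$ is scalar, every subprincipal or lower-order term of $\tfrac{1}{i}(PC-CP^*)$ is an $\End(E)$-valued operator of lower order with scalar leading symbol, so it is controlled by the sharp G{\aa}rding inequality and the usual bootstrap over weaker norms exactly as its scalar analogue. With these observations the choice of weights, the commutator inequalities and the propagation of $\WF_b(u)$ up to and along $\SCG$ go through unchanged. Assembling the elliptic, hyperbolic, glancing and interior cases --- the compressed generalised bicharacteristic flow being well defined by the standing finite-order-contact hypothesis, and the hypothesis $u,f\in\mathcal{N}$ ensuring $\WF_b$ has no component normal to $\pa M$ --- yields the asserted invariance of $\WF_b(u)\setminus(\WF_b(f)\cup\WF(u_0))$.
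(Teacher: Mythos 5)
Your proposal is correct and follows essentially the same route as the paper: dispose of interior points via the systems propagation theorem of \cite{MR82i:35172}, then reduce the boundary cases to H\"{o}rmander's Chapter~24 analysis in the elliptic, hyperbolic, diffractive and gliding regions, using hypotheses (1)--(2) to guarantee that every operator entering the factorisations and positive-commutator identities is principally scalar, with the sharp G\aa rding inequality for systems absorbing the matrix-valued lower-order remainders. The one step the paper makes explicit that you elide is the normalisation $A_1=0$ of the $\End(E)$-valued coefficient of the normal derivative $D_{x_1}$, achieved by conjugating $P$ by the solution $B$ of the matrix ODE $D_1B=\tfrac12 A_1B$, $B|_{\{x_1=0\}}=\Id$ (a section of $U(m)\otimes\RR^+$ precisely because hypothesis (2) makes $A_1-A_1^*$ scalar, so the conjugated operator still satisfies the hypotheses); leaving the first-order normal term in place, as you do, means the hyperbolic factorisation with purely tangential $\Lambda_\pm$ and the boundary terms $B_{jk}$ in the diffractive commutator identity acquire extra matrix-valued contributions that you would have to track by hand.
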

A special case of Proposition $\ref{prop:propagation1}$ is the following result, which is the one we make direct use of in this paper.

\begin{cor}
	\label{cor:propagation}
	Let $(M,g)$ be of the form \eqref{eq:productform},\eqref{eq:metricform}, and let $E$ be a rank $N$ Hermitian vector bundle over $M$ equipped with a compatible connection $\nabla$.
	Let $$P=\Box+L\in\mathrm{Diff}^2(M;E)$$
	where $L\in\mathrm{Diff}^1(M;E)$ has the property that $\sigma_1(L-L^*)\in S^1(T^*M;\End(E))$ is scalar. 
	Let $u\in\mathcal{N}(M;E)$ be a solution of the boundary value problem \eqref{eq:general.bvp} for $P$, where $f\in \mathcal{N}(M;E)$ and $u_0\in \SD'(\pd M;E)$.
Then every $\gamma\in \WF_b(u)\setminus(\WF_b(f)\cup \WF(u_0))$ lies on a compressed generalised bicharacteristic of $P$ , either above $\mathrm{int}(M)$, or in $\SCH\cup \SCG$. In either case, an open neighbourhood of $\gamma$ in the bicharacteristic also lies in $\WF_b(u)$.
\end{cor}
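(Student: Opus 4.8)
The statement is a direct corollary of Proposition \ref{prop:propagation1}: the plan is to pass to half-densities and then verify the three hypotheses of that proposition for $P=\Box+L$. To pass to half-densities, fix the positive smooth half-density $\mu=(dV_g)^{1/2}$ trivialising $\Omega^{1/2}$ and let $\tilde P$ be the operator on $\SC^\infty(M;E\otimes\Omega^{1/2})$ defined by $\tilde P(v\mu)=(Pv)\mu$. The map $v\mapsto v\mu$ is an isomorphism $\SC^\infty(M;E)\to\SC^\infty(M;E\otimes\Omega^{1/2})$ that carries the pairing \eqref{eq:L2.def} to the canonical $L^2$ pairing on half-densities, preserves $b$-wavefront sets, and does not alter principal symbols; hence $\tilde P$ has the same principal symbol as $P$, $\tilde P-\tilde P^*$ corresponds to $P-P^*$, and it suffices to check the hypotheses of Proposition \ref{prop:propagation1} for $P$ itself. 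Proposition \ref{prop:propagation1} applied to $\tilde P$, transported back along $\tilde u=u\mu$ (and correspondingly for $f$ and $u_0$), then yields the conclusion for $u$.

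The first two hypotheses are immediate. By \eqref{eq:tensor.laplacian} the principal symbol of $\Box=\nabla^*\nabla$ at $(x,\xi)\in T^*M\setminus 0$ is $|\xi|_{g(x)}^2\,\Id_{E_x}$, which is real-valued and scalar, and adding $L\in\mathrm{Diff}^1(M;E)$ does not change the order-$2$ symbol, so $\sigma_2(P)$ is real and scalar. The defining relation \eqref{eq:duality} for $\nabla^*$ shows $\Box$ is formally self-adjoint with respect to \eqref{eq:L2.def}, so $\Box-\Box^*=0$ and hence $P-P^*=L-L^*$, whose principal symbol is scalar by assumption.

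It remains to check that $\partial M$ is noncharacteristic. In the coordinates $(t,x)$ of \eqref{eq:metricform} with dual coordinates $(\tau,\xi')$ one has $\sigma_2(P)=c(t,x)^{-1}(-\tau^2+|\xi'|_{g_0(t,x)}^2)\,\Id_E$. The spacelike caps $\{\pm T\}\times M_0$ have conormal a nonzero multiple of $dt$, on which $\sigma_2(P)$ equals $-c^{-1}\Id_E\neq 0$; the lateral boundary $\Sigma=(-T,T)\times\partial M_0$ has conormal $\nu^\flat$ tangent to the $t$-slices, hence spacelike, with $\sigma_2(P)(\nu^\flat)=c^{-1}|\nu^\flat|_{g_0}^2\,\Id_E\neq 0$. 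Thus $\partial M$ is noncharacteristic, and together with the standing assumption of this section that bicharacteristics meet $\partial(T^*M)$ to at most finite order (which holds under (H4) in the setting of Theorem \ref{thm:main}), all hypotheses of Proposition \ref{prop:propagation1} are met. This argument is a verification rather than a genuinely new one; the only points requiring care are the bookkeeping in the half-density conjugation — to confirm that the condition that $P-P^*$ have scalar principal symbol survives — and the noncharacteristic computation on the lateral boundary $\Sigma$, both of which follow directly from the product form \eqref{eq:metricform}.
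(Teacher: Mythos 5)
Your proof is correct and follows essentially the same route as the paper: conjugate $P$ to an operator on half-densities (your $\tilde P(v\mu)=(Pv)\mu$ with $\mu=(dV_g)^{1/2}$ is exactly the paper's $G^{1/2}PG^{-1/2}$ in local coordinates) and then verify the hypotheses of Proposition \ref{prop:propagation1}. Your version is merely more explicit about the noncharacteristic check on $\partial M$ and more precise than the paper's remark that $\tilde P$ is ``formally self-adjoint'' (which holds only for the $\Box$ part; the correct statement, as you give, is that $\tilde P-\tilde P^*$ corresponds to $L-L^*$ and hence has scalar principal symbol).
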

\begin{proof}
	To the operator $P$, we can associate the operator $\tilde P$ acting on half densities by
	\begin{equation}
		\tilde P (u |dx|^{1/2}):=G^{1/2}P(G^{-1/2}u ) \, |dx|^{1/2}
	\end{equation}
	in local coordinates, where $G=|\det g|^{1/2}$. The operator $\tilde{P}$ is then formally self-adjoint and so clearly satisfies the hypotheses of Proposition \ref{prop:propagation1}.
\end{proof}
In the remainder of this section we discuss the proof of Proposition \ref{prop:propagation1}. For $\gamma$ above $\mathrm{int}(M)$, we can directly apply \cite[Theorem.~2.1]{MR82i:35172}, so we need only consider $\gamma$ above $\pa M$. For such $\gamma$, we check that the assumptions on $P$ allow us to reduce to the proof in the scalar case that is contained in \cite[Chapter~24]{Hormander3}. In this text, the proof is divided into the cases according to whether $\gamma$ is elliptic, hyperbolic, or glancing. The glancing region $\SCG$ is the most subtle, and is subdivided into diffractive and non-diffractive covectors. 
As the results are microlocal in nature, we can without loss of generality fix a choice of local coordinates for $M$ and local trivialisation for $E$ from the outset. Elements of $\Psi(M;E)$ (resp. $\Psi_b(M;E)$) can then be locally identified with $m\times m$ matrices of operators in $\Psi(\RR^n)$ (resp. $\Psi_b(\ol{\RR_+^n})$), and the matrix-valued principal symbols in $S^k(\RR^{2n};\CC^{m\times m})$ (resp. $S^k(\RR_+^n\times \RR^n;\CC^{m\times m})$) can be taken entrywise.

 We work in the local coordinates introduced in \cite[Section~C.5]{Hormander3} generalising boundary normal coordinates. In these coordinates $(x_1,\ldots,x_n)\in U \subset \RR^n$, where $x_1$ is a boundary-defining function for $\partial M$ and the interior of $M$ is given by $U\cap \{x_1 > 0\}$. The local trivialisation of $E$ over $U$ is chosen to be given by a orthonormal frame, shrinking $U$ if necessary.

From \cite[Lemma~C.5.3]{Hormander3}, up to sign, the operator $P$ takes the form
\begin{equation}
	P= D_1^2- R_2(x,D')-\sum_{j=1}^n A_j(x)D_j - R_0(x)
\end{equation}
where the coefficients $A_j,R_0$ lie in $\mathcal{C}^\infty(U;\CC^{m\times m})$ and $R_2$ is a smooth matrix of tangential differential operators with $R_2(x,\xi')$ homogeneous of degree $2$ in $\xi'$. From the fact that the trivialisation is orthonormal, it follows that the hypotheses on $P$ in Proposition \ref{prop:propagation1} are equivalent to the assertion that $\sigma(R_2)$ is real and scalar, and each $A_j-A_j^*$ is scalar.

In fact, we can assume without loss of generality that $A_1=0$. To see this, we conjugate $P$ by a smooth endomorphism $B\in \SC^\infty(U;\CC^{m\times m})$. The corresponding coefficient of $D_1$ in the operator $B^{-1}P B$ is 
\[2B^{-1}D_1 B-B^{-1}A_1B\]
which vanishes if we take $B$ to be the solution of the ODE $$D_1 B=\frac12 A_1B,\quad  B|_{\{x_1=0\}}=\mathrm{Id}. $$ This solution $B$ is a section of the general unitary bundle $$GU(m)=U(m)\otimes \RR^+,$$ as $iA_1\in\mathfrak{gu}(m)=\mathfrak{u}(m)\otimes \RR$. It follows that the conjugated operator $B^{-1}PB$ still satisfies the hypotheses of Proposition \ref{prop:propagation1}, and $v=B^{-1}u$ locally solves \eqref{eq:general.bvp} with $v$ replacing $u$ and $B^{-1}f$ replacing $f$. The conclusion of the proposition for $v$ will then imply the conclusion for $u$, as $B$ is smooth and invertible.

Summing up, we can assume that \begin{equation}
	P=D_1^2-R(x,D')
\end{equation}
where $R$ is a matrix of tangential differential operators of order $2$ such that $R$ has a real and scalar principal symbol $r$, and $R-R^*$ has a scalar principal symbol.

We note that if $A,B$ are principally scalar pseudodifferential operators acting on sections of a vector bundle with orders $k,l$ respectively, then $[A,B]$ is also principally scalar with principal symbol $[\sigma(A),\sigma(B)]\in S^{k+l-1}$. As only the principal and subprincipal symbols of $P$ play any role in the proof of \cite[Theorem~24.5.3]{Hormander3}, it will follow that only principally scalar operators show up when generalising to the present bundle setting.
The proof is divided into three cases based on the partitioning \eqref{eq:boundary.classification}.

The elliptic region $\mathscr{E}$ is given in local coordinates by \[\mathscr{E}=\{(x,\xi):x_1=0, r(x,\xi') < 0\}.\] In this region, \cite[Theorem~20.1.14]{Hormander3} applies directly, and gives

\begin{prop}
	\label{prop:elliptic.case}
If $u$ solves the boundary value problem \eqref{eq:general.bvp}, then
\begin{equation}
\label{eq:elliptic.case}
	\mathscr{E}\cap \WF_b(u)\subseteq \mathscr{E}\cap(\WF_b(f)\cup \WF(u_0)).
\end{equation}
\end{prop}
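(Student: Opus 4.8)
The plan is to treat this as a microlocal elliptic regularity statement up to the boundary, localised near an arbitrary covector $\gamma\in\mathscr{E}$. In the local coordinates and orthonormal trivialisation in which $P=D_1^2-R(x,D')$, membership $\gamma\in\mathscr{E}$ is precisely the condition $r(x,\xi')<0$ in a conic neighbourhood $\Gamma$ of $\gamma$; since $r$ is real this gives $\xi_1^2-r(x,\xi')\gtrsim|\xi|^2$ there for every $\xi_1$, so $P$ is microlocally elliptic near $\gamma$, uniformly up to $\{x_1=0\}$. As \eqref{eq:elliptic.case} is microlocal in nature, it suffices to show that $\gamma\notin\WF_b(u)$ whenever $\gamma\notin\WF_b(f)\cup\WF(u_0)$.

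First I would construct, microlocally near $\gamma$, the standard factorisation of a second-order elliptic operator whose characteristic roots in $\xi_1$ avoid the real axis: $P=(D_1-i\Lambda)(D_1+i\Lambda)+S$ modulo a smoothing operator, where $\Lambda$ is a first-order tangential pseudodifferential operator with principal symbol $\sqrt{-r}$ --- real, positive, and, crucially, scalar, by the hypothesis that $R$ has scalar principal symbol --- and $S$ is smoothing. The full symbol of $\Lambda$ is produced by the usual recursion, and the observation already recorded in this section --- that products and commutators of principally scalar operators remain principally scalar --- guarantees that every operator entering the construction is principally scalar, so the computation is formally identical to the scalar one in \cite[Chapter~20]{Hormander3}. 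The factor $D_1-i\Lambda$ is the decaying one on $\{x_1>0\}$: the forward Dirichlet problem for it admits a Poisson-type parametrix $K$ mapping $\SD'(\pa M;E)$ into $\dot\SD'(M;E)$ with $\WF_b(Kv)\subseteq\WF(v)$, while the remaining factor, together with an interior parametrix $Q$ of the elliptic operator $P$ obeying $\WF_b(Qf)\subseteq\WF_b(f)$ by pseudolocality, absorbs the source term.

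Combining these ingredients yields, microlocally over $\Gamma$, a representation $u=Qf+Ku_0$ modulo a smooth remainder, whence $\WF_b(u)\cap\Gamma\subseteq(\WF_b(f)\cup\WF(u_0))\cap\Gamma$; since $\gamma\in\mathscr{E}$ was arbitrary, \eqref{eq:elliptic.case} follows. Equivalently, one may simply invoke the microlocal elliptic boundary regularity theorem \cite[Theorem~20.1.14]{Hormander3}, observing that in our setting its proof manipulates only principally scalar operators and that the Dirichlet condition automatically satisfies the relevant coercivity (Lopatinski--Shapiro) condition. I do not expect a genuine obstacle here: this is the simplest of the three cases in the partition \eqref{eq:boundary.classification}, and the entire content of the vector bundle extension is the bookkeeping point that $R$ being principally scalar forces $\Lambda$, $Q$ and $K$ to be principally scalar as well, so no symbolic analysis beyond the scalar theory is needed.
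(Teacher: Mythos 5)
Your proposal is correct and ultimately lands exactly where the paper does: the paper's entire proof is the direct invocation of \cite[Theorem~20.1.14]{Hormander3}, which you also offer as the punchline. The extra factorisation sketch (with $\Lambda$ principally scalar of symbol $\sqrt{-r}$ and $D_1-i\Lambda$ as the decaying factor) is sound but not needed, since the cited theorem already applies in this setting without modification.
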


The hyperbolic region $\mathscr{H}$ is given in local coordinates by \[\mathscr{H}=\{(x,\xi):x_1=0, r(x,\xi') > 0\}.\] In this region we make use of the following lemma.

\begin{lem}\label{lem:factorisation.hyperbolic}
	Let $(0,\xi_0')\in\mathscr{H}$. Then there exist operators $\Lambda=\Lambda(x,D')\in \Psi^1(\RR^{n-1};\CC^m)$ and $B_{\infty}\in\Psi^{-\infty}(\RR^{n-1};\CC^m)$ smoothly dependent on the parameter $x_1$ such that
	\begin{equation}
		P=(D_1-\Lambda)(D_1+\Lambda)+B_{\infty}
	\end{equation}
	microlocally near $(0,\xi_0')$. 
\end{lem}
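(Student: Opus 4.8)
The plan is to carry out, for the normal form $P=D_1^2-R(x,D')$ obtained above, the classical microlocal factorisation used in the scalar theory (cf.\ \cite[Chapter~24]{Hormander3}), while checking that the reduction to principally scalar symbols removes the only obstruction that could come from non-commutativity. Since $(0,\xi_0')\in\mathscr{H}$, the principal symbol $r$ of $R$ satisfies $r(0,\xi_0')>0$, hence $r>0$ on a conic neighbourhood $\Gamma$ of $(0,\xi_0')$ (in the $(x_1,x',\xi')$ variables, conic in $\xi'$); I would fix a symbol $\chi\in S^0$ equal to $1$ near $(0,\xi_0')$ and supported in $\Gamma$, to be used to localise every construction below.

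The starting point is the algebraic identity
\[
(D_1-\Lambda)(D_1+\Lambda)=D_1^2+[D_1,\Lambda]-\Lambda^2=D_1^2-\Lambda^2+\tfrac{1}{i}\partial_{x_1}\Lambda,
\]
valid for any operator family $\Lambda=\Lambda(x_1;x',D')$ depending smoothly on the parameter $x_1$, since $[D_1,\Lambda]=\tfrac1i\partial_{x_1}\Lambda$. So the lemma reduces to producing $\Lambda\in\Psi^1(\RR^{n-1};\CC^m)$, smooth in $x_1$ and microsupported in $\Gamma$, with
\[
\Lambda^2-\tfrac{1}{i}\partial_{x_1}\Lambda\equiv R \pmod{\Psi^{-\infty}}
\]
microlocally near $(0,\xi_0')$; the residual smoothing operator is then $B_\infty$.

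I would solve this by the usual symbol recursion. Writing $\Lambda=\Op(\lambda)$ with $\lambda\sim\sum_{j\ge 0}\lambda_{1-j}$, $\lambda_{1-j}\in S^{1-j}(\RR^{n-1};\CC^{m\times m})$, and expanding $\sigma(\Lambda^2)\sim\sum_\alpha\tfrac{1}{\alpha!}\partial_{\xi'}^\alpha\lambda\, D_{x'}^\alpha\lambda$, the order-$2$ part of the equation is $\lambda_1^2=r$, which I would solve by the \emph{scalar} symbol $\lambda_1=\sqrt r\,\Id$ --- admissible precisely because $r>0$ on $\Gamma$ and $r$ is scalar. At order $1-j$ (for $j\ge 1$) the equation has the form
\[
\lambda_1\lambda_{1-j}+\lambda_{1-j}\lambda_1=F_j,
\]
with $F_j\in S^{1-j}$ an explicit expression in $R$ and the previously found $\lambda_1,\dots,\lambda_{2-j}$ together with their $x_1$-, $x'$- and $\xi'$-derivatives; since $\lambda_1=\sqrt r\,\Id$ is scalar and elliptic on $\Gamma$, the left-hand side is just $2\sqrt r\,\lambda_{1-j}$, so $\lambda_{1-j}=(2\sqrt r)^{-1}F_j$ is determined, and smooth in $x_1$ because $r$ is. Asymptotically summing the $\lambda_{1-j}$ and cutting off with $\chi$ gives $\Lambda\in\Psi^1$ with $\Lambda^2-\tfrac1i\partial_{x_1}\Lambda-R\in\Psi^{-\infty}$ near $(0,\xi_0')$, which is exactly the factorisation $P=(D_1-\Lambda)(D_1+\Lambda)+B_\infty$.

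The point needing care --- and the only place hyperbolicity is used --- is the solvability of the recursion, which rests on $\lambda_1$ being an invertible symbol. Taking $\lambda_1=\sqrt r\,\Id$ is possible because $r>0$ (hyperbolicity at $(0,\xi_0')$) and $r$ is scalar (the principal-symbol hypothesis on $R$); with this choice every subsequent stage is the trivially solvable equation $\lambda_1 X+X\lambda_1=2\sqrt r\,X=F_j$. Were $\sigma_2(R)$ not scalar one would instead have to invert the linear map $X\mapsto\lambda_1 X+X\lambda_1$ on matrices, and this is precisely where a naive bundle generalisation could fail; the lower-order symbols $\lambda_{1-j}$ are genuinely matrix-valued but cause no trouble. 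I would also note that all identities here are microlocal near $(0,\xi_0')$, the cutoff $\chi$ ensuring $\Lambda\in\Psi^1(\RR^{n-1};\CC^m)$ and $B_\infty\in\Psi^{-\infty}(\RR^{n-1};\CC^m)$ as genuine operators, while the factorisation itself is valid only on a conic neighbourhood of $(0,\xi_0')$, as asserted.
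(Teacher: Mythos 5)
Your proposal is correct and follows essentially the same route as the paper: an inductive symbol construction for $\Lambda$ with leading term $\sqrt{r}\,\Id$, where each correction is obtained by dividing by $2\sqrt{r}$, which is exactly where hyperbolicity ($r>0$ near $(0,\xi_0')$) and the scalar principal symbol of $R$ are used. The only cosmetic difference is that you organise the recursion at the level of the full symbol of $\Lambda$ (via the equation $\Lambda^2-\tfrac1i\partial_{x_1}\Lambda\equiv R$), whereas the paper iteratively corrects the operator-level error $P-(D_1-\Lambda_k)(D_1+\Lambda_k)$; these are the same computation.
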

\begin{proof}
	Let $\chi(x,\xi')$ be a smooth cutoff to a small neighbourhood of $(0,\xi_0')$ so that $r \geq  \epsilon\ang{\xi'} $ on $\spt(\chi)$ for some $\epsilon > 0$.
	By a direct computation, we have 
\begin{equation}\label{eq:principal.factorisation}
		\Op(\chi)P=\Op(\chi)\left(D_1-\Op\sqrt{r(x,\xi')}\right)\left(D_1+\Op\sqrt{r(x,\xi')}\right)+B_{-1}
\end{equation}
	for some $B_{-1}\in \Psi^1(\RR^{n-1};\CC^m)$ smoothly dependent on the parameter $x_1$. 
	We now inductively choose \[c_j\in S^{-j}(\RR^{n}\times \RR^{n-1};\CC^m)\] so that the operator
	\begin{equation}
		\Lambda_k=\Op\left(\sqrt{r(x,\xi')}+\sum_{j=0}^k c_j\right)
	\end{equation}
	satisfies
	\begin{equation}
		\label{eq:induction.factorisation}
		\Op(\chi)(P-(D_1-\Lambda_k)(D_1+\Lambda_k))=B_k\in \Psi^{-k}(\RR^{n-1};\CC^m)
	\end{equation}
	for every $k \geq -1$, with $B_k$ smoothly dependent on the parameter $x_1$.
	Indeed, the case $k=-1$ is trivial from \eqref{eq:principal.factorisation}, and if we have selected $c_j\in S^{-j}(\RR^{n}\times\RR^{n-1};\CC^m)$ for $0\leq j \leq k$ such that \eqref{eq:induction.factorisation} holds, then for arbitrary $c_{k+1}\in S^{-k-1}(\RR^n\times\RR^{n-1};\CC^m)$, we have
	\begin{align}
		& P-(D_1-\Lambda_{k+1})(D_1+\Lambda_{k+1})\\
		&= P-(D_1-\Lambda_{k}-\Op(c_{k+1}))(D_1+\Lambda_{k}+\Op(c_{k+1}))\\
		&= B_k-[D_1,\Op(c_{k+1})]+\Lambda_k \Op(c_{k+1})+\Op(c_{k+1})\Lambda_k
	\end{align}
	which has principal symbol $b_k+2\sqrt{r(x,\xi')}c_{k+1}\in S^{-k}(\RR^n\times\RR^{n-1};\CC^m)$. Taking 
	\[c_{k+1}=-\frac{b_k}{2\sqrt{r(x,\xi')}}\]
	then establishes \eqref{eq:induction.factorisation} with $k$ replaced by $k+1$. We can then take $\Lambda=\Op(\sqrt{r}+c)$, where $c\in S^{0}(\RR^n\times\RR^{n-1};\CC^m)$ is a formal resummation of the $c_j$.
\end{proof}

\begin{prop}
\label{prop:hyperbolic.case}
	If $u$ solves the boundary value problem \eqref{eq:general.bvp}, and $\gamma=(x',\xi')\in (\mathscr{H}\cap \WF_b(u))\setminus (\WF_b(f)\cup \WF(u_0))$, then an open neighbourhood of $\gamma$ in the compressed generalised bicharacteristic through $\gamma$ lies in $\WF_b(u)$.
\end{prop}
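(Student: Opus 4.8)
The plan is to reduce Proposition~\ref{prop:hyperbolic.case} to the classical scalar reflection result (\cite[Chapter~24.5]{Hormander3}; see also \cite{lax.nirenberg,Taylor1,taylor.reflection}), the only new ingredient being that the first--order operators produced by the factorisation are matrix valued but have \emph{scalar} principal symbol, for which first--order propagation of singularities is unchanged. I work microlocally in a small conic neighbourhood $\Gamma$ of $\gamma=(0,\xi_0')\in\mathscr H$ on which $r>0$, in the local normal form $P=D_1^2-R(x,D')$, and I use the microlocal factorisation $P=(D_1-\Lambda)(D_1+\Lambda)+B_\infty$ from Lemma~\ref{lem:factorisation.hyperbolic}, where $B_\infty$ is smoothing and $\sigma_1(\Lambda)=\sqrt r>0$ on $\Gamma$. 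Since $r>0$ on $\Gamma$, each factor $D_1\mp\Lambda$ has real scalar principal symbol $\xi_1\mp\sqrt r$, the coefficient of $D_1$ being the identity, so the Cauchy problem for $D_1\mp\Lambda$ from $\{x_1=0\}$ into $\{x_1>0\}$ is microlocally well posed near $\gamma$. Over $\Gamma$, $\mathrm{char}(P)$ splits into the two sheets $S_\pm=\{\xi_1=\pm\sqrt r\}$; under the Hamilton flow of $\sigma_2(P)$ the normal coordinate $x_1$ is strictly decreasing on $S_-$ and strictly increasing on $S_+$, and $\rho$ identifies $S_-\cap\{0<x_1<\epsilon\}$ and $S_+\cap\{0<x_1<\epsilon\}$ with, respectively, the incoming and the outgoing half of the compressed generalised bicharacteristic through $\gamma$ near $\gamma$.

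Next I would construct the incoming/outgoing splitting of $u$ near $\gamma$, following the scalar argument. Set $w=(D_1+\Lambda)u$; from $Pu=f$ and the factorisation, $(D_1-\Lambda)w\equiv f$ microlocally on $\Gamma$ modulo $\SC^\infty$. A parametrix for $D_1-\Lambda$ gives $w\equiv w_f+w_0$ with $\WF(w_f)\subseteq\WF_b(f)$ and $(D_1-\Lambda)w_0\equiv0$, $\WF(w_0)\subseteq S_+$; then solving $(D_1+\Lambda)u\equiv w$ with a parametrix for $D_1+\Lambda$ (which is elliptic off $S_-$) yields $u\equiv u_++u_-$ on $\Gamma$ with $\WF(u_+)\subseteq S_+\cup\WF_b(f)$, and $(D_1+\Lambda)u_-\equiv0$, $\WF(u_-)\subseteq S_-$. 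By construction each of $u_+$ and $u_-$ is, near $\gamma$, the $x_1$--flow--out of its own trace on $\{x_1=0\}$, and $u_+|_{x_1=0}+u_-|_{x_1=0}\equiv u|_{x_1=0}=u_0$ microlocally near $\gamma$.

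I would then invoke first--order propagation of singularities for $D_1\mp\Lambda$. Since $\sigma_1(D_1\mp\Lambda)$ is scalar and real, the only difference from the scalar theorem is the matrix--valued subprincipal term, which does not move the wavefront set; the systems statement follows from \cite[Theorem~2.1]{MR82i:35172} (or from a positive commutator argument as in \cite{Hormander3}). Thus, over $\{x_1>0\}$ near $\gamma$, $\WF(u_-)\setminus\WF_b(f)$ consists of the bicharacteristics of $\xi_1+\sqrt r$ flowing out of $\WF(u_-|_{x_1=0})$ (lying in $S_-$), and $\WF(u_+)\setminus\WF_b(f)$ of the bicharacteristics of $\xi_1-\sqrt r$ flowing out of $\WF(u_+|_{x_1=0})$ (lying in $S_+$). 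Because $\gamma\notin\WF(u_0)$ the two traces have the same wavefront set near $\gamma$, and because $\gamma\in\WF_b(u)$ with $u\equiv u_++u_-$ modulo $\SC^\infty$ near $\gamma$, it follows that $\gamma\in\WF(u_+|_{x_1=0})=\WF(u_-|_{x_1=0})$, so both flow--outs are non--empty near $\gamma$. Pulling these two ray segments back by $\rho$ and gluing them at $\gamma$ produces an open neighbourhood of $\gamma$ in the compressed generalised bicharacteristic through $\gamma$ contained in $\WF_b(u)$, which is the assertion.

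I expect the main obstacle to be the bookkeeping of the second step: building the microlocal parametrices for the two first--order factors at the hyperbolic point, controlling the error terms, and identifying $\WF_b(u)$ near $\gamma$ with the union of the traces and $x_1$--flow--outs of $u_\pm$. All of this is classical in the scalar case, and since $R$ and the $A_j$ are principally scalar --- so that $\Lambda$, the commutators arising in the proof of Lemma~\ref{lem:factorisation.hyperbolic}, and the transport operators along the bicharacteristics are all principally scalar --- the scalar arguments of \cite[Chapter~24.5]{Hormander3} carry over with scalar symbols replaced by their matrix analogues.
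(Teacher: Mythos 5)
Your proposal is correct and follows essentially the same route as the paper, which simply invokes the factorisation of Lemma \ref{lem:factorisation.hyperbolic} and observes that the proof of \cite[Theorem~24.2.1]{Hormander3} (incoming/outgoing splitting via the principally scalar first-order factors $D_1\mp\Lambda$, propagation for each factor, and matching of the traces at $x_1=0$) carries over verbatim; you have merely written out the details of that argument.
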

\begin{proof}
	The proof of \cite[Theorem~24.2.1]{Hormander3} carries over directly, using Lemma \ref{lem:factorisation.hyperbolic} to factorise $P$ as in \cite[(24.2.5)]{Hormander7}.
\end{proof}

The diffractive region is given by
\begin{equation}
	\SCG_d:=\{(x,\xi):x_1=0,r(x,\xi')=0,\pa_{x_1}r(x,\xi) > 0\}.
\end{equation}

\begin{prop}
\label{prop:diffractive.case}
	If $u$ solves the boundary value problem \eqref{eq:general.bvp} and $\gamma=(x',\xi')\in (\mathscr{G}_d\cap \WF_b(u))\setminus (\WF_b(f)\cup \WF(u_0))$, then an open neighbourhood of $\gamma$ in the compressed generalised bicharacteristic through $\gamma$ lies in $\WF_b(u)$.
\end{prop}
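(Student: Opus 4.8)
The plan is to transplant the proof of the diffractive case in \cite[Section~24.4]{Hormander3} (see also \cite{Taylor1}, \cite{melrose.sjostrand}) to the principally scalar matrix setting, in the same spirit as the proof of Proposition \ref{prop:hyperbolic.case}. As in the reduction carried out before Proposition \ref{prop:elliptic.case}, we work in the local normal form
\[
	P = D_1^2 - R(x,D'),
\]
where $R$ is a matrix of tangential differential operators of order $2$ with real and scalar principal symbol $r(x,\xi')$, and $R-R^*$ has scalar principal symbol. At a diffractive point $\gamma=(0,x_0',\xi_0')\in\mathscr{G}_d$ we have $r=0$ and $\pa_{x_1}r>0$, so the compressed generalized bicharacteristic through $\gamma$ is tangent to $\{x_1=0\}$ at $\gamma$ and lies over $\mathrm{int}(M)$ throughout a punctured neighbourhood of $\gamma$; on that part $\WF_b(u)$ is already controlled by the interior result \cite[Theorem~2.1]{MR82i:35172}. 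It therefore suffices to prove the propagation statement across $\gamma$ itself, namely that $\gamma\in\WF_b(u)$ forces a full neighbourhood of $\gamma$ in the bicharacteristic to lie in $\WF_b(u)$.

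The first step is to reduce, microlocally near $\gamma$, to the diffractive (Airy-type) model operator, following the normal-form construction of \cite[Section~24.4]{Hormander3}. The canonical transformation involved is quantized by an elliptic Fourier integral operator, which in the bundle setting is the scalar one tensored with the identity on the fibre; hence $P$ is carried to $D_1^2 + x_1 A(x',D')^2 - B(x',D')$ plus an operator of order $\le 1$, with $A,B$ principally scalar and all error terms of strictly lower order. The structural observation that makes the scalar argument survive is the one recorded after \eqref{eq:boundary.classification}: the commutator of two principally scalar pseudodifferential operators is again principally scalar, so every operator produced in the course of the construction (the microlocal cutoffs, the parametrix for the model, and the weighted commutators underlying the propagation estimate) retains a scalar principal symbol, and the fibre structure enters only two orders below the leading one---exactly where the antisymmetric part $R-R^*$ already enters by hypothesis. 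Since only the principal and subprincipal symbols of $P$ appear in \cite[Section~24.4]{Hormander3}, the positive commutator (sharp G\aa rding) estimate at the heart of that argument goes through unchanged, the matrix lower-order terms being absorbed into the error.

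Granted these estimates, the conclusion follows as in the scalar case: combining the microlocal propagation estimate along the bicharacteristic with the elliptic estimate for the model operator transverse to it, and with the hypothesis $\gamma\notin\WF_b(f)\cup\WF(u_0)$, rules out $\gamma$ being an isolated point of the bicharacteristic in $\WF_b(u)$, and applying \cite[Theorem~2.1]{MR82i:35172} on the portions of the bicharacteristic over $\mathrm{int}(M)$ then gives the asserted openness. The step I expect to pose the main difficulty is the bookkeeping in the anisotropic Airy rescaling of \cite[Section~24.4]{Hormander3}, where several symbol classes $S^k$ are interwoven by the dilation adapted to the diffractive normal form: one must verify that a principally scalar perturbation of $P$ does not disrupt the interlocking of these classes. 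Since the perturbation is of lower order with scalar leading symbol, this reduces to a careful but routine tracking of orders, which completes the proof.
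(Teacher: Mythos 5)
Your underlying mechanism is the same as the paper's: run H\"ormander's positive commutator proof of the diffractive case, note that every operator arising has scalar principal symbol (commutators of principally scalar operators being again principally scalar), and invoke the sharp G\r{a}rding inequality for systems. Two points in your write-up need repair, however.

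First, the opening reduction to an Airy-type model $D_1^2+x_1A(x',D')^2-B(x',D')$ by an elliptic Fourier integral operator is not what \cite[Section~24.4]{Hormander3} does, and it is not needed. That section proves diffractive propagation directly from the energy identity
\begin{equation}
2\Im\ang{Pu,Qu}_X=\sum_{j,k=0}^1\ang{B_{jk}D_1^ku,D_1^ju}_{\pa X}+\sum_{j,k=0}^1\ang{C_{jk}D_1^ku,D_1^ju}_X,\qquad Q=Q_1D_1+Q_0,
\end{equation}
with no conjugation to a normal form; the Airy rescaling belongs to the parametrix constructions of Melrose and Taylor, and importing it would oblige you to redo that far heavier machinery for systems. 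Second, and more substantively, the claim that the fibre structure enters ``two orders below the leading one'' and that the matrix terms are ``absorbed into the error'' fails at the decisive step. In the identity above one has
\begin{equation}
\sum_{j,k=0}^1 c_{jk}(x,\xi')\xi_1^{j+k}=\{p,q\}-q\,\sigma(R-R^*)/i,
\end{equation}
and $q\,\sigma(R-R^*)/i$ has the \emph{same} order as $\{p,q\}$: it contributes to the very symbol whose strict negativity the sharp G\r{a}rding inequality must exploit, and is not an absorbable remainder. This is exactly why the standing hypothesis that $R-R^*$ (equivalently $L-L^*$) has scalar principal symbol is imposed; were that part a genuine matrix, the required sign condition would become a matrix inequality that cannot in general be arranged by rescaling $q$. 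Once you replace the Airy detour by the commutator identity and place the $\sigma(R-R^*)$ term at its correct order, your argument coincides with the paper's proof.
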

\begin{proof}
	The proof in the scalar case \cite[Theorem~24.4.1]{Hormander3} is based on a positive commutator estimate, using the identity
	\begin{equation}\label{eq:comm.identity.1}
		2\mathrm{Im}\ang{Pu,Qu}_X=\sum_{j,k=0}^1 \ang{B_{jk}(x',D')D_1^ku,D_1^ju}_{\pa X}+\sum_{j,k=0}^1\ang{C_{jk}(x,D')D_1^ku,D_1^ju}_X
	\end{equation}
	where $X=\ol{\RR_+^n}$, $u\in\SC_c^\infty(X)$, and where 
	\begin{equation}
		Q(x,D)=Q_1(x,D')D_1+Q_0(x,D').
	\end{equation}
	Here $Q_j$ is a pseudodifferential operator in $x'$ of order $-j$ and the formal adjoints satisfy $Q_1=Q_1^*$ and $Q_0-Q_0^*=[D_1,Q_1]$, so in particular $Q=Q^*$.
	We have $B_{11}=Q_1,B_{01}^*=B_{10}=Q_0,B_{00}=Q_1(R+R^*)/2$ for $x_1=0$, the principal symbol $c_{jk}$ of $C_{jk}$ is real of order $1-j-k$, $c_{01}=c_{10}$ and 
	\begin{equation}
		\sum_{j,k=0}^1 c_{jk}(x,\xi')\xi_1^{j+k}=\{p,q\}-q\sigma(R-R^*)/i
	\end{equation}
	where $p$ and $q$ are the principal symbols of $P$ and $Q$ respectively.

	In the proof of \cite[Theorem~24.4.1]{Hormander3}, the operator $Q$ is carefully constructed so that its principal symbol $q$ is decreasing along the generalised bicharacteristic flow, with an additional strict negativity condition on $H_pq$ that is exploited using the sharp G\r{a}rding inequality for systems.

	The identity \eqref{eq:comm.identity.1} also holds in the $\CC^m$-valued setting with the same choice of $Q$, using the natural inner product on $\CC^m$-valued functions. Every operator appearing in \eqref{eq:comm.identity.1} then has scalar principal symbol, and the remainder of the proof from the scalar case can be followed without modification.
\end{proof}

The remaining case of $\gamma\in \WF_b(u)\setminus (\WF_b(f))\cup \WF(u_0)\cup \mathscr{G}_d\cup \mathscr{H})$ is treated in the scalar setting in \cite[Section~24.5]{Hormander3}, and this proof similarly carries over to the bundle setting by replacing pseudodifferential operators in the proof with the scalar matrices of such operators.

\section{Observability estimate}
\label{sec:observability}
In this section, we prove the following observability estimate for the connection wave equation \eqref{eq:wave.eq.2}. 
\begin{prop}
	\label{prop:observability}
	Let $(M,g)$ be of the form \eqref{eq:productform},\eqref{eq:metricform}, and let $E$ be a rank $N$ Hermitian vector bundle over $M$ equipped with a compatible connection. Suppose further that $g$ satisfies hypotheses (H2)-(H4) of Theorem \ref{thm:main} and let $\tilde{g}$ be a smooth Lorentzian metric on $M$ that lies in a sufficiently small $\SC^2(M)$-neighbourhood of $g$.

	Let $P=\Box+L\in \mathrm{Diff}^2(M;E)$ where $\Box$ is the connection wave operator associated to $(\tilde g,\nabla)$ and $L\in \mathrm{Diff}^1(M;E)$ has the property that $\sigma_1(L-L^*)\in S^1(T^*M;\End(E))$ is scalar.

	Then for $\delta >0$ sufficiently small and $\chi\in\SC_c^\infty((-T,T_0)\times \pa M_0)$ with $\chi|_{(-T+\delta,T_0-\delta)\times \pa M_0}=1$, and any $s\geq 0$, we have 
\begin{equation}
	\|\phi\|_{H_0^{s+1}(M_0;E)\times H_0^{s}(M_0;E)}\lesssim \|\chi \nabla_\nu u\|_{H_0^s(\Sigma;E)}
\end{equation}
where $u=\mathcal{S}(0,0,\phi)$ is the unique solution to \eqref{eq:wave.eq.2} with source $F=0$, boundary data $f=0$, final data $(u,\nabla_\nu u)|_{t=T}=\phi$ and $\Gamma=(-T+\delta,T_0-\delta)\times \pa M_0$.
\end{prop}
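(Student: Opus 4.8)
The plan is to follow the microlocal defect measure approach of \cite{BLR}, as adapted to this curved product geometry in \cite[Proposition~6.1]{LO1}. It suffices to treat $s=0$: the general case follows by conjugating $u$ with a fixed power of an invertible elliptic operator on $M_0$ (equivalently, by carrying out the argument below with $H^s$--microlocal defect measures), which changes neither the type of $P$ nor the scalar-principal-symbol hypothesis on $L-L^*$ and shifts every Sobolev index by $s$. So the goal is $\|\phi\|_{H_0^1(M_0;E)\times L^2(M_0;E)}\lesssim\|\chi\nabla_\nu u\|_{L^2(\Sigma;E)}$ for $u=\mathcal{S}(0,0,\phi)$. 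Throughout I pass to half-densities as in Corollary \ref{cor:propagation}; the hypothesis that $\sigma_1(L-L^*)$ is scalar guarantees that every pseudodifferential operator produced by the microlocal analysis (square roots, commutators, sharp-G\r{a}rding corrections, the boundary factorisations of Section \ref{sec:propagation}) is principally scalar, so that the scalar arguments of \cite{BLR} and the propagation results of Section \ref{sec:propagation} apply entrywise and the bundle structure is inert.

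Arguing by contradiction, suppose no such constant exists: there are $\phi_k$ with $\|\phi_k\|_{H_0^1\times L^2}=1$ and $\|\chi\nabla_\nu u_k\|_{L^2(\Sigma)}\to0$, $u_k=\mathcal{S}(0,0,\phi_k)$. The energy estimate \eqref{eq:energy.main} bounds $(u_k)$ in $\SY_0$; after passing to a subsequence, $\phi_k\rightharpoonup\psi$ in $H_0^1\times L^2$, $u_k\rightharpoonup u:=\mathcal{S}(0,0,\psi)$ in $\SY_0$, and the sequence $u_k-u\rightharpoonup0$ carries a nonnegative microlocal defect measure $\mu$ on the compressed cotangent bundle $T_b^*M$, assembled from the energy density of $u_k-u$ and its boundary traces as in the boundary theory underlying \cite{BLR}. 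Since $P(u_k-u)=0$, propagation of singularities (Corollary \ref{cor:propagation}) shows $\mu$ is supported in $\mathrm{char}(P)$ and invariant under the compressed generalized bicharacteristic flow; and the boundary energy identity relating the Neumann trace on $\Sigma$ to the flux of $\mu$ across $\partial M$, together with $\chi\nabla_\nu(u_k-u)\to0$ in $L^2_{\mathrm{loc}}(\Sigma)$ (using the trace continuity from Proposition \ref{prop:forward} and the strong convergence $\chi\nabla_\nu u_k\to\chi\nabla_\nu u$), forces $\mu$ to vanish over the part of $T_b^*M$ above $\{\chi\equiv1\}\supseteq\Gamma:=(-T+\delta,T_0-\delta)\times\partial M_0$.

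The geometric heart of the proof is the verification of the geometric control condition: under hypotheses (H2)--(H4), every maximally extended compressed generalized bicharacteristic of $P$ in $M$ meets $\Gamma$. Since the metric is of product form, $t$ is strictly monotone along any causal geodesic and remains so under the reflections off the timelike hypersurface $\Sigma$ permitted by (H4) (which excludes infinite-order tangency and hence gliding rays), so each such bicharacteristic projects to a broken null geodesic running from $\{t=-T\}$ to $\{t=T\}$. Hypotheses (H2)--(H3) --- in particular the fact that the exterior $\SE_{p_0}$ of the double null cone of $p_0$ reaches $\partial M$ only inside $(-T,T_0)\times\partial M_0$, while the cones $J^{\pm}(p_0)$ cover all of $\{T\}\times M_0$ and of $\{-T\}\times M_0$ --- then force each such broken null geodesic to strike $\partial M_0$ at some time in $(-T,T_0)$, hence, after shrinking $\delta$, in $(-T+\delta,T_0-\delta)$. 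This is exactly the control step in \cite[Proposition~6.1]{LO1}, modelled on \cite{BLR}. Since $\mu$ is flow-invariant and vanishes over $\Gamma$ it vanishes identically; by \eqref{eq:energy.main} this upgrades $u_k-u\to0$ in $L^2$-gradient to $\phi_k\to\psi$ strongly in $H_0^1\times L^2$, so $\|\psi\|_{H_0^1\times L^2}=1$.

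It remains to derive a contradiction by showing the limiting solution $u=\mathcal{S}(0,0,\psi)$ vanishes identically. By construction $Pu=0$, $u|_\Sigma=0$, and $\nabla_\nu u=0$ on $\{\chi\equiv1\}$, which for $\delta$ small contains $\overline{\SE_{p_0}}\cap\partial M$ (a compact subset of $(-T,T_0)\times\partial M_0$ by (H3)). The unique continuation principle Proposition \ref{prop:UCP}, applied at $p=p_0$, then gives $u=0$ on $\SE_{p_0}$; since $\SE_{p_0}$ is open and its closure contains the timeslice through $p_0$ with only the point $p_0$ removed, continuity yields $(u,\nabla_t u)=0$ on that timeslice, and the uniqueness part of the (time-shifted) well-posedness result Proposition \ref{prop:forwardsmooth} propagates this to $u\equiv0$ on $M$, contradicting $\|\psi\|=1$. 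If one prefers to avoid invoking (H1) through Proposition \ref{prop:UCP}, the same vanishing should instead follow, as in the uniqueness argument of Proposition \ref{prop:forward}: near $\Gamma$ the full Cauchy data of $u$ vanishes to infinite order (using $Pu=0$), so $\WF_b(u)$ avoids the compressed bundle over $\Gamma$; as $\WF_b(u)$ is a union of maximal compressed generalized bicharacteristics, each meeting $\Gamma$ by the control step above, $\WF_b(u)=\emptyset$, whence $u\in\SC^\infty(M;E)$, and one then concludes $u\equiv0$ from its infinite-order vanishing along the boundary together with the causal structure. This uniqueness/unique-continuation step --- reconciling the control-theoretic input with the boundary geometry, and pinning down exactly which of the curvature and causality hypotheses it needs --- is the step I expect to be the main obstacle.
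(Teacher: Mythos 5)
Your route is genuinely different from the paper's. The paper does not argue by contradiction with microlocal defect measures: it reduces the observability estimate to a bundle-valued version of \cite[Theorem~3.3]{BLR} (Proposition \ref{prop:BLRobs}), i.e.\ an a priori microlocal $H^s$-regularity statement near $\{t=T\}$ with a lower-order remainder $c_2\|u\|_{H^{s-1}}$, proved by propagating $\WF_b$-regularity along compressed generalised bicharacteristics via Corollary \ref{cor:propagation}; the passage from that estimate to the observability inequality, including the verification of geometric control from (H2)--(H4), is then cited from \cite[Lemma~6.4]{LO1} and asserted to carry over verbatim. Your defect-measure scheme is a legitimate alternative in principle, but it swaps the paper's carefully established $\WF_b$-machinery (all of Section \ref{sec:propagation}) for a much larger black box: existence and flow-invariance of boundary defect measures up to the glancing set, and the flux identity tying the Neumann trace to the measure over $\Gamma$ --- which is delicate precisely at diffractive points. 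Note that the geometric control condition is stated in terms of \emph{nondiffractive} points of $T^*\Gamma$, which your control paragraph never addresses; also, (H4) does not ``exclude gliding rays'' --- finite-order contact only guarantees that the compressed generalised bicharacteristic flow is uniquely defined.

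The genuine gap is the terminal uniqueness step, which you correctly flag as the main obstacle but do not close. Your primary argument invokes Proposition \ref{prop:UCP}, which requires (H1); Proposition \ref{prop:observability} assumes only (H2)--(H4), so that hypothesis is not available, and a proof of observability that needs (H1) would not suffice here (Proposition \ref{prop:control}, which consumes this result, is likewise stated without (H1)). Your fallback --- $u$ smooth with Cauchy data vanishing to infinite order on $\Gamma$, ``together with the causal structure'', implies $u\equiv 0$ --- is not a valid inference: vanishing of smooth Cauchy data on a timelike portion of the boundary yields nothing in the interior without a genuine unique continuation theorem, and for $\SC^\infty$ time-dependent coefficients such continuation can fail (this is exactly the Alinhac obstruction the paper discusses, and the reason the pseudoconvex foliation behind (H1) is needed at all). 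The propagation-of-singularities half of your fallback only gives $\WF_b(u)=\emptyset$, i.e.\ smoothness, never vanishing. So your contradiction argument does not terminate. Since your proof is meant to be self-contained rather than a citation of \cite[Lemma~6.4]{LO1}, you must actually supply this uniqueness (or restructure so as not to need it, as the paper's estimate-with-remainder formulation in Proposition \ref{prop:BLRobs} is designed to allow); as written, the step is missing.
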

	The scalar version of this result was shown in \cite[Lemma~6.4]{LO1} to follow from \cite[Theorem~3.3]{BLR} (under the same geometric hypotheses (H2)-(H4)), with the essential point being that our geometric hypotheses (H2)-(H4) imply the geometric control condition for the set $\Gamma=(-T+\delta,T_0-\delta)$, that is, every bicharacteristic of $P$ passes through a nondiffractive point in $T^*\Gamma$.
	The details of this argument are unchanged in the bundle setting, and thus it suffices to prove the following bundle-valued version of \cite[Theorem~3.3]{BLR}.
\begin{prop}
	\label{prop:BLRobs}
Let $(M,g)$ be of the form \eqref{eq:productform},\eqref{eq:metricform}, and let $E$ be a rank $N$ Hermitian vector bundle over $M$ equipped with a compatible connection $\nabla$. 

	Let $$P=\Box+L\in \mathrm{Diff}^2(M;E)$$ where $\Box$ is the connection wave operator associated to $( g,\nabla)$ and $L\in \mathrm{Diff}^1(M;E)$ has the property that $\sigma_1(L-L^*)\in S^1(T^*M;\End(E))$ is scalar.
	Suppose that bicharacteristics of $P$ have finite order contact with $\partial(T^*M)$ and $\Gamma\subset (-T,T)\times M_0$ has the property that every compressed generalised bicharacteristic of $P$ passes through a nondiffractive point in $T^*\Gamma $.
	
	Then there is an $\ep >0$ such that for any $s\geq 1$ and any $u\in H^{s-1}(M;E)$ solving 
	\begin{align}
	P u&=0 \textrm{ in }(-T,T)\times M_0\\
	u&= 0 \textrm{ on $\Sigma=(-T,T)\times \pd M_0$}\label{eq:wave.eq.3.5}
\end{align}
	with $\nabla u|_{\Gamma}\in H^{s-1}(\Gamma;E\otimes T^*M)$, we have $u\in H^s((T-\ep,T)\times M_0;E)$ with the estimate 
	\begin{align}
		\label{eq:closed.graph.est}
	& c_1\|u\|_{H^s((T-\ep,T)\times M_0;E)}	\\
	& \leq \|\nabla_\nu u\|_{H^{s-1}(\Gamma;E)}+\|u\|_{H^s(\Gamma;E)}+c_2 \|u\|_{H^{s-1}((-T,T)\times M_0;E)}
	\end{align}
	for some $c_1 >0, c_2\geq 0$.
\end{prop}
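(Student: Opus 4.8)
The plan is to reprove the scalar observability theorem \cite[Theorem~3.3]{BLR} in the Hermitian bundle setting; the only place where the scalar nature of $P$ is genuinely used is the propagation of the associated microlocal defect measure, and the hypotheses on $P$ make that step go through verbatim with a now $\End(E)$-valued measure. The argument splits into a \emph{qualitative regularity} statement --- that any $u$ satisfying the hypotheses lies in $H^s((T-\ep,T)\times M_0;E)$ --- and then the estimate \eqref{eq:closed.graph.est}. For the regularity: from $Pu=0$, $u|_\Sigma=0$ and the prescribed first-order data $\nabla u|_\Gamma\in H^{s-1}(\Gamma;E\otimes T^*M)$ one gets that the $H^s$-graded $b$-wavefront set $\WF_b^s(u)$ is empty over a neighbourhood of $T^*\Gamma$ (elliptic directions by Proposition \ref{prop:elliptic.case}; hyperbolic directions via the factorisation of Lemma \ref{lem:factorisation.hyperbolic}; glancing directions by the analysis of Section \ref{sec:propagation}). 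Propagating this along the compressed generalised bicharacteristic flow --- well defined by the finite-contact hypothesis --- and using that by the geometric control condition every covector over $(-T,T)\times M_0$ lies on a bicharacteristic meeting a nondiffractive point of $T^*\Gamma$, the Sobolev analogue of Proposition \ref{prop:propagation1} gives $u\in H^s_{\mathrm{loc}}((-T,T)\times M_0;E)$; the energy estimate \eqref{eq:energy.main} then propagates this regularity up to the spacelike face $\{T\}\times M_0$.

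For \eqref{eq:closed.graph.est}, suppose it holds for no $c_1>0$, $c_2\geq 0$. Then there is a sequence $u_k\in H^{s-1}(M;E)$ of solutions of \eqref{eq:wave.eq.3.5} with $u_k|_\Sigma=0$, normalised so that $\|u_k\|_{H^s((T-\ep,T)\times M_0;E)}=1$ while $\|\nabla_\nu u_k\|_{H^{s-1}(\Gamma;E)}+\|u_k\|_{H^s(\Gamma;E)}+\|u_k\|_{H^{s-1}(M;E)}\to 0$. In particular $u_k\to 0$ in $H^{s-1}(M;E)$, so along a subsequence $u_k\rightharpoonup 0$ weakly in $H^s_{\mathrm{loc}}$. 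I would associate to $(u_k)$ its $H^s$ microlocal defect measure $\mu$: a nonnegative Hermitian $\End(E)$-valued Radon measure carried by the compressed characteristic set of $P$, with the boundary contributions over $\partial M$ treated as in \cite{BLR}. Then (i) $\mu\neq 0$, since the normalisation together with $u_k\to 0$ in $H^{s-1}$ forces $\mu$ to carry positive trace-mass over $(T-\ep,T)\times M_0$ (no mass escapes to the elliptic boundary face $\{T\}\times M_0$, by the energy estimate); (ii) $\operatorname{supp}(\mu)\subseteq\mathrm{char}(P)$, because $P$ has scalar principal symbol $p\,\Id$; and (iii) $\mu$ vanishes over $T^*\Gamma$, because $\nabla_\nu u_k\to 0$ in $H^{s-1}(\Gamma;E)$ and $u_k\to 0$ in $H^s(\Gamma;E)$.

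The heart of the matter is that $\mu$ is invariant under the compressed generalised bicharacteristic flow. In the interior, pairing $Pu_k=0$ against $\Op(a)u_k$ with $a$ a principally-scalar symbol produces, in the limit, a transport identity $H_p\mu+[c,\mu]=0$ on $\{p=0\}$, where the skew-Hermitian matrix $c$ is built from the principal symbol of $P-P^*$; by hypothesis that symbol is scalar, so $[c,\mu]=0$ and $\mu$ satisfies the pure transport equation $H_p\mu=0$ --- exactly as in the scalar case. At the boundary, the Dirichlet condition $u_k|_\Sigma=0$ and the scalar principal symbol of $P$ make the reflection of $\mu$ at hyperbolic points, and its behaviour at diffractive, grazing and gliding points, coincide with the scalar analysis of \cite{BLR}: the factorisation near hyperbolic points is Lemma \ref{lem:factorisation.hyperbolic}, and the positive-commutator and sharp G\r{a}rding arguments near glancing points involve only principally-scalar operators, as in the proof of Proposition \ref{prop:diffractive.case}. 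By the geometric control condition every compressed generalised bicharacteristic passes through a nondiffractive point of $T^*\Gamma$, where $\mu=0$ by (iii); invariance of $\mu$ then forces $\mu\equiv 0$ on $\mathrm{char}(P)$, contradicting (i). This proves \eqref{eq:closed.graph.est}, the whole argument being performed directly at the level of the $H^s$ defect measure for the given $s\geq 1$.

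The genuinely substantial part --- as in the scalar theorem of \cite{BLR} --- is the propagation and reflection of the defect measure up to and along $\partial M$ through the elliptic, hyperbolic, diffractive, grazing and gliding regimes; the interior transport and the deduction from the control condition are soft. The reduction to the bundle case rests throughout on the principle already used in Section \ref{sec:propagation}: under the present hypotheses every operator occurring in the construction (the factorisations, the commutator computations, the construction of the symbol of the test operator $Q$) is principally scalar, so the matrix structure is inert, and one systematically replaces $\CC$ by $\CC^N$ and absolute values by the trace of the matrix measure. Carrying this out line by line against \cite[Chapter~24]{Hormander3} and \cite{BLR}, and checking in particular that no genuinely matrix-valued subprincipal contribution enters the transport or reflection laws, is where the work lies.
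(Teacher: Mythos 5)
Your first paragraph already contains essentially the whole of the paper's proof: one shows, exactly as you describe, that every point of $T_b^*M\setminus 0$ over $\{T\}\times M_0$ is a point of microlocal $H^s$ regularity for $u$ --- elliptic interior and boundary points by Proposition \ref{prop:elliptic.case} and microlocal elliptic regularity, and characteristic/hyperbolic/glancing points by propagating the microlocal $H^s$ regularity obtained at a nondiffractive point of $T^*\Gamma$ (the bundle analogue of \cite[Theorem~2.2]{BLR}) along the compressed generalised bicharacteristic via Corollary \ref{cor:propagation}, after a cutoff-and-decomposition of the source to localise microlocally. At that point the paper simply applies the closed graph theorem: the estimate \eqref{eq:closed.graph.est} is stated with the compact error term $c_2\|u\|_{H^{s-1}}$ precisely so that it is the boundedness of the inclusion from the (complete) space of solutions normed by the right-hand side into $H^s((T-\ep,T)\times M_0;E)$, and the qualitative regularity statement is exactly what makes that inclusion everywhere defined. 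So the quantitative estimate is free once your first paragraph is in place.

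Your second and third paragraphs replace this one-line conclusion with a compactness--contradiction argument via matrix-valued $H^s$ microlocal defect measures. This is a legitimate alternative route in the scalar setting (it is how Burq--G\'erard-type proofs of observability under the geometric control condition proceed), and your observation that the scalar principal symbol of $P$ and of $P-P^*$ kills the commutator term in the transport equation is the right reason the matrix structure is inert in the interior. But be aware of what this buys and what it costs. It costs the entire theory of propagation of (matrix-valued) defect measures \emph{up to and along the boundary} --- reflection at hyperbolic points, and above all the behaviour at glancing and gliding points --- which is nowhere established in this paper, is substantially more delicate than the $\WF_b$ propagation of Section \ref{sec:propagation}, and cannot honestly be dismissed as line-by-line verification; it also requires care in your step (i), since the normalisation $\|u_k\|_{H^s((T-\ep,T)\times M_0)}=1$ only forces $\mu\neq 0$ after ruling out concentration of mass at the time slices $t=T-\ep$, $t=T$ and at the corner with $\partial M_0$, which your appeal to ``the energy estimate'' does not yet do. It buys nothing here, because the defect measure is being used only to convert a qualitative regularity statement into an a priori estimate with a compact error term, which is exactly the job of the closed graph theorem. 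I would therefore delete the defect-measure argument and close with the closed graph theorem, keeping paragraph one (fleshed out with the microlocal cutoff decomposition needed to combine the regularity at $q_1$ with Corollary \ref{cor:propagation}) as the proof.
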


\begin{proof}
We can follow the proof of \cite[Theorem~3.3]{BLR} in \cite{BLR}, replacing the Melrose--Sj\"{o}strand propagation of singularities result with the bundle-valued analogue Corollary \ref{cor:propagation} and \cite[Theorem~2.2]{BLR} with its bundle-valued analogue, which is remarked upon at the end of that proof.

For points of $T^*([-T,T]\times \pa M_0)$, we make use of the classification of boundary covectors as elliptic, hyperbolic, or glancing, as in \eqref{eq:boundary.classification}.

To show $u\in H^s((T-\ep,T)\times M_0;E)$, it is enough to show that the hypotheses imply that for any $p\in T_b^*M\setminus 0$ on the timeslice $\{T\}\times M_0$, $u$ is microlocally $H^s$ near $p$ in the sense that there exists a small conic neighbourhood $V \subseteq T_b^*M\setminus 0 $ of $p$ such that $\WF_b(Au)=\emptyset$ for all $A\in \Psi_b^0(M;E)$ with full symbol essentially supported in $V$.

For $p\in \mathrm{ell}(P)\subset T^*(\{T\}\times \mathrm{int}(M_0))$, this claim follows from the microlocal elliptic regularity theorem \cite[Theorem~18.1.28]{Hormander3}.
For $p\in \mathscr{E}$, $u$ is microlocally $H^s$ near $p$ from the microlocal elliptic boundary regularity theorem Proposition \ref{prop:elliptic.case}.

For $p\in \mathrm{char}(P)\cup \SCH \cup \SCG$ our assumptions imply that the unique compressed generalised bicharacteristic $\tilde\gamma$ through $p$ meets a non-diffractive point $q_1\in T^*\Gamma$ at some time $T_1 \in (-T,T)$.

By the bundle-valued analogue of \cite[Theorem~2.2]{BLR}, we have that $u$ is microlocally $H^s$ near $q_1$. Fix local coordinates $(x,y)$ for $M$ near $q_1$, where $x$ is a boundary defining function for $\pa M$, and fix a local trivialisation for $E$ by an orthonormal frame. We may assume that $q_1$ lies above the origin in the induced coordinate chart for $T_b^*M$. Let $a(y,\eta)\in S^0(T^*\RR^{n})$ be supported and equal to $1$ in a small conic neighbourhood of the covector corresponding to $q_1$.

Fixing $\phi\in\SC_c^\infty(\RR)$ supported and equal to $1$ near $0$, we consider the scalar matrix of tangential pseudodifferential operators $A(x,y,D_y)=\phi(x)a(y,D_y)\mathrm{Id}$, which has total symbol supported and equal to the $N\times N$ identity matrix in a conic neighbourhood of $T_b^*((T_1-\delta,T_1+\delta)\times M)\cap\tilde\gamma$ for sufficiently small $\delta >0$, in our  choice of local coordinates and local trivialisation.

Taking $\chi\in\SC^\infty(\RR)$ with $\spt(\chi)\subseteq \{t\geq T_1-\delta \}$ and $\chi|_{t\geq T_1}=1$, we consider the solution $U:=\chi u$ to 
\begin{align}
	P U&=F\\
	u&= 0 \textrm{ on $\Sigma=(-T,T)\times \pd M_0$}\label{eq:wave.eq.4}\\
	(u,\nabla_t u)&=0 \textrm{ on $\{T_1-\delta\}\times M_0$}
\end{align}
with source $F:=P(\chi u)=[P,\chi]u$. Writing $F=AF+(1-A)F=:F_1+F_2$ we can decompose $U=U_1+U_2$ as the sum of the solutions $U_j$ to \eqref{eq:wave.eq.4} with sources $F_j$. From the construction of $A$, we have that $\WF_b(f_2)\cap \tilde \gamma=\emptyset$, so by Corollary \ref{cor:propagation} we can conclude that $\WF_b(U_2)\cap \tilde\gamma=\emptyset$.

We exploit the microlocal regularity of $u$ near $q_1$ by writing
\begin{equation}
	F_1=AF=AP(\chi u)=A[P,\chi]u=[P,\chi]Au+[A,[P,\chi]]u.
\end{equation} 
The first term is a first order differential operator applied to $Au\in H^s$, and the second term is a zero-th order tangential pseudodifferential operator with total symbol essentially supported in a neighbourhood where $u$ has microlocal $H^s$ regularity. Hence $F_1\in H^{s-1}(M;E)$ and so an application of Proposition \ref{prop:forwardsmooth} (see Remark \ref{rem:general.source}) implies $U_1\in H^s(M;E)$.

We conclude that $u$ is microlocally $H^s$ near $p$, and an application of the closed graph theorem yields the estimate \eqref{eq:closed.graph.est}.
\end{proof}

\section{Gaussian beam construction}
\label{sec:goptics}
In this section, we carry out the construction of Gaussian beam solutions to the connection wave operator on Lorentzian manifolds. The classical construction of Gaussian beams for the scalar wave equation goes back to \cite{ralston.gaussian}. The generalisation of this construction to the connection wave operator on higher rank bundles is straightforward, however we shall present it here for the convenience of the reader. We follow the treatment in \cite{zeroth.order.wave}, where the construction of Gaussian beams is carried out in the setting of the scalar wave operator on a general Lorentzian manifolds. The proof of Lemma 8.1 can also be found in \cite[Lemma~2.4]{spyros.lauri.miika}.

Our setting is a rank $N$ Hermitian vector bundle $E$ over a Lorentzian manifold $(M,g)$ of the form \eqref{eq:productform},\eqref{eq:metricform}, equipped with a connection $\nabla$ satisfying \eqref{eq:conn.compat}. Further fixing a potential $V\in\SC^\infty(M;E) $ satisfying \eqref{eq:potential.compat}, we construct approximate solutions to 
\begin{equation}
	(\Box+V) u =0
\end{equation}
that concentrate on an inextendable null geodesic $\gamma$ of $M$, where $\Box$ is defined in \eqref{eq:box.def}, acting on $u\in\SC^\infty(M;E)$.

We begin by changing variables to Fermi coordinates around an interior segment of a null-bicharacteristic in $M$. These are a coordinate chart $(s=y_0,y_1,\ldots,y_n)$ in a tube $\ST=(s,y')\in (a,b)\times B(0,\delta)$ such that the coordinate representation of $\gamma$ is $(a,b)\times \{0\}$ and the metric tensor $g=\sum_{i,j=0}^{n} g_{ij}dy_i\otimes dy_j$ written in these coordinates satisfies
\begin{equation}
	\label{eq:fermi}
	g|_\gamma=2\, ds\otimes dy_1 + \sum_{j=2}^n dy_j \otimes dy_j 
\end{equation}
and $\pa_{y_k}(g^{ij})|_\gamma=0$ for each $k\in\{1,\ldots,n\}$. We refer the reader to \cite[Section~4.1,Lemma~1]{zeroth.order.wave} for a detailed construction of these coordinates in our setting. Abusing notation slightly, we shall also denote the pullback bundle over $\ST$ by $E$, and the pullback connection by $\nabla$.
We then take the Ansatz 
\begin{equation}\label{eq:ansatz}
	v_\lambda(s,y')=\chi(y')e^{i\lambda \phi(s,y')} a(s,y')
\end{equation}
where $\chi\in\SC_c^\infty(\RR^n)$ is equal to $1$ near zero and equal to zero outside $B(0,\delta/2)$.
The phase $\phi\in \SC^\infty(\ST)$ is assumed to be of the form
\begin{equation}\label{eq:formphi}
	\phi(s,y')=\sum_{j=0}^J \phi_j(s,y')
\end{equation}
with $\phi_j$ a homogeneous polynomial of degree $j$ in $y'$ with coefficients smooth in $s$, and the amplitude $a\in \SC^\infty(\ST;E)$ is assumed to be of the form 
\begin{equation}\label{eq:forma}
	a(s,y')=\sum_{k=0}^J  a_{k}(s,y')\lambda^{-k}
\end{equation}
 with 
 \begin{equation}
	a_k(s,y')=\sum_{j=0}^J a_{k,j}(s,y')
 \end{equation}
 where each $a_{k,j}$ is a homogeneous $E$-valued polynomial of degree $j$ in $y'$ with coefficients smooth in $s$. Using \eqref{eq:products}, we have
\begin{align}
	(\Box+V)(e^{i\lambda\phi}a)&=e^{i\lambda\phi}(\Box+V)a+\Box(e^{i\lambda \phi})a-2C(d(e^{i\lambda\phi}),\nabla a)\label{eq:P.ansatz}\\
	&= e^{i\lambda\phi}(\lambda^2\ang{d\phi,d\phi}_g-2i\lambda C(d\phi,\nabla a)+(Pa+a\Box \phi))
\end{align}
where $C$ denotes contraction of $T^*\ST \otimes T^*\ST$ using the metric tensor. The jets $\phi_j$ and $a_{k,j}$ along $\gamma$ are then determined inductively from \eqref{eq:P.ansatz} by asserting that the coefficients of each power of $\lambda$ in \eqref{eq:P.ansatz} vanish to order $J$ on $\gamma$.

The leading order term gives rise to the eikonal equation
\begin{equation}
	\label{eq:eikonal}
	\pa_{y'}^\alpha (\ang{d\phi,d\phi}_g)|_\gamma=0
\end{equation}
for $|\alpha|\leq J$. This is identical to the scalar case, and the construction of a $\phi=\sum_{j=0}^J \phi_j\in\mathcal{C}^\infty(\mathcal{T})$ satisfying \eqref{eq:eikonal} is carried out in \cite{zeroth.order.wave}.
The solution $\phi$ satisfies the three properties:
\begin{enumerate}
	\item $\mathrm{Im}( \phi ) \geq C|y'|^2\quad (C > 0)$;
	\item $\phi|_\gamma=0$;
	\item $\pa_{y_i} \phi|_\gamma=\delta_{1i}$.
\end{enumerate}

The first order term in $\lambda$ in \eqref{eq:P.ansatz} then gives rise to the transport equation 
\begin{equation}
	\label{eq:transport.1}
	\nabla_{y'}^\alpha (C(d\phi,\nabla a_0))|_\gamma=0
\end{equation}
for $|\alpha|\leq J$ and the lower order terms in $\lambda$ in \eqref{eq:P.ansatz} give rise to the transport equations
\begin{equation}
	\label{eq:transport.2}
	\nabla^\alpha_{y'}(2iC(d\phi,\nabla a_{k+1})+(P+\Box \phi)a_k)|_\gamma=0
\end{equation}
for $|\alpha| \leq J$. We shall see that \eqref{eq:transport.1} determines $a_0$, and then that \eqref{eq:transport.2} determines each subsequent $a_j$.

Taking $\alpha=0$ in \eqref{eq:transport.1}, we have the equation
\begin{equation}
	0=\sum_{i,j=0}^ng^{ij} (\pa_{y_i} \phi)(\nabla_{y_j} a_0) = \nabla_s a_0=\nabla_s a_{0,0}
\end{equation}
along $\gamma$, and so $a_{0,0}(s)$ satisfies a parallel transport equation. In particular, we can uniquely solve this equation for $s\in(a,b)$ with an arbitrary initial condition $a_{0,0}(s_0)=w\in E|_{\gamma(s_0)}$ where $s_0\in (a,b)$.

We proceed inductively by supposing that we have solved \eqref{eq:transport.1} up to order $m< J$ by choosing $a_{0,j}\in\mathcal{C}^\infty(\mathcal{T})$ appropriately for $j\leq m$, and showing that the equations \eqref{eq:transport.1} with $|\alpha|=m+1$ determine $a_{0,m+1}$ uniquely subject to the initial condition that $\nabla_{y'}^\alpha a_{0,m+1}(s_0,0)=0$ for all $|\alpha|=m+1$. 

To see this, we compute for $|\alpha|=m+1$ that
\begin{align}
	& \nabla_{y'}^\alpha \left(\sum_{i,j=0}^n g^{ij} (\pa_{y_i} \phi)(\nabla_{y_j} a_0)\right)\label{eq:transport.vanish}\\
	&=\sum_{\alpha_1+\alpha_2=\alpha}\sum_{i,j=0}^n c_{\alpha_1,\alpha_2 }\pa_{y'}^{\alpha_1}(g^{ij} \pa_{y_i} \phi)\nabla_{y'}^{\alpha_2}(\nabla_{y_j} a_0).
\end{align}
and consider the restriction to $\gamma$. The only terms involving at least $m+1$ derivatives landing on $a_0$ arise when $|\alpha_2|=m+1$ or $|\alpha_2|=m$. The term with $|\alpha_2|=m+1$ is of the form 
\begin{equation}
	\sum_{i,j=0}^n g^{ij}\pa_{y_i} \phi \nabla_{y'}^{\alpha}(\nabla_{y_j} a_0)|_\gamma =2(\nabla_{s}\nabla_{y'}^\alpha a_0+b_\alpha)|_\gamma 
\end{equation}
from condition (2), where $b_\alpha\in\SC^\infty(\gamma;E)$ is determined by $\nabla_{y'}^\beta a_0|_\gamma$ for $|\beta| \leq m$, $\phi$ and the coefficients of the connection $\nabla$. As such this $b_\alpha$ is already known.

Hence, \eqref{eq:transport.vanish} reduces to
\begin{equation}\label{eq:gaussian.system}
	2\nabla_s\nabla_{y'}^\alpha a_0+\sum_{|\beta|=m+1} A_\beta \nabla_{y'}^\beta a_0+b_\alpha
\end{equation}
along $\gamma$, where $b_\alpha\in\SC^\infty(\gamma;E)$ has been redefined and is determined by $\nabla_{y'}^\beta a_0|_\gamma$ for $|\beta| \leq m$, $\phi$, $g$, and the coefficients of the connection $\nabla$. The $A_\beta\in \SC^\infty(\gamma;\End(E))$ are also determined by $\phi,g,$ and the coefficients of the connection $\nabla$.

Using \eqref{eq:gaussian.system}, the equation $\eqref{eq:transport.1}$ then becomes a linear system of first order ODEs for $\{\nabla_{y'}^\alpha a_0(s,0):|\alpha|=m+1\}$ with smooth coefficients and smooth inhomogeneity, and so can be solved forward and backwards in time over the full interval $(a,b)$, subject to the initial conditions $\nabla_{y'}^\alpha a_0(s_0,0)=0$. This determines $a_{0,m+1}$ as claimed, and gives an inductive construction of $a_0$ by solving for $a_{0,j}$ one at a time.

The determination of $a_k$ for $k\geq 1$ from \eqref{eq:transport.2} is carried out inductively in $k$ and is essentially identical to the determination of $a_0$ from \eqref{eq:transport.1}. Indeed the additional term $(P+\Box \phi)a_k$ in \eqref{eq:transport.2} only contributes smooth inhomogeneities to the ODEs, which does not affect solvability. We omit the details for brevity.

\begin{lem}
	\label{lem:gaussian.beam}
	Suppose $\gamma:(a,b)\to M$ is an interior null geodesic segment as above and $\Omega\subset M$ is a subdomain with $\gamma(a),\gamma(b)\notin \ol\Omega$.

	Then for sufficiently small $\delta >0$, the function $v_\lambda$ defined in \eqref{eq:ansatz} with phase $\phi$ and amplitude $a$ constructed above (with any choice of initial condition $a(s_0,0)$), smoothly extended by $0$ in $\Omega$, satisfies the estimate
	\begin{equation}
		\|Pv_\lambda \|_{H^k(\Omega;E)}\leq C \lambda^{-K}
	\end{equation}
for $K=\frac{J+1}{2}+\frac{n}{4}-k-2$.
\end{lem}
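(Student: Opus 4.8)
The plan is to expand $P v_\lambda$ directly from \eqref{eq:P.ansatz}, use the order of vanishing along $\gamma$ of the coefficients of each power of $\lambda$ that was built into the eikonal and transport equations, and then convert each transverse vanishing order into a power of $\lambda^{-1/2}$ via a rescaled Gaussian integral. The genuine work — the inductive solvability of \eqref{eq:eikonal}, \eqref{eq:transport.1}, \eqref{eq:transport.2} — has already been done above, so what is left is bookkeeping, the only slightly delicate point being the arithmetic that shows the truncation tail is dominated by the leading term.

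First I would dispose of the cutoff $\chi$. Since $\chi$ is scalar and compactly supported in $y'$, the Leibniz rule gives $P v_\lambda=\chi\,P(e^{i\lambda\phi}a)+R_\lambda$, where $R_\lambda$ collects the terms in which at least one derivative hits $\chi$; these are supported in an annular region $\{r_0\le |y'|\le \delta/2\}$ with some $r_0>0$. On that region property (1) of $\phi$ gives $\mathrm{Im}(\phi)\ge C r_0^2>0$, so $|e^{i\lambda\phi}|\le e^{-C r_0^2\lambda}$ there; as $R_\lambda$ equals $e^{i\lambda\phi}$ times smooth functions growing only polynomially in $\lambda$ (the growth coming from derivatives of $e^{i\lambda\phi}$, whose phase is a polynomial in $y'$ hence bounded with all derivatives on the support of $\chi$), we get $\|R_\lambda\|_{H^k(\ST;E)}=O(\lambda^{-\infty})$. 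Since $\chi$ is supported in $B(0,\delta/2)\subset\{|y'|<\delta\}$, and the hypothesis $\gamma(a),\gamma(b)\notin\ol\Omega$ guarantees that the extension of $v_\lambda$ by zero is genuinely smooth, it then suffices to bound $\|\chi\,P(e^{i\lambda\phi}a)\|_{H^k(\ST;E)}$.

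Next I would expand. Substituting $a=\sum_{k=0}^J a_k\lambda^{-k}$ into \eqref{eq:P.ansatz} and collecting powers of $\lambda$ gives
\begin{equation*}
P(e^{i\lambda\phi}a)=e^{i\lambda\phi}\sum_{m=0}^{J+2}\lambda^{2-m}c_m(s,y'),
\end{equation*}
where each $c_m$ is a universal combination of $\langle d\phi,d\phi\rangle_g a_j$, $C(d\phi,\nabla a_j)$ and $(P+\Box\phi)a_j$. The eikonal equation \eqref{eq:eikonal} and the transport equations \eqref{eq:transport.1}, \eqref{eq:transport.2} say exactly that $\pa_{y'}^\alpha c_m|_\gamma=0$ for all $|\alpha|\le J$ and all $m\le J+1$, the only uncontrolled term being $c_{J+2}=(P+\Box\phi)a_J$, a fixed smooth function on $\ST$. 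Consequently, on the tube one has $|\pa_{y'}^\beta c_m(s,y')|\le C|y'|^{(J+1-|\beta|)_+}$ for $m\le J+1$, uniformly in $s$, and $|\pa_{y'}^\beta c_{J+2}|\le C$.

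Finally, the elementary Gaussian estimate: for $b\in\SC^\infty(\ST;E)$ with $|\pa_{y'}^\beta b|\le C|y'|^{(\ell-|\beta|)_+}$ uniformly in $s$, using $\mathrm{Im}(\phi)\ge C|y'|^2$ and the fact that every derivative of $e^{i\lambda\phi}$ costs at most a factor $\lambda$ while every derivative on $b$ lowers $\ell$ by one, the substitution $z=\sqrt\lambda\,y'$ yields
\begin{equation*}
\|e^{i\lambda\phi}\chi\,b\|_{H^k(\ST;E)}\le C\,\lambda^{\,k-\ell/2-n/4}\qquad(\lambda\ge1).
\end{equation*}
Applying this with $\ell=J+1$ to each term $m\le J+1$ gives the bound $C\lambda^{2-m+k-(J+1)/2-n/4}$, maximised at $m=0$; applying it with $\ell=0$ to the term $m=J+2$ gives $C\lambda^{-J+k-n/4}$, which is dominated by the $m=0$ bound because $-J\le\tfrac12(3-J)$ for $J\ge 0$ (this is the one inequality worth checking, and it is the crux of the degree count). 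Summing over $m$ and adding the estimate for $R_\lambda$ yields
\begin{equation*}
\|P v_\lambda\|_{H^k(\Omega;E)}\le C\,\lambda^{\,2+k-(J+1)/2-n/4}=C\,\lambda^{-K},\qquad K=\tfrac{J+1}{2}+\tfrac n4-k-2,
\end{equation*}
which is the assertion. There is no serious obstacle beyond this exponent arithmetic and the routine rescaled Gaussian integrals.
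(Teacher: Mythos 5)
Your proposal is correct and is exactly the standard Gaussian beam remainder estimate that the paper delegates to \cite[Lemma~2]{zeroth.order.wave}: expand $P(e^{i\lambda\phi}a)$ in powers of $\lambda$, use the eikonal/transport equations to get vanishing of order $J+1$ in $y'$ of every coefficient except the tail $(P+\Box\phi)a_J$, convert each order of transverse vanishing into $\lambda^{-1/2}$ by the rescaling $z=\sqrt{\lambda}\,y'$ together with $\mathrm{Im}(\phi)\gtrsim|y'|^2$, and check that the tail exponent $-J+k-n/4$ is dominated by the leading one $2+k-\tfrac{J+1}{2}-\tfrac n4$. Your exponent bookkeeping, the $O(\lambda^{-\infty})$ disposal of the cutoff terms, and the role of $\gamma(a),\gamma(b)\notin\ol\Omega$ in making the zero extension smooth all match the intended argument.
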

\begin{proof}
	
By construction, $a$ and $\phi$ satisfy \eqref{eq:eikonal},\eqref{eq:transport.1} and \eqref{eq:transport.2}, so we can carry out the proof in \cite[Lemma~2]{zeroth.order.wave} without modification. The condition $\gamma(a),\gamma(b)\notin \Omega$ ensures that for sufficiently small $\delta > 0$, the ends of the tube $\{a\}\times B(0,\delta)$ and $\{b\}\times B(0,\delta)$ lies outside $\Omega$, and so $v_\lambda|_{\ST}$ can indeed by smoothly extended by $0$ in $\Omega$.
\end{proof}

We are now ready to prove Lemma \ref{lem:goptics}. 
\begin{proof}[Proof of Lemma \ref{lem:goptics}]
We begin by smoothly extending $M_0$ to a larger manifold with boundary $\tilde{M}_0$, and smoothly extending $g,\nabla$ to$\tilde{M}_0\times [-T,T]$ such that the metric form \eqref{eq:metricform} holds for the extension.

By linearity, it suffices to treat the case where $c_1=1$ and $c_j=0$ for $j > 1$. Fixing a choice of $\omega\in L^+_p M$, we consider the a segment of the null geodesic through $p$ with momentum $\omega$. That is, we consider the projection $\gamma$ of a null bicharacteristic $\tilde\gamma:(a,b)\to T^*(\mathrm{int}(\tilde{M}_0))\times [-T,T]$ with $\tilde\gamma(s_0)=(p,\omega)$ for some $s_0\in (a,b)$. We also assume that the segment is large enough so that  $\gamma(a),\gamma(b)\in (\tilde{M}_0\setminus M_0)\times [-T,T] $. This is possible from assumptions (H2),(H3) of Theorem \ref{thm:main}, together with \eqref{eq:D.defn}.

Using Lemma \ref{lem:gaussian.beam} with $\Omega=M_0\times [-T,T]$, we let $v_\lambda$ be a Gaussian beam associated to $\gamma$, with $M$ (and hence $K$) large, and with the initial conditions $a_k(s_0,0)=\delta_{1k}$ imposed.

Fix a smooth function $\eta(t)$ that is increasing, and equal to $0$ on $[-T,T_1+\ep/2]$, and equal to $1$ on $[T_1+\ep,T]$, where $\ep>0$ is sufficiently small so that $\gamma \cap (\pa M_0\times [T_1-\ep,T_1+\ep])=\emptyset $.

Then defining $u_\lambda $ to be the solution to \eqref{eq:wave.eq} with boundary data $\eta v_\lambda |_{[-T,T_1 +\ep ]\times \pa M_0 } $ using Proposition \ref{prop:forwardsmooth}, we have 
$P(u_\lambda-v_\lambda)=-Pv_\lambda $. Hence by the continuous dependence on source term in Proposition \ref{prop:forwardsmooth}, there follows the estimate
\begin{equation}\label{eq:beam.est}
	\|u_\lambda -v_\lambda \|_{C^1([-T,T_1+\ep/2]\times M_0;E)} \leq C/\lambda
\end{equation}
with constant $C$ uniform in $\lambda$.

To conclude the proof, we note that $p$ corresponds to the point $(s_0,0)$ in the Fermi coordinate chart, and use \eqref{eq:ansatz} to compute
\begin{equation}\label{eq:beam.value}
	v_\lambda(s_0,0)=a(s_0,0)
\end{equation}
and
\begin{align}
	\nabla v_\lambda(s_0,0)&=i\lambda a(s_0,0)\otimes d\phi(s_0,0)+\nabla a(s_0,0)\\
	&=i\lambda a(s_0,0)\otimes dy_1 + \nabla a(s_0,0).\label{eq:beam.derivative}
\end{align}
Near $p$ along $\gamma$, we have $\dot\gamma(s)=\pa_s$ in Fermi coordinates and the Hamiltonian vector field is given by $2\eta_1\pa_s +2\xi\pa_{y_1}+2\sum_{j=2}^n \eta_j \pa_{y_j} $, where $(\zeta,\eta)$ are the cotangent coordinates dual to $(s,y)$. Hence we have $\tilde\gamma(s_0)=(p,\frac12 dy_1)$, that is $2\omega=dy_1$.

In our construction of $a$ following \eqref{eq:transport.2}, it was also shown that we could take arbitrary initial condition $a(s_0,0)$, in particular we can take $a(s_0,0)=E_1$ in terms of the local orthonormal frame for $E$ near $p$.
Inserting this information into \eqref{eq:beam.value}, \eqref{eq:beam.derivative}, and using \eqref{eq:beam.est} completes the proof.
\end{proof}

\section{Energy estimates}
\label{sec:direct}
In this section we derive the energy estimates that are used in the proof of well-posedness for the direct problem, that is Proposition \ref{prop:forwardsmooth}. These computations are analogous to the treatment of the scalar case in \cite{llt}, but are adapted to the present setting of the connection wave operator on a fixed trivial Hermitian vector bundle.
Let $$M=[0,T]\times M_0 $$
be a $(1+n)$ dimensional Lorentzian manifold with metric \eqref{eq:metricform}, and let $E$ be a rank $N$ Hermitian vector bundle over $M$ equipped with a compatible connection $\nabla$. Let $$P=\Box+L\in\mathrm{Diff}^2(M;E) $$where $L\in\mathrm{Diff}^1(M;E)$ is arbitrary and $\Box$ is the connection wave operator \eqref{eq:box.def.0} associated to $(g,\nabla)$.

We shall consider smooth solutions $u\in \SC^\infty(M;E)$ to the following connection wave equation
\begin{align}
	P u&=F\\
	u&= f \textrm{ on $(0,T)\times \pd M_0$}\label{eq:wave.eq.3}\\
	(u,\nabla_t u)&=\phi \textrm{ on $\{0\}\times M_0$.}
\end{align}
\begin{rem}
The equation \eqref{eq:wave.eq.3} differs from \eqref{eq:wave.eq.2} only by a reflection and translation in $t$, which has no impact on well-posedness.
\end{rem}
Introducing the notation $v(t)$ for the restriction of arbitrary $v\in \SC^\infty(M;E)$ to time $t$, we define the energy
\begin{equation}
	E(t):=\frac{1}{2}(\|u(t)\|_{L^2(M_0;E)}^2)+\||g^{00}|^{1/2}\nabla_t u(t)\|_{L^2(M_0;E)}^2+ \|\nabla_x u(t) \|_{L^2(M_0;E\otimes T^*M_0)}^2 ),
\end{equation}
where $g^{00}=\langle  dt,dt\rangle_g $, $\nabla_x$ denotes the (time-dependent) restriction of the connection $\nabla$ to $\pi^{-1}(\{t\}\times M_0)\subset E$, and $L^2(M_0;E\otimes T^*M_0)$ is defined by the bilinear pairing \eqref{eq:L2.def.forms} with $M$ replaced by $M_0$. We use the notation $dV$ and $dS$ to denote the volume densities on $M_0$ and $\pa M_0$ induced by $g$, suppressing the $t$-dependence of these forms, as well as the corresponding Sobolev space norms. 
The basic energy estimate is as follows.
\begin{prop}
\label{prop:energy.base}
	\begin{equation}
		E(t)\lesssim \|Pu\|^2_{L^2([0,t]\times M_0;E)}+ \|\nabla_t u\|_{L^2([0,t]\times \pa M_0 ;E)}^2+\|u\|^2_{L^2([0,t],H^1(\pa M_0;E))}+E(0)
	\end{equation}
	For arbitrary $X\in\SC^\infty(M;TM)$, we have 
	\begin{align}
		\|\nabla_X u\|^2_{L^2([0,t]\times \pa M_0;E)}&\lesssim \|Pu\|_{L^2([0,t]\times M_0;E)}^2+\|\nabla_t u\|^2_{L^2([0,t]\times\pa M_0;E)}\\
		&+\|u\|^2_{L^2([0,t],H^1(\pa M_0;E))}+E(0)
	\end{align}
\end{prop}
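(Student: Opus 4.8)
The plan is to run the classical multiplier (energy) method, adapted so that all derivatives are covariant, following the scalar treatment in \cite{llt}. The two inequalities are proven together: the flux of energy through the timelike boundary $\Sigma$ cannot be bounded by the prescribed data alone, so the normal-derivative (Rellich) bound must be established first, with $E(t)$ on its right-hand side, and then absorbed. Throughout, ``lower order'' will mean a quantity bounded pointwise by $C(|u|_E^2+|\nabla u|^2)$, whose integral over $[0,t]\times M_0$ is therefore $\lesssim\int_0^t E(s)\,ds$; such terms arise from the connection curvature $[\nabla_i,\nabla_j]\in\SC^\infty(M;\End(E))$, from $t$-derivatives of the coefficients $g^{ij},G$ and of the connection, and from the first-order operator $L$ (since $|Lu|\lesssim|u|_E+|\nabla u|$). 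The $\|u(t)\|_{L^2(M_0;E)}^2$ summand of $E(t)$ is likewise harmless: $\tfrac{d}{dt}\|u\|^2_{L^2(M_0;E)}=2\Re(\nabla_t u,u)_{L^2(M_0;E)}\le 2E(t)$, contributing only a lower-order error. All implicit constants depend on $T$, $g$, $\nabla$, $L$ but not on $u$.

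\emph{Step 1 (energy identity).} First I would pair $Pu$ fibrewise with $\nabla_t u$, take real parts, and integrate over the slab $[0,t]\times M_0$ against $dV_g$. Writing $\Box=\nabla^*\nabla$ in the divergence form \eqref{eq:tensor.laplacian} and integrating by parts in the spatial variables using the compatibility \eqref{eq:conn.compat} (a fibrewise version of \eqref{eq:div.thm}), the principal part reorganises as $\partial_t$ of the energy density plus a pure spatial divergence: the $\nabla_t^2$ contribution yields $\tfrac12\partial_t(|g^{00}|\,G\,|\nabla_t u|^2_E)$ (note $g^{00}=\langle dt,dt\rangle_g<0$, so $g^{00}\nabla_t=-|g^{00}|\nabla_t$), and the spatial part yields $\tfrac12\partial_t$ of the $\nabla_x u$ density plus a divergence whose flux through $\Sigma_t:=(0,t)\times\partial M_0$ is of the form $\int_{\Sigma_t}\Re\langle\nabla_\nu u,\nabla_t u\rangle_E$ (here \eqref{eq:metricform} is block diagonal, so $\nu$ is spacelike and the mixed $g^{0j}$ vanish). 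Everything else is lower order, so with $\Sigma_t=(0,t)\times\partial M_0$,
\begin{equation}
E(t)-E(0)=\int_0^t\Re(Pu,\nabla_t u)_{L^2(M_0;E)}\,ds+\int_{\Sigma_t}\Re\langle\nabla_\nu u,\nabla_t u\rangle_E\,dS\,ds+R_1,\qquad |R_1|\lesssim\int_0^t E(s)\,ds,
\end{equation}
and the source term is $\lesssim\|Pu\|^2_{L^2([0,t]\times M_0;E)}+\int_0^t E$ by Cauchy--Schwarz.

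\emph{Step 2 (Rellich identity).} To handle the flux term I would fix a smooth spatial vector field $X\in\SC^\infty(M;TM)$ with $X|_{\partial M_0}=\nu$, pair $Pu$ fibrewise with $\nabla_X u$, take real parts, and integrate over $[0,t]\times M_0$. The $\nabla_t^2$ part produces the boundary term $\big[\Re(\nabla_t u,\nabla_X u)_{L^2(M_0;E)}\big]_0^t$ (bounded by $E(t)+E(0)$), a bulk term $\lesssim\int_0^t E$, and a lateral term $\propto\int_{\Sigma_t}|\nabla_t u|^2_E$ since $g(X,\nu)=g(\nu,\nu)$ is constant and $g(X,\partial_t)=0$. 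The spatial second-order part, after integration by parts and using $X=\nu$ on $\partial M_0$, produces the ``good'' term $-c\int_{\Sigma_t}|\nabla_\nu u|^2_E$ with $c>0$, plus $\int_{\Sigma_t}|\nabla_{\mathrm{tan}}u|^2$ over tangential directions of $\Sigma_t$, namely $\partial_t$ and $\partial M_0$, so that $\int_{\Sigma_t}|\nabla_{\mathrm{tan}}u|^2\lesssim\|\nabla_t u\|^2_{L^2(\Sigma_t;E)}+\|u\|^2_{L^2((0,t);H^1(\partial M_0;E))}$, plus lower-order bulk terms. Isolating the good term gives
\begin{equation}
\|\nabla_\nu u\|^2_{L^2(\Sigma_t;E)}\lesssim E(t)+E(0)+\|Pu\|^2_{L^2([0,t]\times M_0;E)}+\int_0^t E(s)\,ds+\|\nabla_t u\|^2_{L^2(\Sigma_t;E)}+\|u\|^2_{L^2((0,t);H^1(\partial M_0;E))}.
\end{equation}

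\emph{Step 3 (coupling, Gr\"onwall, conclusion).} In Step 1 bound the flux term by $\tfrac{\varepsilon}{2}\|\nabla_\nu u\|^2_{L^2(\Sigma_t;E)}+\tfrac{1}{2\varepsilon}\|\nabla_t u\|^2_{L^2(\Sigma_t;E)}$ and substitute the inequality of Step 2 for $\|\nabla_\nu u\|^2_{L^2(\Sigma_t;E)}$. Choosing $\varepsilon$ small enough to absorb the resulting $\varepsilon C\,E(t)$ into the left side, one arrives at $E(t)\le C\big(E(0)+\|Pu\|^2_{L^2([0,t]\times M_0;E)}+\|\nabla_t u\|^2_{L^2(\Sigma_t;E)}+\|u\|^2_{L^2((0,t);H^1(\partial M_0;E))}\big)+C\int_0^t E(s)\,ds$, and Gr\"onwall's inequality gives the first estimate of Proposition \ref{prop:energy.base}. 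Feeding this bound for $E(t)$ (and for $\int_0^t E$) back into the inequality of Step 2 gives the second estimate for $X=\nu$; for general $X$ one splits $X|_{\partial M_0}$ into its $\nu$-component and a part tangent to $\partial M_0$, the latter contributing only $\|\nabla_t u\|^2_{L^2(\Sigma_t;E)}+\|u\|^2_{L^2((0,t);H^1(\partial M_0;E))}$. I expect the one genuinely delicate point to be precisely this coupling/absorption in Step 3 — recognising that the Rellich estimate must be proved before, and used inside, the energy estimate; the remainder is routine bookkeeping, the only bundle-specific ingredient being the product rule \eqref{eq:conn.compat} and the curvature commutator, neither of which changes the scalar argument in any essential way.
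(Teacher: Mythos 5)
Your proposal is correct and follows essentially the same route as the paper: an energy identity (multiplier $\nabla_t u$) whose lateral flux term is controlled by a Rellich identity (multiplier $\nabla_X u$ with $X|_{\partial M_0}=\nu$), followed by the $\varepsilon$-absorption of $E(t)$ and Gr\"onwall, with general $X$ handled by splitting off the tangential part. The bookkeeping of lower-order terms, the sign of the $\|\nabla_\nu u\|^2$ term in the Rellich identity, and the order of the coupling all match the paper's argument.
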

\begin{proof}
Differentiating in $t$, we compute  
\begin{equation}
	\pa_t (\|u(t)\|_{L^2(M_0;E)}^2)\lesssim E(t),
\end{equation}
\begin{align}
	\pa_t(\||g^{00}|^{1/2}\nabla_t u(t)\|_{L^2(M_0;E)}^2)&=2\Re \left(-\int_{M_0} \ang{\nabla_t^2 u(t),\nabla_t u(t)}_E g^{00}\, dV\right)\\&+O(E(t)),
\end{align}
and 
\begin{align}
	\pa_t (\|\nabla_x u(t) \|_{L^2(M_0;E)}^2)&= 2\Re \left(\int_{M_0}\ang{ \nabla_x u(t),\nabla_x\nabla_t  u(t)}_{E\otimes T^*M_0} \, dV \right)\\&+ O(E(t))\\
	&= 2\Re \Big(\int_{M_0} \ang{\nabla_x^*\nabla_x u(t),\nabla_t u(t)}_E dV\\&+\int_{\pa M_0}\ang{\nabla_\nu u(t),\nabla_t u(t)}_E\, dS\Big)+O(E(t)).\\
\end{align}
From \eqref{eq:tensor.laplacian}, it follows that $P$ is equal to $-g^{00}\nabla_t^2+\nabla_x^*\nabla_x$ up to terms of order at most $1$.
Hence we obtain 
\begin{align}
	\partial_tE(t)&\lesssim  \|Pu(t)\|_{L^2(M_0;E)}^2 + \ep\|\nabla_\nu u(t)\|^2_{L^2(\pa M_0;E)}\\ &+\ep^{-1}\|\nabla_t u(t)\|_{L^2(\pa M_0;E)}^2+ E(t).
\end{align}
Integrating this estimate over $[0,t]$ gives
\begin{align}
	E(t)&\lesssim E(0)+\|Pu\|_{L^2([0,t]\times M_0;E)}^2+\ep\|\nabla_\nu u\|^2_{L^2([0,t]\times\pa M_0;E)}\\ &+\ep^{-1}\|\nabla_t u\|_{L^2([0,t]\times\pa M_0;E)}^2  +\int_0^t E(s)\, ds
\end{align}
and an application of Gr\"{o}nwall's inequality gives
\begin{align}\label{eq:energy.1}
	E(t)&\lesssim E(0)+\|Pu\|_{L^2([0,t]\times M_0;E)}^2+\ep\|\nabla_\nu u\|^2_{L^2([0,t]\times\pa M_0;E)}\\&+\ep^{-1}\|\nabla_t u\|_{L^2([0,t]\times\pa M_0;E)}^2.
\end{align}
Next we bound the $\nabla_\nu u$ term in \eqref{eq:energy.1}. Taking $X\in\SC^\infty(M;TM)$ with $X|_\Sigma = \nu$, we have
\begin{align}
	&  \int_{M_0} \ang{\nabla_x^*\nabla_x u(t),\nabla_X u(t)}_E \, dV\\
	&= \int_{M_0} \ang{\nabla_x u(t),\nabla_x \nabla_X u(t)}_{E\otimes T^*M_0}\,dV\\ &-\int_{\pa M_0 }\ang{\nabla_x u(t),\nabla_X u(t) \otimes \nu^*}_{E\otimes T^* M_0}\, dS\\
	&= \frac{1}{2}\int_{M_0} X\|\nabla_x u(t)\|_{E\otimes T^* M}^2\, dV-\frac12 \int_{\pa M_0}\|\nabla_\nu u(t)\|_E^2\, dS+O(E(t))\\
	&= -\frac{1}{2}\int_{M_0} \|\nabla_x u(t)\|_{E\otimes T^* M_0}^2 \mathrm{div}(X)\, dV\\ &+\frac{1}{2}\int_{\pa M_0} \|\nabla_x u(t)\|_{E\otimes T^*M_0}^2 -\|\nabla_\nu u(t)\|_E^2\, dS+O(E(t))\\
	&= \frac{1}{2}(\|u(t)\|_{H^1(\pa M_0;E)}^2-\|\nabla_\nu u(t)\|^2_{L^2(M_0;E)})+O(E(t))\label{eq:energy.boundary.1}
\end{align}
where we have used that $$\|\nabla_x u(t)\|^2_{L^2(\pa M_0;E)}=\|\nabla_\nu u(t)\|^2_{L^2(\pa M_0;E)}+\|u(t)\|^2_{H^1(\pa M_0;E)}.$$
Similarly, we compute 
\begin{align}
	& \int_0^t \int_{M_0} \ang{\nabla_t^2 u(s),\nabla_X u(s)}_Eg^{00}\, dV\, ds\\
	&= \int_0^t\int_{M_0} \pa_t(\ang{\nabla_t u(s),\nabla_X u(s)}_E g^{00})\, dV\, ds \\&- \int_0^t\int_{M_0}\ang{\nabla_t u(s),\nabla_X \nabla_t u(s) }_Eg^{00}\, dV \, ds +O(E(t))\\
	&= \left[\int_{M_0} \ang{\nabla_t u(s),\nabla_X u(s)}_E g^{00}\, dV\right]_{s=0}^{s=t}\\&+\frac{1}{2}\int_0^t \int_{M_0}\|\nabla_t u(s)\|_E^2 \mathrm{div}(X)g^{00}\, dV\, ds\\
	&+\frac12\||g^{00}|^{1/2}\nabla_t u\|^2_{L^2([0,t]\times \pa M_0;E)}+\int_0^t O(E(s)) \, ds\\
	&= \left[\int_{M_0} \ang{\nabla_t u(s),\nabla_X u(s)}_E g^{00}\, dV\right]_{s=0}^{s=t}\\&+\frac12\||g^{00}|^{1/2}\nabla_t u(s)\|^2_{L^2([0,t]\times \pa M_0;E)}+\int_0^t O(E(s)) \, ds\label{eq:energy.boundary.2} .
\end{align}
We now integrate \eqref{eq:energy.boundary.1} over $s\in[0,t]$ and subtract \eqref{eq:energy.boundary.2}. Once more using that $P=-g^{00}\nabla_t^2+\nabla_x^*\nabla_x$ up to lower order terms, we obtain
\begin{align}
	& \int_0^t\int_{M_0} \ang{Pu(s),\nabla_X u(s)}_E \, dV\, ds\\
	&=-\left[\int_{M_0} \ang{\nabla_t u(s),\nabla_X u(s)}_E g^{00}\, dV\right]_{s=0}^{s=t}+\frac{1}{2}(\|u\|^2_{L^2([0,t],H^1(\pa M_0;E))}\\
	&-\|\nabla_\nu u\|^2_{L^2([0,t]\times \pa M_0;E)}-\||g^{00}|^{1/2}\nabla_t u\|_{L^2([0,t]\times \pa M_0;E )}^2+O(E(s)))
\end{align}
which gives 
\begin{align}
	& \|\nabla_\nu u\|^2_{L^2([0,t]\times \pa M_0;E)}+\||g^{00}|^{1/2}\nabla_t u\|_{L^2([0,t]\times \pa M_0;E)}^2\\
	&\lesssim \|Pu\|^2_{L^2([0,t]\times M_0;E)}+\|u\|^2_{L^2([0,t] ,H^1(\pa M_0;E))}\\ &+\int_0^t E(s)\, ds + E(t)+E(0). \label{eq:energy.2}
\end{align}
We now combine \eqref{eq:energy.1} and \eqref{eq:energy.2} to obtain
\begin{align}
	& E(t)\\
	&\leq E(0)+\|Pu\|^2_{L^2([0,t]\times M_0;E)}+ \ep^{-1}\|\nabla_t u\|_{L^2([0,t]\times \pa M_0;E)}^2 \\
	&+ \ep(\|Pu\|^2_{L^2([0,t]\times M_0;E)}+\|u\|^2_{L^2([0,t],H^1(\pa M_0;E))}\\&+\int_0^t E(s)\, ds + E(t)+E(0)).
\end{align}
Taking $\ep > 0$ small and absorbing the $\ep E(t)$ into the left-hand side, an application of Gr\"onwall's inequality then gives 
\begin{align}
	\label{eq:energy.0}
	E(t)&\lesssim \|Pu\|^2_{L^2([0,t]\times M_0;E)}+ \|\nabla_t u\|_{L^2([0,t]\times \pa M_0;E)}^2\\&+\|u\|^2_{L^2([0,t],H^1(\pa M_0;E))}+E(0).
\end{align}
Inserting this estimate back into \eqref{eq:energy.2} gives 
\begin{align}
	\|\nabla_\nu u\|^2_{L^2([0,t]\times \pa M_0;E)}&\lesssim \|Pu\|_{L^2([0,t]\times M_0;E)}^2+\|\nabla_t u\|^2_{L^2([0,t]\times\pa M_0;E)}\\&+\|u\|^2_{L^2([0,t],H^1(\pa M_0;E))}+E(0).
\end{align}
Furthermore, for arbitrary $X\in\SC^\infty(M;TM)$, we have the analogous estimate 
\begin{align}
	\label{eq:energy.deriv.0}
	\|\nabla_X u\|^2_{L^2([0,t]\times \pa M_0)}&\lesssim \|Pu\|_{L^2([0,t]\times M_0;E)}^2+\|\nabla_t u\|^2_{L^2([0,t]\times\pa M_0;E)}\\&+\|u\|^2_{L^2([0,t],H^1(\pa M_0;E))}+E(0)
\end{align}
as any tangential derivative of $u$ on $[0,t]\times \pa M_0$ is trivially controlled by the right-hand side.
\end{proof}
Next, we inductively extend the energy estimates \eqref{eq:energy.0},\eqref{eq:energy.deriv.0} to higher order Sobolev scales. To this end, we introduce the following notation.
\begin{equation}
	E_k(t)=E_k[u](t)=\|\nabla_t u\|^2_{H^k(M_0;E)}+\|u\|^2_{H^{k+1}(M_0;E)}
\end{equation}
\begin{equation}
	R_k(t)=R_k[f](t)=\sum_{j=0}^k \|\nabla_t^j f\|^2_{L^2([0,t],H^{k-j}(M_0;E))}
\end{equation}
\begin{equation}
	B_k(t)=B_k[u](t)=\sum_{j=0}^{k+1} \|\nabla_t^j u\|^2_{L^2([0,t],H^{k+1-j}(\pa M_0;E))}.
\end{equation}
The higher order energy estimate is as follows.
\begin{prop}
	If $u\in\SC^\infty(M;E)$ satisfies $Pu=F$, and $X\in\mathrm{Diff}^{k+1}(M;E)$ is arbitrary, then we have
	\begin{align}\label{eq:energy.main}
		E_k(t)+\|Xu\|^2_{L^2([0,t]\times \pa M_0;E)}\lesssim \int_0^t R_k[F](s)+B_k(s)\, ds + E_k(0).
	\end{align}
\end{prop}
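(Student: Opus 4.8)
The plan is to prove \eqref{eq:energy.main} by induction on $k$, the case $k=0$ being Proposition \ref{prop:energy.base}. The strategy is the classical one for hyperbolic boundary value problems (compare \cite[Ch.~24]{Hormander3} and \cite{llt}): one applies the energy method not to $u$ itself but to derivatives of $u$ taken along directions tangent to $\pa M=[0,T]\times\pa M_0$ — so that the boundary contributions produced are again tangential derivatives of $u$ and hence sit inside $B_k$ — commutes these through $P$, and closes the resulting differential inequality by Gr\"onwall; the spatial derivatives normal to $\pa M_0$ are then recovered from the $\nabla_x u$ part of the energy, with the help of the equation $Pu=F$ and the expansion $P=-g^{00}\nabla_t^2+\nabla_x^*\nabla_x$ modulo $\mathrm{Diff}^1(M;E)$ coming from \eqref{eq:tensor.laplacian} and \eqref{eq:metricform}. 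I would localise with a finite atlas and a subordinate partition of unity: on interior patches there are no boundary terms and the estimate follows exactly as on a closed manifold, while the commutators with the cutoff functions are of order $\le 1$ in $u$ and hence bounded by $\int_0^t E_k(s)\,ds$, to be absorbed at the end; only the patches meeting $\pa M$ require new work, and there I use boundary normal coordinates $(x_1,x')$ for the slices ($x_1$ the $g_0$-distance to $\pa M_0$, $t=x_0$) and a local trivialisation of $E$ by an orthonormal frame, in which $P=-g^{00}\nabla_t^2+g^{11}\nabla_{x_1}^2+R+\mathrm{Diff}^1(M;E)$ with $R$ involving only $\nabla_t$ and the tangential fields $\nabla_{x'}$, and $g^{00},g^{11}$ nowhere zero.

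For the inductive step, fix a product $D$ of $\nabla_t$ and the tangential fields $\nabla_{x'}$ with $\mathrm{ord}(D)\le k$ and set $v=Du$, so that $Pv=DF+[P,D]u$. Here $DF$ is a $\le k$-th order operator applied to $F$ whose $L^2([0,t]\times M_0;E)$ norm lies in $R_k[F](t)$, while $[P,D]$ has order $\le k+1$: its contributions of order $\le k$ are handled by the inductive hypothesis, and in its order-$(k+1)$ part the only terms carrying two time derivatives come from $[-g^{00}\nabla_t^2,D]=-[g^{00},D]\nabla_t^2$, i.e. a spatial operator of order $\le k-1$ applied to $\nabla_t^2 u$, which I replace by $(g^{00})^{-1}(\nabla_x^*\nabla_x u+\mathrm{Diff}^1\,u-F)$ using $Pu=F$. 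One is then left with a spatial operator of order $\le k+1$ applied to $u$ and a $\le(k-1)$-th order operator applied to $F$, so that
\[
\|[P,D]u\|_{L^2([0,t]\times M_0;E)}^2\ \lesssim\ \int_0^t E_k(s)\,ds+R_k[F](t)+(\text{terms of order}\le k).
\]
Feeding $v=Du$ into Proposition \ref{prop:energy.base} now bounds a quantity dominating $E_0[Du](t)$ by $E_k(0)$ — at $t=0$ only $\le 1$ time derivative of $u$ occurs, after re-expressing higher time derivatives of the data via the equation — plus $R_k[F](t)$, plus boundary terms $\|\nabla_t Du\|_{L^2([0,t]\times\pa M_0)}^2+\|Du\|_{L^2([0,t];H^1(\pa M_0))}^2$, which are sums of $\|\nabla_t^j u\|_{L^2([0,t];H^{k+1-j}(\pa M_0))}^2$ and thus lie in $B_k(t)$. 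Summing over all such $D$ gives a closed inequality of the form $\Phi(t)\lesssim \int_0^t R_k[F](s)\,ds+\int_0^t B_k(s)\,ds+E_k(0)+\int_0^t\Phi(s)\,ds$, with $\Phi(t)$ dominating the part of $E_k(t)$ built from $\nabla_t$ and tangential derivatives; Gr\"onwall then controls that part.

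It remains to feed in the normal derivatives and the boundary trace term. Each application of Proposition \ref{prop:energy.base} above also supplies, through its $\|\nabla_x v\|_{L^2(M_0)}^2$ term and its second inequality with $X=\nabla_{x_1}$, the quantity obtained from $D$ by adjoining one factor $\nabla_{x_1}$, both in the interior and on $\pa M$; running the argument with $D$ replaced successively by $\nabla_{x_1}^c D$, $c=0,1,\dots,k$, every boundary quantity appearing at stage $c$ has been produced as an output at stage $c-1$, and the interior outputs at stage $k$ recover all spatial derivatives of $u$ of order $\le k+1$, giving the full $E_k(t)$; equivalently one may solve $Pu=F$ for $\nabla_{x_1}^2 u$ in terms of $\nabla_t^2 u$, the tangential second-order operator $R u$, $\mathrm{Diff}^1\,u$ and $F$, and iterate on the number of normal factors, using elliptic regularity on the slices $\{t\}\times M_0$ if desired. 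For arbitrary $X\in\mathrm{Diff}^{k+1}(M;E)$ one writes $X|_{\pa M}$ as a combination of $\nabla_\nu^a\nabla_{\mathrm{tan}}^b$ with $a+b\le k+1$; the $a=0$ part lies directly in $B_k(t)$, and the $a\ge 1$ parts are furnished by the same iteration, applying the boundary inequality of Proposition \ref{prop:energy.base} to $\nabla_\nu^{a-1}(\text{tangential})u$ with $X=\nabla_\nu$. The main obstacle is precisely this bookkeeping: one must verify that in the double induction (outer in $k$, inner in the number of normal factors) every error term is either a multiple of $\int_0^t E_k(s)\,ds$, a part of $R_k[F](t)+\int_0^tB_k(s)\,ds+E_k(0)$, or of strictly lower differential order — with no stray pointwise-in-time appearance of $F$ and no order-$(k+1)$ normal derivative of $u$ leaking onto the right-hand side — and the substitution of $\nabla_t^2 u$ via the equation, together with the choice to differentiate $u$ by purely tangential operators when invoking Proposition \ref{prop:energy.base}, are exactly what keep everything within the budget of \eqref{eq:energy.main}.
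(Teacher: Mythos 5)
Your proposal is sound and rests on the same underlying method as the paper --- commute derivatives through $P$, use $Pu=F$ together with $P=-g^{00}\nabla_t^2+\nabla_x^*\nabla_x$ modulo $\mathrm{Diff}^1(M;E)$ to trade repeated time (or normal) derivatives for quantities the energy controls, keep the boundary contributions tangential so that they land in $B_k$, and close with Gr\"onwall --- but the induction is organised differently. The paper never localises or introduces boundary normal coordinates: in the inductive step it applies the order-$k$ estimate to $\nabla_Y u$ for a \emph{single, arbitrary} vector field $Y$, obtaining \eqref{eq:nowshown}; the boundary quantity $B_k[\nabla_Y u]$ is controlled first for $Y$ tangential to $[0,t]\times\pa M_0$, the resulting trace bound \eqref{eq:nowshown2} for tangential operators of order $k+1$ is then fed back in to control $B_k[\nabla_Y u]$ for general $Y$, and finally $E_{k+1}(t)$ is assembled from $E_k[\nabla_{Y_j}u](t)$ for a finite family $Y_1,\dots,Y_l$ spanning $TM_0$. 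This replaces your inner induction on the number of $\nabla_{x_1}$ factors, and it makes the trace estimate for a general $X\in\mathrm{Diff}^{k+2}(M;E)$ fall out of \eqref{eq:nowshown2} directly rather than through a normal/tangential decomposition of $X|_{\pa M}$; your version is equally workable but carries the heavier bookkeeping you yourself flag as the main obstacle. One small inaccuracy in your sketch: in $[P,D]$ it is not only $[-g^{00}\nabla_t^2,D]$ that produces multiple time derivatives --- if $D$ contains $\nabla_t^j$ then $[\nabla_x^*\nabla_x,D]$ contributes order-$(k+1)$ terms carrying up to $j-1$ time derivatives --- but these are disposed of by the same device, namely iterating the substitution $\nabla_t^2u=Q_0F+Q_1\nabla_tu+Q_2u$ (the paper's \eqref{eq:energy.order.red}) until at most one time derivative remains.
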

\begin{proof}
	The $k=0$ case of this result is directly implied by Proposition \ref{prop:energy.base}. We proceed by induction, assuming the result is known up to order $k$. Throughout the proof, the notation $Q_j$ shall denote an arbitrary ($t$-dependent) element of $\mathrm{Diff}^j(M_0;E)$, possibly varying from line to line. 
	Let $X\in \mathrm{Diff}^{k+1}(M;TM)$, and let $Y\in\mathcal{C}^\infty(M;TM)$.
	Then an application of the order $k$ version of \eqref{eq:energy.main} with $u$ replaced by $\nabla_Y u$ gives
	\begin{align}
		E_k[\nabla_Yu](t)+\|X \nabla_Y u\|^2_{L^2(\Sigma;E)}&\lesssim \sum_{j=0}^k \|\nabla_t^j(P\nabla_Y u)\|^2_{L^2([0,t],H^{k-j}(M_0;E))}\\
		&+ \sum_{j=0}^{k+1}\|\nabla_t^j\nabla_Y u\|^2_{L^2([0,t],H^{k+1-j}(\pa M_0;E))}\\
		&+ E_k[\nabla_Y u](0).
	\end{align}
	In the first right-hand side term, we have $\nabla_t^j P \nabla_Y u=\nabla_Y\nabla_t^j P u+[\nabla_t^j P,\nabla_Y]u$. We can immediately bound 
	\begin{align}
		\label{eq:energy.induction.1}
		\|\nabla_Y\nabla_t^j Pu\|_{L^2([0,1],H^{k-j}(M_0;E))}&\lesssim \|\nabla_t^j F\|_{L^2([0,1],H^{k+1-j}(M_0;E))}\\&+\|\nabla_t^{j+1}F\|_{L^2([0,t],H^{k-j}(M_0;E))}.
	\end{align}
	On the other hand, $[\nabla_t^j P,\nabla_Y]\in \mathrm{Diff}^{j+2}(M;E)$. We can always write an element of $ \mathrm{Diff}^{j+2}(M)$ in the form $Q_0\nabla_t^{j+2}+\ldots+Q_{j+1}\nabla_t+Q_{j+2}$, and so we also have 
	\begin{equation}\label{eq:energy.order.red}
		\nabla_t^2 u =Q_0F + Q_1 \nabla_t u + Q_2 u
	\end{equation}
	using $Pu=F$.
	We can repeatedly use \eqref{eq:energy.order.red} to reduce powers of $\nabla_t$ acting upon $u$, until we arrive at 
	\begin{align}
		\label{eq:energy.order.est}
		\|[\nabla_t^j P,\nabla_Y] u\|_{L^2([0,t],H^{k-j}(M_0;E))}&\lesssim \sum_{m=0}^j \|\nabla_t^m F\|_{L^2([0,t],H^{k-m}(M_0;E))}\\
		&+\|u\|_{L^2([0,t]\times H^{k+2}(M_0;E))}\\
		&+\|\nabla_t u\|_{L^2([0,t],H^{k+1}(M_0;E))}.
	\end{align}
	Upon summing \eqref{eq:energy.induction.1} and \eqref{eq:energy.order.est} from $j=0$ to $k$ we arrive at
	\begin{equation}
		\|\nabla_t^j(P\nabla_Y u)\|^2_{L^2([0,t],H^{k-j}(M_0;E))}\lesssim \int_0^t R_{k+1}[F](s)+E_{k+1}(s)\, ds.
	\end{equation}
	Next, we bound $E_k[\nabla_Y u](0)$. We have the immediate estimate
	\begin{equation}
		\|\nabla_Y u(0)\|^2_{H^{k+1}(M_0;E)}\lesssim \|u(0)\|^2_{H^{k+2}(M_0;E)}+\|\nabla_t u(0)\|^2_{H^{k+1}(M_0;E)}
	\end{equation}
	and since $\nabla_t\nabla_Y$ is second order, we can use \eqref{eq:energy.order.red} to write
	\begin{align}
		\|\nabla_t \nabla_Y u(0)\|^2_{H^k(M_0;E)}&=\|(Q_0\nabla_t^2+Q_1\nabla_t+Q_2)u(0)\|^2_{H^k(M_0;E)}\\
		&=\|Q_0F(0)+(Q_1\nabla_t+Q_2)u(0)\|^2_{H^{k}(M_0;E)}\\
		&\leq \|F(0)\|^2_{H^k(M_0;E)}+ E_{k+1}[u](0).
	\end{align}
	We have now shown that 
	\begin{align}
		\label{eq:nowshown}
		& E_k[\nabla_Y u](t)+ \|X\nabla_Y u\|^2_{L^2([0,t]\times \pa M_0;E)}\\
		&\lesssim \int_0^s R_{k+1}(s)+B_k[\nabla_Y u](s)+E_{k+1}(s)\, ds +E_{k+1}(0).
	\end{align}
	For $Y$ tangential to $[0,t]\times \pa M_0$, we have $B_k[\nabla_Y u]\lesssim B_{k+1}[u]$, so taking $Z_1,\ldots,Z_{k+1}$ tangential to $[0,t]\times \pa M_0$, an application of \eqref{eq:nowshown} yields
	\begin{align}
		\|\nabla_{Z_1}\ldots \nabla_{Z_k}\nabla_Y \nabla_{Z_{k+1}}u\|^2_{L^2([0,t]\times \pa M_0;E)}&\lesssim \int_0^t R_{k+1}(s)+B_{k+1}(s)+E_{k+1}(s)\, ds \\&+E_{k+1}(0)
	\end{align}
	whether or not $Y$ is tangential. As the commutator $[\nabla_{Z_{k+1}},\nabla_Y]$ lies in  $\mathrm{Diff}^1(M;E)$, our inductive hypothesis implies 
	\begin{equation}
		\label{eq:nowshown2}
		\|Z\nabla_Y u\|^2_{L^2([0,t]\times \pa M_0;E)}\lesssim \int_0^t R_{k+1}(s)+B_{k+1}(s)+E_{k+1}(s)\, ds +E_{k+1}(0)
	\end{equation}
	for any tangential $Z\in\mathrm{Diff}^{k+1}(M;E)$. In particular, it follows that 
	\begin{equation}
		\int_0^t B_k[\nabla_Y u](s)\, ds\lesssim \int_0^t R_{k+1}(s)+B_{k+1}(s)+E_{k+1}(s)\, ds +E_{k+1}(0)
	\end{equation}
	and so \eqref{eq:nowshown2} holds without the assumption that $Y$ is tangential. Inserting this estimate into \eqref{eq:nowshown} yields 
	\begin{equation}
		\label{eq:nowshown3}
		\|X u\|^2_{L^2([0,t]\times \pa M_0;E)}\lesssim \int_0^t R_{k+1}(s)+B_k(s)+E_{k+1}(s)\, ds +E_{k+1}(0)
	\end{equation}
	for any $X\in \mathrm{Diff}^{k+2}(M;E)$.
	It remains to bound $E_{k+1}(t)$. Let $Y_1,\ldots Y_l\in \SC^\infty(M_0;TM_0)$ span $T_pM_0$ at every point $p\in M_0$, so in particular $$\|u(t)\|_{H^{m+1}(M_0;E)}\lesssim \sum_{j=1}^l\|\nabla_{Y_j}u(t)\|_{H^m(M_0;E)}$$ for any $m\in\mathbb{N}$.
	We can then bound 
	\begin{equation}
		\|u(t)\|_{H^{k+2}(M_0;E)}\lesssim \sum_{j=1}^l \|\nabla_{Y_j}u(t)\|_{H^{k+1}(M_0;E)}\lesssim \sum_{j=1}^l E_k[\nabla_{Y_j}u](t).
	\end{equation}
	and 
	\begin{align}
		\|\nabla_t u(t)\|_{H^{k+1}(M_0;E)}&\lesssim \sum_{j=1}^l \|\nabla_{Y_j}\nabla_t u \|_{H^k(M_0;E)}\\
		&\lesssim \sum_{j=1}^l \|\nabla_t\nabla_{Y_j} u(t) \|_{H^k(M_0;E)}\\&+\|[\nabla_{Y_j},\nabla_t]u(t)\|_{H^k(M_0;E)}\\
		&\lesssim \sum_{j=1}^l E_k[\nabla_{Y_j}u](t)+E_k(t)
	\end{align}
	as the commutator is first order and hence of the form $Q_0+Q_1D_t$.
	Hence it follows from \eqref{eq:nowshown} and \eqref{eq:nowshown3} that
	\begin{align}
		E_{k+1}(t)+\|Xu\|^2_{L^2([0,t]\times \pa M_0;E)}&\lesssim \int_0^t R_{k+1}[F](s)+B_{k+1}(s)+E_{k+1}(s)\, ds \\&+ E_{k+1}(0).
	\end{align}
	An application of Gr\"{o}nwall's inequality then completes the proof.
\end{proof}

\bibliography{all}{}

@article{ralston.gaussian,
  title={Gaussian beams and the propagation of singularities},
  author={Ralston, James},
  journal={Studies in partial differential equations},
  volume={23},
  number={206},
  pages={C248},
  year={1982}
}

@article{zeroth.order.wave,
  title={Recovery of zeroth order coefficients in non-linear wave equations},
  author={Feizmohammadi, Ali and Oksanen, Lauri},
  journal={Journal of the Institute of Mathematics of Jussieu},
  volume={21},
  number={2},
  pages={367--393},
  year={2022},
  publisher={Cambridge University Press}
}

@article{llt,
  title={Non homogeneous boundary value problems for second order hyperbolic operators},
  author={Lasiecka, Irena and Lions, J-L and Triggiani, Roberto},
  journal={Journal de Math{\'e}matiques pures et Appliqu{\'e}es},
  volume={65},
  number={2},
  pages={149--192},
  year={1986}
}

@inproceedings{albin2013inverse,
  title={Inverse boundary problems for systems in two dimensions},
  author={Albin, Pierre and Guillarmou, Colin and Tzou, Leo and Uhlmann, Gunther},
  booktitle={Annales Henri Poincar{\'e}},
  volume={14},
  number={6},
  pages={1551--1571},
  year={2013},
  organization={Springer}
}

@article{dos2009limiting,
  title={Limiting {C}arleman weights and anisotropic inverse problems},
  author={Dos Santos Ferreira, David and Kenig, Carlos E and Salo, Mikko and Uhlmann, Gunther},
  journal={Inventiones mathematicae},
  volume={178},
  number={1},
  pages={119--171},
  year={2009},
  publisher={Springer}
}

@article{cekic2017calderon,
  title={Calder{\'o}n problem for connections},
  author={Ceki{\'c}, Mihajlo},
  journal={Communications in Partial Differential Equations},
  volume={42},
  number={11},
  pages={1781--1836},
  year={2017},
  publisher={Taylor \& Francis}
}

@article{cekic2020calderon,
  title={Calder{\'o}n problem for {Y}ang--{M}ills connections},
  author={Ceki{\'c}, Mihajlo},
  journal={Journal of Spectral Theory},
  volume={10},
  number={2},
  pages={463--513},
  year={2020}
}

@article{gabdurakhmanov2025calderon,
  title={On {C}alderon's problem for the connection {L}aplacian},
  author={Gabdurakhmanov, Ravil and Kokarev, Gerasim},
  journal={Proceedings of the Royal Society of Edinburgh Section A: Mathematics},
  volume={155},
  number={3},
  pages={1087--1112},
  year={2025},
  publisher={Royal Society of Edinburgh Scotland Foundation}
}

@inproceedings{eskin2005inverse,
  title={Inverse problems for {S}chr{\"o}dinger equations with {Y}ang--{M}ills potentials in domains with obstacles and the Aharonov--Bohm effect},
  author={Eskin, Gregory},
  booktitle={Journal of Physics: Conference Series},
  volume={12},
  number={1},
  pages={23},
  year={2005},
  organization={IOP Publishing}
}

@article{eskin2008inverse,
  title={Inverse problems for the {S}chr{\"o}dinger equations with time-dependent electromagnetic potentials and the {A}haronov--{B}ohm effect},
  author={Eskin, Gregory},
  journal={Journal of mathematical physics},
  volume={49},
  number={2},
  year={2008},
  publisher={AIP Publishing}
}

@article{kurylev2018inverse,
  title={Inverse problems for the connection {L}aplacian},
  author={Kurylev, Yaroslav and Oksanen, Lauri and Paternain, Gabriel P},
  journal={Journal of Differential Geometry},
  volume={110},
  number={3},
  pages={457--494},
  year={2018},
  publisher={Lehigh University}
}

@article{eskin2001global,
  title={Global Uniqueness in the Inverse Scattering Problem for the {S}chr{\"o}dinger Operator with External {Y}ang--{M}ills Potentials},
  author={Eskin, Gregory},
  journal={Communications in Mathematical Physics},
  volume={222},
  number={3},
  pages={503--531},
  year={2001},
  publisher={Springer}
}

@article{BLR,
  title={Sharp sufficient conditions for the observation, control, and stabilization of waves from the boundary},
  author={Bardos, Claude and Lebeau, Gilles and Rauch, Jeffrey},
  journal={SIAM journal on control and optimization},
  volume={30},
  number={5},
  pages={1024--1065},
  year={1992},
  publisher={SIAM}
}

@article {melrose.gliding,
    AUTHOR = {Andersson, K. G. and Melrose, R. B.},
     TITLE = {The propagation of singularities along gliding rays},
   JOURNAL = {Invent. Math.},
  FJOURNAL = {Inventiones Mathematicae},
    VOLUME = {41},
      YEAR = {1977},
    NUMBER = {3},
     PAGES = {197--232},
      ISSN = {0020-9910,1432-1297},
   MRCLASS = {58G15 (35S15)},
  MRNUMBER = {494322},
MRREVIEWER = {Bent\ E.\ Petersen},
       DOI = {10.1007/BF01403048},
       URL = {https://doi.org/10.1007/BF01403048},
}

@article {melrose.sjostrand,
    AUTHOR = {Melrose, R. B. and Sj\"ostrand, J.},
     TITLE = {Singularities of boundary value problems. {I}},
   JOURNAL = {Comm. Pure Appl. Math.},
  FJOURNAL = {Communications on Pure and Applied Mathematics},
    VOLUME = {31},
      YEAR = {1978},
    NUMBER = {5},
     PAGES = {593--617},
      ISSN = {0010-3640,1097-0312},
   MRCLASS = {35G15 (58G15)},
  MRNUMBER = {492794},
MRREVIEWER = {Makhlouf\ Derridj},
       DOI = {10.1002/cpa.3160310504},
       URL = {https://doi.org/10.1002/cpa.3160310504},
}

@article {taylor.reflection,
    AUTHOR = {Taylor, Michael E.},
     TITLE = {Reflection of singularities of solutions to systems of
              differential equations},
   JOURNAL = {Comm. Pure Appl. Math.},
  FJOURNAL = {Communications on Pure and Applied Mathematics},
    VOLUME = {28},
      YEAR = {1975},
    NUMBER = {4},
     PAGES = {457--478},
      ISSN = {0010-3640,1097-0312},
   MRCLASS = {35F05 (35S05)},
  MRNUMBER = {509098},
MRREVIEWER = {D.\ Ciumasu},
       DOI = {10.1002/cpa.3160280403},
       URL = {https://doi.org/10.1002/cpa.3160280403},
}

@book {kobayashi,
    AUTHOR = {Kobayashi, Shoshichi and Nomizu, Katsumi},
     TITLE = {Foundations of differential geometry. {V}ol. {I}},
    SERIES = {Wiley Classics Library},
      NOTE = {Reprint of the 1963 original,
              A Wiley-Interscience Publication},
 PUBLISHER = {John Wiley \& Sons, Inc., New York},
      YEAR = {1996},
     PAGES = {xii+329},
      ISBN = {0-471-15733-3},
   MRCLASS = {53-01},
  MRNUMBER = {1393940},
}

@article {LO1,
	AUTHOR = {Alexakis, Spyros and Feizmohammadi, Ali and Oksanen, Lauri},
	TITLE = {Lorentzian {C}alder\'{o}n problem under curvature bounds},
	JOURNAL = {Invent. Math.},
	FJOURNAL = {Inventiones Mathematicae},
	VOLUME = {229},
	YEAR = {2022},
	NUMBER = {1},
	PAGES = {87--138},
	ISSN = {0020-9910},
	MRCLASS = {35L05 (53C50 58J32)},
	MRNUMBER = {4438353},
	DOI = {10.1007/s00222-022-01100-5},
	URL = {https://doi.org/10.1007/s00222-022-01100-5},
}

@misc{LO2,
	title={Lorentzian {C}alder\'{o}n problem near the {M}inkowski geometry}, 
	author={Spyros Alexakis and Ali Feizmohammadi and Lauri Oksanen},
	year={2021},
	eprint={2112.01663},
	archivePrefix={arXiv},
	primaryClass={math.AP}
}

@preamble{
   "\def\cprime{$'$} "
}

@book {MR82i:35172,
    AUTHOR = {M.E. Taylor},
     TITLE = {Pseudodifferential operators},
 PUBLISHER = {Princeton University Press},
   ADDRESS = {Princeton, N.J.},
      YEAR = {1981},
     PAGES = {xi+452},
      ISBN = {0-691-08282-0},
   MRCLASS = {35Sxx (35-02 47G05 58G15 78A05)},
  MRNUMBER = {82i:35172},
MRREVIEWER = {Vesselin M. Petkov},
}

@preamble{
   "\def\polhk#1{\setbox0=\hbox{#1}{\ooalign{\hidewidth
    \lower1.5ex\hbox{`}\hidewidth\crcr\unhbox0}}} "
}

@article{Duistermaat-Hormander1,
   author = "J.J. Duistermaat and L. H{\"o}rmander",
   journal = "Acta Math.",
   pages = "183-269",
   title = "Fourier integral operators, {II}",
   volume = "128",
   year = "1972",
}

@inproceedings {Gu,
    AUTHOR = {Guillemin, V.},
     TITLE = {Sojourn times and asymptotic properties of the scattering
              matrix},
 BOOKTITLE = {Proceedings of the Oji Seminar on Algebraic Analysis and the
              RIMS Symposium on Algebraic Analysis (Kyoto Univ., Kyoto,
              1976)},
   JOURNAL = {Publ. Res. Inst. Math. Sci.},
    VOLUME = {12},
      YEAR = {1976/77 supplement},
     PAGES = {69--88},
   MRCLASS = {58G15 (35P25)},
  MRNUMBER = {56 \#6759},
MRREVIEWER = {Jeffrey Rauch},
}

@book{Hormander3,
   author = "L. H{\"o}rmander",
   address = "Berlin{,} Heidelberg{,} New York{,} Tokyo",
   comment = "This is the first edition",
   publisher = "Springer-Verlag",
   title = "The Analysis of Linear Partial Differential Operators",
   volume = "3",
   year = "1985"
}

@book{Hormander7,
   author = "L. H{\"o}rmander",
   address = "Berlin{,} Heidelberg{,} New York{,} Tokyo",
   comment = "This is the first edition",
   publisher = "Springer-Verlag",
   title = "The Analysis of Linear Partial Differential Operators",
   volume = "4",
   year = "1985"
}

@article{Taylor1,
   author = "M.E. Taylor",
   title = "Grazing rays and reflection of singularities to wave equations",
   year = "1978",
   pages = "1-38",
   volume = "29",
   journal = "Comm. Pure Appl. Math."
}

@unpublished{M,
   author = "Richard B. Melrose",
   title = "Differential analysis on manifolds with corners",
   note = "In preparation, available at http://www-math.mit.edu/$\sim$rbm/book.html"
}

@article{stein.interp,
  title={Interpolation of linear operators},
  author={Stein, Elias M},
  journal={Transactions of the American Mathematical Society},
  volume={83},
  number={2},
  pages={482--492},
  year={1956},
  publisher={JSTOR}
}

@inproceedings{belishevBC,
  title={An approach to multidimensional inverse problems for the wave equation},
  author={Belishev, Mikhail Igorevich},
  booktitle={Doklady Akademii Nauk},
  volume={297},
  number={3},
  pages={524--527},
  year={1987},
  organization={Russian Academy of Sciences}
}

@inproceedings{calderonoriginal,
  title={On an inverse boundary value problem},
  author={Calder{\'o}n, AP},
  booktitle={in Seminar on Numerical Analysis and its Applications to Continuum Physics (eds. WH Meyer and MA Raupp)},
  pages={65},
  year={1980}
}

@article{spyros.lauri.miika,
  title={Recovery of a matrix valued potential for the wave equation on stationary spacetimes},
  author={Filippas, Spyridon and Oksanen, Lauri and Sarkkinen, Miika},
  journal={arXiv preprint arXiv:2510.13410},
  year={2025}
}

@article{bukhgeim,
  title={Recovering a potential from {C}auchy data in the two-dimensional case.},
  author={Bukhgeim, Alexander L},
  journal={Journal of Inverse \& Ill-Posed Problems},
  volume={16},
  number={1},
  year={2008}
}

@article{nachman,
  title={Global uniqueness for a two-dimensional inverse boundary value problem},
  author={Nachman, Adrian I},
  journal={Annals of Mathematics},
  pages={71--96},
  year={1996},
  publisher={JSTOR}
}

@article{sylvester.uhlmann,
  title={A global uniqueness theorem for an inverse boundary value problem},
  author={Sylvester, John and Uhlmann, Gunther},
  journal={Annals of mathematics},
  pages={153--169},
  year={1987},
  publisher={JSTOR}
}

@article{lee.uhlmann,
  title={Determining anisotropic real-analytic conductivities by boundary measurements},
  author={Lee, John M and Uhlmann, Gunther},
  journal={Communications on Pure and Applied Mathematics},
  volume={42},
  number={8},
  pages={1097--1112},
  year={1989},
  publisher={Wiley Online Library}
}

@article{belishev.kurylev,
  title={To the reconstruction of a {R}iemannian manifold via its spectral data ({BC}--Method)},
  author={Belishev, Michael I and Kurylev, Yaroslav V},
  journal={Communications in partial differential equations},
  volume={17},
  number={5-6},
  pages={767--804},
  year={1992},
  publisher={Taylor \& Francis}
}

@article{eskin1,
  title={Inverse hyperbolic problems with time-dependent coefficients},
  author={Eskin, Gregory},
  journal={Communications in Partial Differential Equations},
  volume={32},
  number={11},
  pages={1737--1758},
  year={2007},
  publisher={Taylor \& Francis}
}

@article{eskin2,
  title={Inverse problems for general second order hyperbolic equations with time-dependent coefficients},
  author={Eskin, Gregory},
  journal={Bulletin of Mathematical Sciences},
  volume={7},
  number={2},
  pages={247--307},
  year={2017},
  publisher={Springer}
}

@article{tataru,
  title={Unique continuation for solutions to pde's; between {H}{\"o}rmander's theorem and {H}olmgren's theorem},
  author={Daniel, Tataru},
  journal={Communications in Partial Differential Equations},
  volume={20},
  number={5-6},
  pages={855--884},
  year={1995},
  publisher={Taylor \& Francis}
}

@article{alinhac,
  title={Non-unicit{\'e} du probl{\`e}me de {C}auchy},
  author={Alinhac, Par S},
  journal={Annals of Mathematics},
  volume={117},
  number={1},
  pages={77--108},
  year={1983},
  publisher={JSTOR}
}

@article{stefanov,
  title={Uniqueness of the multi-dimensional inverse scattering problem for time dependent potentials},
  author={Stefanov, Plamen D},
  journal={Mathematische Zeitschrift},
  volume={201},
  number={4},
  pages={541--559},
  year={1989},
  publisher={Springer-Verlag Berlin/Heidelberg}
}

@article{feizmohammadi,
  title={Recovery of time-dependent coefficients from boundary data for hyperbolic equations},
  author={Feizmohammadi, Ali and Ilmavirta, Joonas and Kian, Yavar and Oksanen, Lauri},
  journal={Journal of Spectral theory},
  volume={11},
  number={3},
  pages={1107--1143},
  year={2021}
}

@book {lax.nirenberg,
    AUTHOR = {Nirenberg, Louis},
     TITLE = {Lectures on linear partial differential equations},
    SERIES = {Conference Board of the Mathematical Sciences Regional
              Conference Series in Mathematics},
    VOLUME = {No. 17},
      NOTE = {Expository Lectures from the CBMS Regional Conference held at
              the Texas Technological University, Lubbock, Tex., May 22--26,
              1972},
 PUBLISHER = {American Mathematical Society, Providence, RI},
      YEAR = {1973},
     PAGES = {v+58},
   MRCLASS = {35-02},
  MRNUMBER = {450755},
MRREVIEWER = {Yu.\ V.\ Egorov},
}
\bibliographystyle{plain}
\end{document}